\newtheorem{theorem}{Theorem}[section]
\newtheorem{lemma}[theorem]{Lemma}
\newtheorem{proposition}[theorem]{Proposition}
\newtheorem{corollary}[theorem]{Corollary}
\newtheorem{claim}[theorem]{Claim}
\theoremstyle{definition}
\newtheorem{definition}[theorem]{Definition}
\newtheorem{example}[theorem]{Example}
\newtheorem{examples}[theorem]{Examples}
\newtheorem{definitions and remarks}[theorem]{Definitions and Remarks}
\newtheorem{question}[theorem]{Question}
\newtheorem{conjecture}[theorem]{Conjecture}
\theoremstyle{remark}
\newtheorem{remark}[theorem]{Remark}
\newtheorem{remarks}[theorem]{Remarks}
\numberwithin{equation}{section}
\newcommand{\exc}{\mathrm{exc}}
\newcommand{\inv}{\mathrm{inv}}
\newcommand{\cosupp}{\mathrm{cosupp}\,}
\newcommand{\ord}{\mathrm{ord}}
\newcommand{\nc}{\mathrm{nc}}
\newcommand{\pp}{\mathrm{pp}}
\newcommand{\cp}{\mathrm{cp}}
\newcommand{\snc}{\mathrm{snc}}
\newcommand{\smooth}{\mathrm{smooth}}
\newcommand{\Aut}{\mathrm{Aut}}
\newcommand{\Frac}{\mathrm{Frac}}
\newcommand{\Spec}{\mathrm{Spec}}
\newcommand{\SL}{\mathrm{SL}}
\newcommand{\SLplus}{\mathrm{SL}_{\mathrm{lex}}^{+}}
\newcommand{\Eold}{E_{\mathrm{old}}}
\newcommand{\Enew}{E_{\mathrm{new}}}
\newcommand{\al}{{\alpha}}
\newcommand{\be}{{\beta}}
\newcommand{\de}{{\delta}}
\newcommand{\ep}{{\varepsilon}}
\newcommand{\De}{{\Delta}}
\newcommand{\ga}{{\gamma}}
\newcommand{\s}{{\sigma}}
\newcommand{\IN}{{\mathbb N}}
\newcommand{\IA}{{\mathbb A}}
\newcommand{\IC}{{\mathbb C}}
\newcommand{\IK}{{\mathbb K}}
\newcommand{\IZ}{{\mathbb Z}}
\newcommand{\cC}{{\mathcal C}}
\newcommand{\cI}{{\mathcal I}}
\newcommand{\cM}{{\mathcal M}}
\newcommand{\cO}{{\mathcal O}}
\newcommand{\cR}{{\mathcal R}}
\newcommand{\tI}{\widetilde{I}}
\newcommand{\ucC}{\underline{\cC}}
\newcommand{\ucM}{\underline{\cM}}
\newcommand{\llb}{{[\![}}
\newcommand{\rrb}{{]\!]}}
\newcommand{\llbr}{{(\!(}}
\newcommand{\rrbr}{{)\!)}}
\newcommand{\RN}[1]{%
  \textup{\uppercase\expandafter{\romannumeral#1}}%
}
\begin{document}
\title[Partial desingularization]{Partial desingularization}
\author[A.~Belotto da Silva]{Andr\'e Belotto da Silva}
\author[E.~Bierstone]{Edward Bierstone}
\author[R.~Ronzon Lavie]{Ramon Ronzon Lavie}
\address[A.~Belotto da Silva]{Universit\'e Paris Cit\'e and Sorbonne Universit\'e, CNRS, IMJ-PRG, F-75013 Paris, France}
\email{andre.belotto@imj-prg.fr}
\address[E.~Bierstone]{University of Toronto, Department of Mathematics, 40 St. George Street, Toronto, ON, Canada M5S 2E4}
\email{bierston@math.utoronto.ca}
\address[R.~Ronzon Lavie]{University of Toronto, Department of Mathematics, 40 St. George Street, Toronto, ON, Canada M5S 2E4}
\email{ramon.ronzon@mail.utoronto.ca}
\thanks{Research supported by Plan d'investissements France 2030, IDEX UP ANR-18-IDEX-0001 (Belotto da Silva), NSERC Discovery Grant RGPIN-2017-06537 (Bierstone) and CONACyT Becas al Extranjero 2018-000009-01EXTF-00250 (Ronzon Lavie)}
\thanks{The authors acknowledge earlier contributions of Pierre Lairez and Franklin Vera Pacheco to the development of ideas in this article}
\date{\today}

\begin{abstract}
We address the following question of \emph{partial desingularization preserving normal crossings}. Given an 
algebraic (or analytic) variety $X$ in characteristic zero, can
we find a finite sequence of blowings-up preserving the normal-crossings locus of $X$, after which the
transform $X'$ of $X$ has only singularities from an explicit finite list of \emph{minimal
singularities}, which we define using the determinants of circulant matrices. In the case of surfaces, for example,
the pinch point or Whitney umbrella is the only singularity needed in addition to normal crossings.

We develop techniques for factorization
(splitting) of a monic polynomial with regular (or analytic) coefficients, satisfying a generic normal crossings
hypothesis, which we use together with resolution of singularities techniques to find local circulant normal
forms of singularities. These techniques in their current state are enough for a positive answer to the question
above, for $\dim X \leq 4$, or in arbitrary dimension if we preserve normal crossings only of order at most three.
In these cases, minimal singularities have smooth normalization.
\end{abstract}

\maketitle

\setcounter{tocdepth}{1}
\tableofcontents

\section{Introduction}\label{sec:intro}
The goal of partial desingularization as described in this article is to understand the nature of the singularities that have to be admitted 
after a sequence of blowings-up $\s: X' \to X$ whose centres are restricted to lie over the complement of the normal crossings locus
$X^\nc$ of an algebraic or analytic variety $X$.

This study is motivated by the following question. Given $X$ (defined over a field $\IK$ of characteristic zero),
can we find a sequence of blowings-up $\s: X' \to X$ such that $\s$ 
preserves the normal crossings locus of $X$, and $X'$ has only normal crossings singularities? Roughly speaking, a variety has 
normal crossings at a point $a$ if it can be defined by a monomial equation 
\begin{equation}\label{eq:nc}
x_1 x_2\cdots x_k = 0
\end{equation}
in local coordinates at $a$.
But the definition of \emph{normal crossings} and the answer to the preceding question depend on the meaning of local coordinates.

\begin{definitions and remarks}\label{def:nc}
We say that an algebraic variety $X$ has \emph{simple normal crossings (snc)} at $a$ if there is an embedding of an
open neighbourhood of $a$ in a smooth variety $Z$, and a system of regular coordinates (or a regular system of parameters)
$(x_1,\ldots,x_n)$ for $Z$ at $a$, with respect to which $X$ is defined by an equation \eqref{eq:nc}. (In this case, we say, more
precisely, that $X$ has \emph{simple normal crossings} $\snc(k)$ of \emph{order} $k$ at $a$.)

Simple normal crossings at $a$ is equivalent to the condition that (the restrictions of $X$ to) all irreducible components 
are smooth (or empty) and transverse at $a$.

We say that an algebraic or analytic variety $X$ has \emph{normal crossings (nc)} at $a$ (or, more precisely, \emph{normal
crossings} $\nc(k)$ of \emph{order} $k$ at $a$) if $X$ is again defined locally by an equation \eqref{eq:nc},
except that here $(x_1,\ldots,x_n)$ is an \'etale
(or local analytic, or formal) coordinate system at $a$ (perhaps after a finite extension of the ground field $\IK$).
In particular, normal crossings is the basic notion in the analytic case.

The \emph{normal crossings locus} $X^\nc$ of $X$ denotes the set of points of $X$ having only normal
crossings singularities. ($X^\nc$ includes all smooth points of $X$.)
\end{definitions and remarks}

\begin{examples}\label{ex:nc}
The nodal curve $y^2 = x^2 + x^3$ has normal crossings but not simple normal crossings at the origin. The curve
$y^2 + x^2 =0$ is nc at $0$, but snc if and only if $\sqrt{-1} \in \IK$. Whitney's umbrella $z^2 - wx^2 = 0$ is nc, but not snc,
at every nonzero point of the $w$-axis $z=x=0$.
\end{examples}

We will take the ground field $\IK$ to be $\IC$ throughout the rest of the article, though 
all results for algebraic varieties hold over any given algebraically closed field 
$\IK$ of characteristic zero.

The answer to the question above is \emph{yes} for snc; see \cite{BDMV}, \cite[Section 12]{BMinv}, \cite[Section 3]{BMmin1}, 
\cite{Kolog}, \cite{Sz}. There are also many interesting variants of the question for snc; for example, \cite{BV1}, \cite{BV2},
\cite{W}. On the other hand, the answer to the question is \emph{no} for nc, in general.

\begin{example}\label{ex:whitney}
The answer is \emph{no} for Whitney's umbrella $X: z^2-wx^2=0$, which has a non-nc
singularity called a \emph{pinch point pp} at $0$. There is no proper birational morphism that eliminates the pinch point
without also modifying nc points, according to the following argument of Koll\'ar \cite{Kolog} (where the question of
Theorem \ref{thm:pp} below and higher-dimensional analogues also was raised). At a nonzero point of the $w$-axis,
$X$ has two local analytic branches. If we go around a small circle about $0$ in the $w$-axis, these branches are interchanged.
This phenomenon continues to hold after any birational morphism that is an isomorphism over the generic point of the $w$-axis.
\end{example}

On the other hand, we have the following result.

\begin{theorem}[\cite{BMmin1}]\label{thm:pp}
For any two-dimensional algebraic (or analytic) variety $X$, there is a morphism $\s: X' \to X$ given by a finite sequence
of smooth blowings-up preserving the normal crossings locus $X^{\nc}$, such that $X'$ has only nc and pp singularities.
\end{theorem}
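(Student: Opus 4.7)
The plan is to work locally at an arbitrary non-nc point $a \in X$, embed $X$ in a smooth 3-fold, and reduce the equation of $X$ to a Weierstrass polynomial in a transverse variable whose splitting can be controlled. Since only blowings-up over the non-nc locus are permitted, every chosen centre must lie over $X \setminus X^\nc$, which constrains the construction throughout.

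First I would pass to an \'etale or analytic neighbourhood of $a$ and embed it in a smooth 3-fold $Z$ with coordinates $(x,y,z)$, taking $z$ generic enough for Weierstrass preparation. This yields $X = \{f = 0\}$ with $f = z^d + a_1(x,y) z^{d-1} + \cdots + a_d(x,y)$; the Tschirnhausen substitution $z \mapsto z - a_1/d$, available since $\car \IK = 0$, removes the subleading coefficient. The projection $\pi : X \to W := \Spec \IC\{x,y\}$ is a $d$-to-$1$ branched cover whose branch locus is the zero set of the discriminant $\Delta \in \cO_W$ of $f$, and $\pi$ is \'etale (hence $X$ is smooth or nc) over $W \setminus \{\Delta = 0\}$. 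Now apply classical embedded resolution of singularities to $\Delta$ as a plane curve in $W$, producing a sequence of point blowings-up of $W$ after which the total transform of $\Delta$ is snc; lift these to smooth blowings-up of $Z$. Since the branch locus lies over $X \setminus X^\nc$, the lifted sequence preserves $X^\nc$, and the Newton polyhedra of the coefficients $a_i(x,y)$ with respect to the resulting snc divisor become combinatorially controlled.

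The technical heart is then to show that, after possibly further blowings-up with smooth centres over $X \setminus X^\nc$, the polynomial $f$ splits as
\begin{equation*}
f \;=\; \prod_i (z - b_i(x,y)) \cdot \prod_j \bigl(z^2 - u_j(x,y)\, v_j(x,y)^2\bigr),
\end{equation*}
where each linear factor defines a smooth sheet, and each quadratic factor defines (after clearing unit factors and absorbing squares into $z$) either an nc or a pp singularity of the Whitney-umbrella form $z^2 = wx^2$. The transverse union of the smooth sheets with the nc/pp factors then has only nc and pp singularities on $X'$, giving the claim.

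The main obstacle is this factorization step. Over the algebraic closure of $\Frac \cO_W$ the polynomial $f$ splits into linear factors, but its roots $b_i$ a priori live on a ramified cover of $W$ and are not regular functions on $W$. The decisive feature of dimension $2$ is that, once $\Delta$ is snc, the local fundamental group of $W$ minus this divisor is free abelian of rank at most $2$, so the monodromy of the roots factors through an abelian subgroup of the symmetric group $S_d$. A combinatorial argument driven by the Newton polyhedra of the $a_i$, together with further blowings-up used to resolve common factors, unit discrepancies and tangencies, then organizes the roots into monodromy orbits of size $1$ (yielding linear factors) and size $2$ (yielding pinch-point quadratic factors). Making this organization algorithmic, and verifying at every step that each new centre lies over $X \setminus X^\nc$, is where the bulk of the work lies; in higher dimension the more elaborate circulant splitting developed in the present paper is needed in place of this order-$2$ monodromy analysis.
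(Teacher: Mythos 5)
Your proposal takes a genuinely different route from the paper (discriminant of a branched cover plus a monodromy analysis, versus the desingularization invariant $\inv$), but it has a genuine gap at its first global step. You assert that the lifted embedded resolution of the discriminant $\De$ preserves $X^{\nc}$ because ``the branch locus lies over $X\setminus X^{\nc}$.'' This is false: since $\pi$ is \'etale exactly over $W\setminus\{\De=0\}$, where $X$ consists of $d$ disjoint smooth sheets, \emph{every} $\nc(k)$ point with $k\geq 2$ lies over $\{\De=0\}$. At an $\nc(3)$ point the discriminant is generically three distinct branches through one point of $W$, hence not snc there, so embedded resolution of $\De$ must blow up that point, and the lifted centre destroys an $\nc(3)$ point of $X^{\nc}$; non-reduced or mutually tangent components of $\De$ under $\nc(2)$ curves cause the same problem. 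The whole difficulty of the theorem is precisely that one cannot resolve the discriminant freely; the paper circumvents this by blowing up maximum strata of $\inv$, which automatically avoid nc points as long as the maximum value of $\inv$ exceeds $\inv(\nc(2))$.

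Two further steps are asserted rather than proved. First, abelian monodromy does not force orbits of size at most $2$ (an abelian subgroup of $S_d$ can act with orbits of any size, e.g.\ a $d$-cycle around one branch, as for $z^3-xy$); ruling out larger orbits requires knowing that the remaining locus is a curve along which $X$ is generically $\nc(2)$ of order $2$, which is exactly what the $\inv$-stratification delivers and your setup does not. Second, a quadratic factor $z^2-u v^2$ with $u=w^k(\mathrm{unit})$ is \emph{not} nc or pp when $k\geq 2$ (for $k$ even the two sheets $z=\pm w^{k/2}x$ are tangent at the origin), and ``absorbing squares into $z$'' cannot fix this; one needs the cleaning blowings-up with centre $\{z=w=0\}$ of the paper, together with a termination argument and a check that these centres, and those needed to separate tangent sheets coming from different factors, lie over the non-nc locus. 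None of this is supplied, so the proposal as written does not close.
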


Whitney's umbrella $X$ has smooth normalization; for example, if we set $w=v^2$, then $z^2-wx^2$ factors as
$(z-vx)(z+vx)$, and the morphism to $X$ from the smooth variety defined by either of the factors is a finite 
birational morphism. See also Proposition \ref{prop:circ}. Normalization plays an important part in classical approaches to resolution
of singularities. In particular, smooth normalization, when it exists, is a relatively simple one-shot method
to resolve singularities.

\begin{conjecture}\label{conj:min}
For any algebraic (or analytic) variety $X$, there is a finite composite of admissible smooth blowings-up 
$\s: X' \to X$ (see Defiinition \ref{def:admiss} following), preserving $X^{\nc}$, such that $X'$ has only 
singularities from an explicit finite list (which we call \emph{minimal singularities}).
\end{conjecture}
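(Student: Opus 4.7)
The plan is to reduce the global conjecture to a local hypersurface normal-form question, solve that using the splitting of monic polynomials under a generic normal crossings hypothesis on the coefficients, and use resolution of singularities to make that hypothesis satisfiable while respecting $X^{\nc}$. The explicit list of minimal singularities will then be the circulant normal forms that arise as the irreducible remainders of the splitting procedure.

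First I would work locally and reduce to a hypersurface: at any point $a \in X \setminus X^{\nc}$, after passing to a smooth ambient $Z \supset X$ and choosing suitable \etale\ coordinates, one can cut $X$ locally by a hypersurface containing it and apply Weierstrass preparation to write a defining equation as
\[
f(z) = z^d + a_1(\bx) z^{d-1} + \cdots + a_d(\bx)
\]
in one distinguished variable $z$, with coefficients regular (or analytic) in the remaining variables $\bx$. The multi-branch structure of $X$ at $a$ is then encoded by the factorization behaviour of $f$ over a suitable finite cover of $\bx$-space, and the Galois action on the branches (already visible in Example \ref{ex:whitney}) controls what irreducible pieces of $f$ cannot split further.

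Next I would invoke the splitting technique announced in the abstract: under a generic normal crossings hypothesis on the discriminant and on the $a_i(\bx)$, the polynomial $f$ decomposes into monic factors whose remaining indecomposable pieces have a cyclic monodromy. Such pieces admit a determinantal description via circulants, yielding precisely the candidate list of minimal singularities, each of which has smooth normalization parametrized by roots of unity (the analogue of the parametrization $z = \pm v x$, $w = v^2$ for Whitney's umbrella; cf.\ Proposition \ref{prop:circ}). To enforce the generic normal crossings hypothesis I would apply the functorial characteristic-zero resolution of the ideals generated by the $a_i(\bx)$ and the discriminant, choosing only centres that lie over $X \setminus X^{\nc}$ and are admissible in the sense of Definition \ref{def:admiss}; the results of \cite{BMmin1} and the snc desingularization machinery of \cite{BMinv,BMfunct} guarantee that such centres exist and that the blowings-up preserve $X^{\nc}$. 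Globalization then follows by the usual functoriality argument and an induction on a resolution invariant.

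The main obstacle, which is why Conjecture \ref{conj:min} is only known in the ranges advertised in the abstract ($\dim X \le 4$, or arbitrary dimension with $\nc$ of order at most three), is the interaction between the splitting algorithm and the resolution process. Each admissible blowing-up forces a re-preparation of $f$, introduces new exceptional components among the $a_i(\bx)$, and can a priori destroy the circulant shape of the indecomposable remainder; one must show that after finitely many such steps the remainder stabilizes into one of the explicit circulant normal forms rather than producing a more complicated obstruction. Controlling the number of exceptional parameters interacting with the coefficients $a_1,\ldots,a_d$ is what limits the proof, and the dimension/order restriction corresponds exactly to the regime in which the circulant list is provably exhaustive. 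The envisaged proof therefore proceeds by a double induction on $\dim X$ and on the resolution invariant of \cite{BMinv}, with the circulant factorization playing the role of the terminal case of the induction.
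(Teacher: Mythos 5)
Your outline tracks the paper's own program closely: reduction to an embedded hypersurface via the Hilbert--Samuel stratification, Weierstrass-type presentation of $f$ along the closure of the $\nc(k)$-stratum, splitting of $f$ over a ramified cover in the exceptional variables, cyclic monodromy yielding circulant normal forms (Theorem \ref{thm:limintro}), and iteration over the strata ordered by the desingularization invariant (Claim \ref{claim:ind}). You also correctly locate the obstruction that keeps the statement a conjecture: the splitting theorem is only established when there is a single exceptional variable and no free parameters $u$ (Theorem \ref{thm:splitintro}), or for $k\le 3$ (Proposition \ref{prop:limnc3}); the general case is exactly Question \ref{conj:opt}. Neither you nor the paper proves the conjecture in full generality, and you both break at the same place.

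There is, however, one substantive gap in your description of the ``explicit finite list.'' You assert that the minimal singularities are ``the circulant normal forms that arise as the irreducible remainders of the splitting procedure.'' That list is not closed under the blowings-up the algorithm is forced to perform: once a (product of) circulant singularity is located, one must blow up to confine it to a distinguished exceptional divisor $D_1$ and then normalize the nearby singularities, and the resulting \emph{neighbours} are not circulant normal forms. They are degenerations in which some coordinate $x_i$ is replaced by $1$ or by an exceptional monomial --- e.g.\ the degenerate pinch point $\De_3(z,w^{1/3}x_1,w^{2/3})$, or the singularity $(2')$ $\De_4(z,w^{1/4}x_1,w^{2/4},w^{3/4}x_2x_3)$ of Examples \ref{ex:min}, which must be admitted as a limit of neighbours and appears only in five or more variables. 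Establishing that the list of neighbours is itself finite and consists of singularities with smooth normalization requires the explicit blow-up computations of Section \ref{sec:alg} (e.g.\ the eight-step sequence for $\cp(4)$ in \S\ref{subsec:cp4}); without closing the list under this ``neighbour'' operation, the finiteness claim in the conjecture is not justified by the splitting and circulant machinery alone. A second, smaller discrepancy: the actual proof of the splitting theorem does not go through resolving the ideals of the coefficients and the discriminant, but through the Soto--Vicente multivariate Newton--Puiseux theorem together with monomial substitutions from $\SLplus(k,\IZ)$, which bound the $w$-denominators of the roots linearly in the $x$-degree so that finitely many point blowings-up clear them; the discriminant argument is used only in the $k\le 3$ case.
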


In the case of an analytic variety, the morphism $\s$
in Conjecture \ref{conj:min} should be understood to mean a morphism over a given relatively compact open subset of $X$.

\begin{definition}\label{def:admiss}
A smooth blowing-up (i.e., a blowing-up with smooth centre $C$) is \emph{admissible} if
\begin{enumerate}
\item locally, there are
regular coordinates with respect to which $C$ is a coordinate subspace and each component of the 
exceptional divisor $E$ is a coordinate hypersurface (in this case, we say that $C$ and $E$ are \emph{snc});
\item the Hilbert-Samuel function $H_{X,x}$ is locally constant (as a function of $x$) on $C$. 
\end{enumerate}
\end{definition}

In the case that $X$ is a hypersurface (see \S\ref{subsec:split} below), condition (2)
is equivalent to the condition that the order $\ord_x X$ is locally constant on $C$. Definition \ref{def:admiss} corresponds
to the properties satisfied by the blowings-up involved in resolution of singularities in characteristic zero \cite{Hiro},
\cite{BMinv}, \cite{BMfunct}. A reader can safely choose not to focus on (2) since we use the Hilbert-Samuel function
only to reduce the general to the hypersurface case (see \S\ref{subsec:approach} and Section 6).

Minimal singularities will be defined using the determinants of circulant matrices, and include the
\emph{circulant singularities} of \S\ref{subsec:circ} and Section \ref{sec:circ}, which are higher-dimensional
versions of the the pinch point.
The following theorems summarize our general results on the conjecture above.

\begin{theorem}\label{thm:dim4}
Conjecture \ref{conj:min} is true for $\dim X \leq 4$.
\end{theorem}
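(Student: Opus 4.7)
My plan is to reduce the statement to a local hypersurface normal-form question and then apply the paper's splitting techniques. First I would argue that it suffices to handle a neighbourhood of a single non-nc point $a \in X \setminus X^{\nc}$, since admissibility is local and the non-nc locus is closed. Using embedded desingularization preserving the nc locus (in the spirit of the snc results \cite{BMinv, BMfunct}), I reduce to the situation where $X$ is locally a hypersurface in a smooth ambient $Z$ of dimension $n+1 \leq 5$, and the accumulated exceptional divisor is snc in coordinates $(x_1,\ldots,x_n,z)$. Weierstrass preparation along a hypersurface of maximal contact then presents the local equation as a monic polynomial
\[
f(z) = z^d + a_1(x_1,\ldots,x_n) z^{d-1} + \cdots + a_d(x_1,\ldots,x_n),
\]
with $d = \ord_a X$ controlled by the Hilbert--Samuel function.

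The second step is to bring $f$ into the range of the splitting algorithm announced in the abstract. I would principalize the coefficient ideals $(a_i)$ inside the snc stratification by admissible blowings-up that preserve $X^{\nc}$, until each $a_i$ becomes a monomial in the $x_j$ up to a unit, with orders compatible with $d$; this realizes the \emph{generic normal crossings hypothesis}. The splitting algorithm then factors $f$, after further admissible blowings-up, into pieces whose irreducible factors are the circulant determinantal equations on the paper's minimal list. Each such factor defines a variety with smooth normalization by Proposition~\ref{prop:circ} (generalizing the substitution $w = v^2$ that normalizes Whitney's umbrella $z^2 - wx^2$), so both Conjecture~\ref{conj:min} and, a fortiori, Conjecture~\ref{conj:main} follow at $a$.

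The main obstacle, and the reason the dimension hypothesis enters, is the termination and admissibility of the splitting when the ambient normal-crossings order is large. The abstract signals that the current techniques suffice in arbitrary dimension only for nc order at most $3$; for $\dim X \leq 4$ one must push the analysis to $\snc(5)$, since a hypersurface in a $5$-dimensional ambient can carry normal crossings of that order. I would handle this by a finite case analysis of the possible monomial configurations of the $a_i$, using the Hilbert--Samuel constancy of Definition~\ref{def:admiss}(2) to bound $d$ in terms of the local snc data, and the desingularization invariants of \cite{BMinv} to select admissible centres that strictly decrease a well-founded measure of complexity. The circulant symmetry of the remaining non-split factors keeps the combinatorics finite in this low-dimensional range, and the functorial character of the invariant then patches the local conclusions into a global sequence $\s: X' \to X$.
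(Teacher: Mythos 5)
Your overall skeleton (reduce to an embedded hypersurface via the Hilbert--Samuel function, present the equation as a monic polynomial in a maximal contact variable, then split) matches the paper's starting point, but the two central steps of your argument do not work as stated. First, "principalizing the coefficient ideals $(a_i)$ until each is a monomial times a unit" is not the mechanism the paper uses and would not suffice: monomializing the $a_i$ requires centres that in general meet the $\nc$ locus, and even if each $a_i$ were monomial the polynomial $f$ need not split (compare Example \ref{ex:basic}, where $f = z^2 + (w^3+x)x^2$ has monomial-free obstructions that are only removed by further blowings-up of the origin). The paper's actual splitting input is Theorem \ref{thm:splitintro}, proved via the Soto--Vicente multivariate Newton--Puiseux theorem to bound denominators of the roots in $\overline{\IC(w)}\llb x\rrb$, together with the Galois argument of Corollary \ref{cor:finitesplit}; and for $\nc(3)$ in higher codimension (needed for the $S_{3,r}$ strata when $\dim X = 4$) the mechanism is entirely different, resting on the discriminant being a square (Lemmas \ref{lemma:disc}, \ref{lemma:Dsq}, \ref{lem:3.3}). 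Moreover, a splitting over $\IC\llb w^{1/p},x\rrb$ does not directly yield the circulant equations: one still needs the cleaning blowings-up of Theorem \ref{thm:lim} to pass from pre-circulant forms $\De_k(z, w^{n_1+1/k}x_1,\ldots)$ to circulant normal form, and to see that these local blowings-up globalize to admissible centres.

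Second, and more seriously, your proposal omits the neighbour analysis, which is where the finite list of minimal singularities is actually produced and where Conjecture \ref{conj:min} is verified. The circulant points cannot be eliminated, so one must control all singularities in a neighbourhood of each circulant product and all limits of such singularities under the subsequent blowings-up; the paper does this by introducing a distinguished divisor $D_1$ and running an explicit eight-step blow-up sequence (\S\ref{subsec:cp4}) whose output includes the genuinely new five-variable singularity $\De_4(z, w^{1/4}x_1, w^{2/4}, w^{3/4}x_2x_3)$, which is not a product of circulant singularities and arises only as a limit of neighbours. Your "finite case analysis of monomial configurations" does not identify these, nor does it address the iterative structure over the strata $S_{p,r}$ (with resetting of the desingularization invariant, and the delicate point in passing from $S_{p,r}$ to $S_{p,r-1}$ that centres may accumulate on the previously constructed $\Sigma_{p,r}$). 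Without these ingredients the argument does not establish either conjecture for $\dim X \leq 4$.
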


\begin{theorem}\label{thm:nc(3)}
Let $X^{\nc(3)}$ denote the set of normal crossings points of $X$ of order at most three. Then the analogue
of Conjecture \ref{conj:min} with $X^{\nc}$ replaced by $X^{\nc(3)}$ is true (in any dimension).
\end{theorem}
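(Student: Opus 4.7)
The plan is to reduce the problem locally to splitting a monic polynomial whose discriminant is supported in a normal crossings divisor of order at most three, and then to invoke the factorization machinery promised in the abstract, which in this bounded-order regime produces only finitely many circulant normal forms.

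First, I would work in a neighbourhood of a point $a \in X \setminus X^{\nc(3)}$. After an embedded resolution argument that touches only points where $X$ fails to be $\nc(3)$, I reduce to the case that $X$ is (locally) a hypersurface in a smooth ambient $Z$, and that the non-$\nc(3)$ locus is contained in a snc divisor $E$ produced by the blowings-up. Condition (2) of Definition \ref{def:admiss} allows a stratification by Hilbert-Samuel function; on a maximal-contact hypersurface along the maximum stratum, Weierstrass preparation gives a monic presentation
\[
f(z) = z^d + a_1(x) z^{d-1} + \cdots + a_d(x),
\]
whose discriminant divides a power of the product of the coordinates defining $E$. The hypothesis that we only need preserve $X^{\nc(3)}$ translates into the fact that, generically on each stratum, $f$ factors into branches whose pairwise intersection divisor has snc components of multiplicity at most three.

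Second, I would apply the splitting theorem for monic polynomials with regular coefficients generically nc of bounded order. The input is the presentation above with the $\nc(3)$ bound; the output, after further admissible blowings-up supported in the complement of $X^{\nc(3)}$, is a factorization of $f$ into factors each of which, up to a monomial and a unit, is the determinant of a circulant matrix of bounded size. The key role of the order-three hypothesis is in controlling the local monodromy: since only up to three divisor components meet at any point of the discriminant locus, the Galois-type action on the branches factors through a product of cyclic groups coming from each component, and the passage to roots of the local coordinates (which is exactly what the circulant determinant encodes) trivializes this action. Each such circulant factor has smooth normalization by Proposition \ref{prop:circ}, so these become the minimal singularities appearing in the statement.

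Third, I would globalize: because admissibility is phrased in terms of the intrinsic Hilbert-Samuel function, the local normal forms patch in the \'etale topology, giving a globally defined sequence of admissible smooth blowings-up $\s\colon X' \to X$ preserving $X^{\nc(3)}$ and leaving only the listed minimal singularities on $X'$. This proves the $\nc(3)$ analogue of Conjecture \ref{conj:min} and hence, a fortiori, that of Conjecture \ref{conj:main}.

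The main obstacle is the splitting step. In arbitrary dimension the coefficients $a_i(x)$ depend on arbitrarily many variables, and the combinatorics of the intersections among components of $E$ becomes unbounded; without the $\nc(3)$ restriction, the splitting algorithm is forced to produce new, non-circulant residues that are not in any finite list. The role of the order-three bound is precisely to cap this combinatorial explosion: at any point only three components meet, so after the blowings-up needed to separate branches the discriminant is locally a product of at most three monomials, and a finite, explicit induction on $d$ (using the known $\dim \le 4$ case of Theorem \ref{thm:dim4} as a base for transverse slices to the three-component strata) closes the list of circulant normal forms that can occur.
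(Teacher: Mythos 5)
There is a genuine gap at the heart of your second step. You ``invoke the splitting theorem for monic polynomials with regular coefficients generically nc of bounded order'' as if it were available machinery, but for several exceptional variables $w_1,\ldots,w_r$ this is exactly Question \ref{conj:opt}, which is open in general and is answered in the paper only for $k\le 3$. Your proposed mechanism --- that the monodromy on the branches factors through a product of cyclic groups, one per divisor component, and that passing to roots of the coordinates trivializes it --- is the content of Lemma \ref{lem:irred2pt}, but that lemma presupposes that a splitting of $f(v_1^{q_1},\ldots,v_r^{q_r},u,x,z)$ already exists; it does not produce one. The actual existence of the splitting after admissible blowings-up is special to degree $\le 3$: one first arranges (Lemma \ref{lemma:Dsq}) that the discriminant $D=-\tfrac{1}{27}(C^2-4B^3)$ becomes a square after combinatorial blowings-up with centres $\{z=x=w_j=0\}$, and then the cubic identity $4B^3=(C-A)(C+A)$ forces each factor $C\pm A=2\eta_{1,2}^3$ to be a cube times a monomial (Lemma \ref{lem:3.3}), which is what yields the splitting in $\IC\llb u, w^{1/3},x\rrb$. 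Your alternative closing device --- an induction on $d$ using Theorem \ref{thm:dim4} as a base ``for transverse slices to the three-component strata'' --- does not appear in the paper and would not work as stated: the extra parameters $u$ cannot be sliced away, since the coefficients $a_i(w,u,x)$ genuinely depend on them and the splitting must be uniform in $u$; the paper instead carries the $u$ variables through the splitting lemmas directly.

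Two further steps are missing even granting the splitting. First, splitting does not by itself give circulant normal form: the roots come with extra monomial factors $w^{n_\ell}$ (pre-circulant form), and one needs the cleaning algorithm of \S\ref{subsec:clean} together with the globalization trick of Remark \ref{rem:D_1trick} to reduce these to genuine circulant points by invariantly defined admissible centres --- your appeal to ``patching in the \'etale topology'' elides this. Second, the minimal-singularity list for the $\nc(3)$ theorem is not just products of circulant determinants: it necessarily includes their \emph{neighbours}, e.g.\ the degenerate pinch point $\De_3(z,w^{1/3}x_1,w^{2/3})$, which arise in the distinguished divisor $D_1$ and must be catalogued by the explicit blow-up sequences of Section \ref{sec:alg}; and the iteration must then descend through the strata $S_{2,r}$ and $S_{1,r}$ for all $r$, where the centres of blowing up acquire limit points on the previously treated loci $\Sigma_{p,r}$ and one must check that the limiting $\cp(2)$-type singularities are preserved (\S\ref{subsec:MinimalHigherDim}). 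None of this is addressed in your outline.
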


\begin{remark}\label{rem:min}
In both theorems, the partial desingularization $X'$ of $X$ has smooth normalization. This phenomenon is
particular to $\dim X \leq 4$, or to normal crossings of low order as in Theorem \ref{thm:nc(3)} (see Remark \ref{rem:forward}).
\end{remark}

Conjecture \ref{conj:min} for $\dim X \leq 3$, and the analogue of Theorem \ref{thm:nc(3)} for $X^{\nc(2)}$, are established in \cite{BLMmin2}. The main novelties of the current article are:
\begin{itemize}
\item the development of general techniques for factorization (or splitting) of a monic polynomial with
regular coefficients which satisfies a generic normal crossings hypothesis (Section \ref{sec:split}, see also 
Theorem \ref{thm:splitintro} below);
 \item the use of resolution of
singularities techniques together with such splitting results to obtain normal forms of products of circulant singularities (Sections \ref{sec:lim}
and \ref{sec:triplenc}, see also Theorem \ref{thm:limintro});
\item techniques to move away limits of singularities that occur in a neighbourhood of a circulant point, to
leave only minimal singularities in certain distinguished components of the exceptional divisor (Section \ref{sec:alg}).
\end{itemize}
These techniques are enough to prove Theorems \ref{thm:dim4} and \ref{thm:nc(3)}, as we show in Section \ref{sec:alg}, following an inductive strategy presented in 
\S\ref{subsec:overviewproof}, and discussed in a concluding remark in \S\ref{subsec:concl}. Looking forward,
we mention recent progress on the techniques above in the general case, in Remark \ref{rem:forward}.

We can, in fact, prove more precise versions of Theorems \ref{thm:dim4} and \ref{thm:nc(3)}, for a pair $(X,E)$, where
$E$ is an snc divisor; see Theorems
\ref{thm:mainA}, \ref{thm:mainB} and \ref{thm:main3}.

\begin{remark}\label{rem:minimal}
The term \emph{minimal singularities} comes from \cite{BMmin1}; although the resemblance to ``minimal''
in the minimal model program is not coincidental, the meaning is not the same. Minimal singularities may be 
compared also to the singularities of the image a generic morphism of smooth varieties $X \to Z$, where $\dim Z = \dim X +1$
(see, for example, \cite{Rob}), or to the singularities of stable mappings of differentiable manifolds $X \to Z$, $\dim Z = \dim X +1$.
The notions coincide if $\dim X \leq 2$, but not in general.
\end{remark}

The resolution of singularities techniques used in the article involve the desingularization invariant $\inv$ of \cite{BMinv}, \cite{BMfunct}.
As an illustration of our use of these techniques, let us sketch a proof of Theorem \ref{thm:pp}.

\begin{proof}[Proof of Theorem 1.4]
We consider a hypersurface $X$ in 3 variables. Then the triple normal crossings $\nc(3)$ points of $X$ are isolated,
and the $\nc(2)$ locus has codimension two in the ambient smooth variety
(codimension one in $X$). We can blow up with smooth centres in the
complement of $\nc(3)$, without modifying $\nc(2)$, until the maximal value of the desingularization invariant $\inv$
equals the value $\inv(\nc(2))$ that it takes at an $\nc(2)$ point. Then the locus of points where $\inv = \inv(\nc(2))$
is a smooth curve $C$ in the strict transform of $X$.

A basic understanding of the desingularization invariant (which we will recall and use in the article) shows that,
at any point of $C$, we can choose local coordinates in which (the strict transform of) $X$ is given by an equation
\begin{equation}\label{eq:prepp}
z^2 - w^k x^2 = 0,
\end{equation}
where $w$ is an exceptional divisor; then $X$ is $\nc(2)$ on $\{z=x=0,\, w\neq 0\}$.

Then, by finitely many blowings-up with centre $\{z=w=0\}$, we can transform $X$ to either
\begin{align*}
z^2 - x^2 &= 0 \qquad \nc(2)\\
\text{or}\quad z^2 - wx^2 &= 0 \qquad \text{pp}
\end{align*}
(according as $k$ is even or odd).

(Note that, in any case, $z^2 - w^k x^2$ splits as a polynomial in $w^{1/2}, x, z$.)

The exponent $k$ appearing in \eqref{eq:prepp} is, in fact, a local invariant of $X$, and the preceding blowings-up 
defined in local coordinates extend to global admissible blowings-up (see Theorem \ref{thm:limintro} 
and \S\ref{subsec:clean}).
\end{proof}

\subsection{Circulant singularities}\label{subsec:circ}
Our minimal singularities, in general, are products of \emph{circulant singularities}, described in detail in Section \ref{sec:circ}
following (see also \eqref{eq:cpintro} below),
together with their \emph{neighbours}. (For example, given any singularity that has to be admitted after
blowing-up sequences preserving nc, any neighbouring singularity must also be admitted. See also Section \ref{sec:alg}.)

Circulant singularities were introduced in \cite{BMmin1},
\cite{BLMmin2} (where they were called \emph{cyclic singularities}); we give a description in terms of circulant
matrices (suggested by Franklin Vera Pacheco) in Section \ref{sec:circ}, which is convenient for studying their branching behaviour.
A circulant singularity $\cp(k)$ of order $k$ is a singularity which must be admitted as a limit of $\nc(k)$, after
a blowing-up sequence preserving normal crossings. In particular, $\pp = \cp(2)$ and $\smooth = \cp(1)$. The circulant singularity
$\cp(k)$ is the singularity at the origin of the hypersurface in $\geq k+1$ variables given by
$$
\De_k(x_0, w^{1/k}x_1, \ldots, w^{(k-1)/k}x_{k-1}) = 0,
$$
where
\begin{equation}\label{eq:cpintro}
\De_k(X_0, X_1, \ldots, X_{k-1}) = \prod_{\ell = 0}^{k-1}\, (X_0 + \ep^\ell X_1 + 
                                     \cdots + \ep^{(k-1)\ell}X_{k-1})
\end{equation} 
with $\ep = e^{2\pi i/k}$, is the determinant of the circulant matrix in $k$ indeterminates
$(X_0,X_1,\ldots,X_{k-1})$; see \eqref{eq:detcirc}. For example,
\begin{align*}
\cp(2) = \pp: &\qquad \De_2(z,w^{1/2}x) = z^2 -wx^2,\\
\cp(3): &\qquad \De_3(z, w^{1/3}y, w^{2/3}x) = z^3 + wy^3 + w^2x^3 - 3wxyz.
\end{align*}

\begin{examples}\label{ex:min}
(1) \emph{Minimal singularities in 4 variables}; i.e., $\dim X =3$ \cite{BLMmin2}.
The complete list of minimal singularities in 4 variables comprises $\cp(3)$ and its (singular) neighbours,
together with $\nc(4)$, $\cp(2)$ and $\smooth \times \cp(2)$, where the latter means product
as ideals; i.e., $y(z^2-wx^2) = 0$.
The neighbours of $\cp(3)$ are $\nc(2)$, $\nc(3)$, and the following singularity of order 2:
$$
\De_3(z, w^{1/3}y, w^{2/3}) = 0.
$$
The latter was called a \emph{degenerate pinch point} in \cite{BLMmin2}.

\smallskip
In general, the minimal singuarities in $n+1$ variables include all those which occur in $\leq n$ variables
(understood as formulas in $n+1$ variables where not all variables appear), together with $\nc(n+1)$ and
all singularities in small neighbourhoods of products of circulant singularities
that make sense a limits of $\nc(k)$, $k = n$ (see Theorem \ref{thm:limintro}). 
But the following shows that this list is not exhaustive.

\medskip\noindent
(2) \emph{Minimal singularities in 5 variables}; i.e., $\dim X = 4$ (see Section \ref{sec:alg}).
Minimal singularities in 5 variables include the following limits of 4-fold normal crossings $\nc(4)$: 
$\cp(4)$, $\smooth \times \cp(3)$, $\cp(2) \times \cp(2)$,
$\smooth \times \smooth \times \cp(2)\, =\, \nc(2) \times \cp(2)$.
The circulant singularity $\cp(4)$ is the vanishing locus of 
$$
\De_4(x_0, w^{1/4}x_1, w^{2/4}x_2, w^{3/4}x_3).
$$
\smallskip\noindent
Following are the \emph{neighbours} of $\cp(4)$:
\begin{align*}
\text{(1)} &\qquad \De_4(x_0, w^{1/4}x_1, w^{2/4}x_2, w^{3/4})\,,\\
\text{(2)} &\qquad  \De_4(x_0, w^{1/4}x_1, w^{2/4}, w^{3/4}x_3)\,,\\
\text{($2'$)} &\qquad  \De_4(x_0, w^{1/4}x_1, w^{2/4}, w^{3/4}x_2x_3)\,,\\
\text{(3)} &\qquad  \De_4(x_0, w^{1/4}x_1, w^{2/4}, w^{3/4})\,.
\end{align*}
Items (1), (2) and (3) in this list are the non-$\nc$ singularities in an arbitrarily small
neighbourhood of $\cp(4)$ (except for the latter itself), while ($2'$) illustrates
a phenomenon that does not appear in fewer than 5 variables; ($2'$) has to
be admitted as a limit
of singularities of the form (2). In ($2'$), $x_2$ is an exceptional divisor. For
details, see Section \ref{sec:alg}.
\end{examples}

\subsection{Approach to the main problems}\label{subsec:approach}
We use the desingularization invariant $\inv$ and the resolution of singularities algorithm
of \cite{BMinv}, \cite{BMfunct} to the reduce our main problems to a study of the
singularities of a hypersurface $X$ near a point
in the closure of the $\nc(k)$-locus, for given $k$, where $X$ has a convenient description
in suitable local \'etale or analytic coordinates.

Normal crossings singularities are singularities of hypersurfaces. We say that
$X$ is a \emph{hypersurface} if, locally, $X$ can be defined by a principal ideal on
a smooth variety. (We say that $X$ is an \emph{embedded hypersurface} if $X
\hookrightarrow Z$, where $Z$ is smooth and $X$ is defined by a principal ideal on $Z$.)
Conjecture \ref{conj:min} can be reduced to the case of a hypersurface using 
\cite{BMinv, BMfunct}. Indeed, the desingularization algorithm of
these articles involves blowing up
with smooth centres in the maximum strata of the Hilbert-Samuel function. The latter
determines the local embedding dimension, so the algorithm first eliminates points of
embedding codimension $> 1$ without modifying nc points.  
(Recall that if $H$ is the Hilbert-Samuel function of the local ring
of a variety at a given point $a$, then the minimal embedding dimension
at $a$ is $H(1)-1$.)

Let $X \hookrightarrow Z$ denote an embedded hypersurface, $\dim Z = n$.
Then, for any $k\leq n$,  the $\nc(k)$-locus of $X$ is a smooth subspace
of $X$ of codimension $k$ in $Z$. 

The desingularization invariant $\inv$ is upper-semicontinuous 
with respect to the lexicographic ordering,
and the locus of points where $\inv$ takes a given value is smooth.
The value $\inv(\nc(k))$ of $\inv$ at a normal crossings point of order $k$ (in \emph{year zero};
i.e., before we start blowing up) is
\begin{equation}\label{eq:invnc}
\inv(\nc(k)) = (k,0,1,0,\ldots,1,0,\infty),
\end{equation}
where there are $k$ pairs before $\infty$. 

We remark that the condition $\inv(a) = \inv(\nc(k))$ does not, in general, imply that $X$
is nc at $a$. For example, if $X$ is the affine variety $x_1^k + \cdots + x_k^k = 0$,
then $\inv(a) = (k,0,1,0,\ldots,1,0,\infty)$, where there are $k$ pairs, but $X$ is not $\nc$
at $0$ if $k>2$ .

More details of $\inv$ and the desingularization algorithm will be recalled in \S\ref{subsec:inv}.
We also refer the reader to \cite{BMfunct} and to the \emph{Crash course on the desingularization invariant}
\cite[Appendix A]{BMmin1}. Note, in particular, that $\inv$ is defined recursively over a sequence of
admissible blowings-up in the desingularization algorithm. In \emph{year} $j$ (i.e., after $j$ blowings-up), in general,
$\inv$ depends on the previous blowings-up; it is not simply the year zero $\inv$ computed as if year $j$
were year zero.

\smallskip
Our approach to the main problems involves a general inductive or recursive strategy which is
the subject of Section \ref{sec:alg} below; it is based on
the following inductive formulation of Conjecture \ref{conj:min}:
given $k$, there is a finite composite of admissible smooth blowings-up
$\s: X' \to X$, preserving normal crossings of order up to $k$, such that $X'$ has only minimal singularitites. 
Theorems \ref{thm:dim4} and \ref{thm:nc(3)} both follow from this assertion. We will,
in fact, need a more precise version of the assertion for a pair $(X,E)$, where $X\hookrightarrow Z$ is an
embedded hypersurface and $E\subset Z$ is an snc divisor; see Section \ref{sec:alg}.

The inductive step of the argument can be described roughly in the following way (though it is needed
actually in the context of a pair $(X,E)$; see Claim \ref{claim:ind}).
\begin{itemize}
\item Blow up following the desingularization algorithm until $\inv\leq \inv(\nc(k))$. Then 
the locus of points where $\inv = \inv(\nc(k))$ is a smooth
closed subspace $S_k$ of codimension $k$ in $Z$. We can further blow up to eliminate
any component of $S_k$ on which $X$ is not generically $\nc(k)$.
\item Modify non-nc points of $S_k$ to get minimal singularities in $\Sigma_k = S_k \cup D_k$, where
$D_k$ is a distinguished subset of the exceptional divisor, and only normal crossings in $U\backslash\Sigma_k$, for
some neighbourhood $U$ of $\Sigma_k$.
\item Apply the inductive hypothesis in the complement of $\Sigma_k$; the centres of blowing up are closed in $X$ in the
case of $S_k$, or extend to
to global admissible centres when we deal with a pair $(X,E)$, in general.
\end{itemize}

Modification of the stratum $S=S_k$ itself involves three main steps: \emph{splitting}, described in
Theorem \ref{thm:splitintro} following, \emph{cleaning} to get circulant normal form as in Theorem \ref{thm:limintro},
and \emph{moving away} limits of singularities in a neighbourhood of a (product) circulant point to leave only
minimal singularities in the distinguished divisor $D_k$. The latter step will be described in Section \ref{sec:alg}.

\subsection{Splitting techniques and circulant normal form}\label{subsec:split}
The non-$\nc(k)$ points of $X$ in $S = S_k$ form a proper closed subspace $T$ (see Lemma \ref{lemma:genericnc}).
After resolving the singularities of $T$ if necessary, we can assume that, given $a\in S$,
we can choose \'etale (or analytic) local coordinates
\begin{equation*}
(w,u,x,z) = (w_1,\ldots,w_r, u_1,\ldots,u_q, x_1,\ldots,x_{k-1},z)
\end{equation*}
for $Z$ at $a$, in which $X$ is given by $f(w,u,x,z)=0$, where
\begin{equation}\label{eq:weier}
f(w,u,x,z) = z^k + a_1(w,u,x)z^{k-1} +\cdots + a_k(w,u,x),
\end{equation}
the coefficients $a_i(w,u,x)$ are regular (or analytic) functions, $S = \{z=x=0\}$,
the exceptional divisor is $w_1\cdots w_r=0$, and
the complement of $\{z=x=0,\,w_1\cdots w_r = 0\}$ maps isomorphically onto the
original set of $\nc(k)$ points of $X$ in $S$.
(It follows that every coefficient $a_i$ vanishes to order at least $i$ with respect
to $(x,z)$ at $a$.) 

We are interested in the splitting or factorization of $f$ at $a$ as
\begin{equation}\label{eq:split}
f(w,u,x,z) = \prod_{j=1}^k \left(z- b_j(w,u,x)\right),
\end{equation}
where each $b_j$ belongs to the ideal generated by $x_1,\ldots,x_{k-1}$.
For example, at an $\nc(k)$ point, there is a formal splitting \eqref{eq:split},
where each $b_j$ has order $1$.

From the latter generic splitting condition, it follows (at least in the algebraic case) that there is a unique
splitting of $f$ in $\overline{\IC(w)}\llb u,x\rrb [z]$, where each $b_j(u,w,x) \in 
\overline{\IC(w)}\llb u,x\rrb$. Here $\overline{\IC(w)}$ denotes an algebraic
closure of the field of fractions $\IC(w)$ of the polynomial ring $\IC[w]$
(see \S\ref{subsec:basicsplit}).

For example, if there is a single $w$ variable, then $f$ splits over $\IC(w^{1/p})\llb u,x\rrb$,
for some $p$, by the Newton-Puiseux theorem and elementary Galois theory, 
and we can take $p=k$, if $f$ is irreducible (see Lemmas \ref{lemma:finitesplit} and \ref{cor:finitesplit}).

Following is a simple basic example which illustrates Theorem \ref{thm:splitintro} following, and also
shows that the conclusion in this result cannot, in general, be strengthened.

\begin{example}\label{ex:basic} Let
$$
f(w,x,z) = z^2 + (w^3 + x) x^2.
$$
Then $f$ (or the subvariety $X$ of $\IA_{\IC}^3$ defined by $f(w,x,z)=0$) is $\nc(2)$ at
every nonzero point of the $w$-axis $\{x=z=0\}$. The function $f$ does not split over $\IC\llb w,x\rrb$,
but we can write
$$
f(v^2,x,z) = z^2 + v^6\left(1+\frac{x}{v^6}\right) x^2,
$$
so that $f(w,x,z)$ splits in $\IC(w^{1/2})\llb x\rrb [z]$. 

Note that $f(v^2,x,z)$ is not normal crossings
at $0$ as a formal power series in $\IC\llb v,x,z\rrb$, but it is normal crossings in $\IC(v)\llb x,z\rrb$
(i.e., as a formal power series in $(x,z)$ with coefficients in the field $\IC(v)$).

Consider the blowing-up $\s$ of the origin in $\IA_{\IC}^3$. The $w$-axis lifts to the $w$-chart of $\s$,
given by substituting $(w,wx,wz)$ for $(w,x,z)$, and the strict transform of $X$ is 
given by $f' = 0$ in the $w$-chart, where
$$
f'(w,x,z) := w^{-2} f(w,wx,wz) = z^2 + w(w^2 + x)x^2.
$$
After two more blowings-up of the origin, we get
$$
f'(w,x,z) = z^2 + w^3(1 + x)x^2, 
$$
so that $f'(w,x,z)$ splits over $\IC\llb w^{1/2},x\rrb$ (or $f'(v^2,x,z)$ splits in an \'etale neighbourhood of the origin).

After an additional \emph{cleaning blowing-up}, with centre $\{z=w=0\}$, we get a pinch point.
\end{example}

\begin{theorem}[limits of $\nc(k)$ in $n= k+1$ variables]\label{thm:splitintro} Let
\begin{equation}\label{eq:splitintro}
f(w,x_1,\ldots,x_{k-1},z) = z^k + a_1(w,x)z^{k-1} +\cdots + a_k(w,x),
\end{equation}
where the coefficients $a_i(w,x)$ are regular (or analytic) functions. If $f(w,x,z)$ is $\nc(k)$ on $\{z=x=0,\,w\neq0\}$,
then, after a finite number of blowings-up of $0$, $f$ splits over $\IC\llb w^{1/p}, x\rrb$, for some positive integer $p$.
\end{theorem}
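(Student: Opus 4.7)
The plan is to combine a Newton--Puiseux factorization of $f$ in the ramified direction $w$ with an iterative elimination of the $v$-poles of the resulting Puiseux coefficients under blowings-up of the origin, the elimination being controlled by a Newton-polytope growth bound.

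\emph{Step 1 (Newton--Puiseux factorization in $w$).} By the generic $\nc(k)$ hypothesis, $f$ has $k$ distinct smooth branches at each $(w_0,0,0)$ with $w_0 \neq 0$. Newton--Puiseux/Abhyankar's lemma applied in the single ramification direction $w$, together with elementary Galois theory over $\IC(w)$ (cf.~Lemma~\ref{lemma:finitesplit} and Corollary~\ref{cor:finitesplit}), yields $p\in\IZ_{>0}$ such that, setting $v = w^{1/p}$,
\begin{equation*}
f(v^p,x,z) \;=\; \prod_{j=1}^{k}\bigl(z - b_j(v,x)\bigr),
\qquad b_j \in \IC(v)\llb x_1,\ldots,x_{k-1}\rrb,
\end{equation*}
with $b_j \in (x_1,\ldots,x_{k-1})$. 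Writing $b_j = \sum_{|\alpha|\geq 1} c_{j,\alpha}(v)\, x^\alpha$ with $c_{j,\alpha}\in\IC(v)$, the task reduces to eliminating all poles $\mathrm{val}_v(c_{j,\alpha}) < 0$ after finitely many blowings-up of the origin.

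\emph{Step 2 (Blow-up transformation law and linear-in-$x$ regularity).} The blowing-up of the origin in $(w,x,z)$, restricted to the $w$-chart, is the substitution $(w,x,z) \mapsto (w,wx,wz)$ followed by dividing $f$ by $w^k$ (valid since $f$ has order $k$ at the origin). The roots transform as $b_j \mapsto w^{-1}b_j(w, wx)$; in $(v,x)$-coordinates, this is $c_{j,\alpha}(v) \mapsto v^{p(|\alpha|-1)}c_{j,\alpha}(v)$, so each blow-up increases the $v$-valuation of the $x^\alpha$-coefficient by $p(|\alpha|-1)$: unchanged for $|\alpha|=1$, increased by at least $p$ for $|\alpha|\geq 2$. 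Moreover, the linear-in-$x$ coefficients $c_{j,e_i}(v)$ are already in $\IC\llb v\rrb$: the $x_i^s$-coefficient of $a_s(v^p,x)$ equals $\pm e_s(c_{1,e_i}(v),\ldots,c_{k,e_i}(v))$, which belongs to $\IC\llb v^p\rrb \subset \IC\llb v\rrb$; hence the $c_{j,e_i}(v)\in\IC(v)\subset\IC((v))$ satisfy a common monic polynomial over $\IC\llb v\rrb$, and so lie in $\IC\llb v\rrb$ by integral closedness of this DVR in $\IC((v))$.

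\emph{Step 3 (Linear pole bound and conclusion).} The remaining key ingredient is a uniform Newton-polygon bound
\begin{equation*}
-\mathrm{val}_v\bigl(c_{j,\alpha}(v)\bigr) \;\leq\; A|\alpha| + B \qquad \text{for all } |\alpha| \geq 2,
\end{equation*}
with constants $A,B$ depending only on $f$. Granted this, combined with Step 2, after $N$ blowings-up of the origin, the new $v$-valuation of the $x^\alpha$-coefficient is at least $-A|\alpha| - B + Np(|\alpha|-1)$, which is non-negative for all $|\alpha|\geq 2$ once $N$ is sufficiently large. Together with the already-established regularity of the linear-in-$x$ coefficients, every $c_{j,\alpha}$ is then regular in $v$, so $f$ splits over $\IC\llb v,x\rrb = \IC\llb w^{1/p},x\rrb$, as required.

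\emph{Main obstacle.} The hardest step is the uniform linear pole bound of Step~3. Its proof requires controlling the growth rates of the Puiseux coefficients of each $b_j$ in terms of the (finitely many) slopes of the Newton polytope of $f$ in the $(v,x)$-plane; this is expected to follow from a Newton-polytope iteration of Hensel's lemma or, equivalently, from an Abhyankar--Jung-type structure theorem for quasi-ordinary singularities, provided the essential slopes form a finite set that is either stable or well-foundedly decreasing under the blow-up transformation of Step~2.
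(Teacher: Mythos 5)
Your Steps 1 and 2 are sound and in fact reproduce the architecture of the paper's proof: split $f$ over $\overline{\IC(w)}\llb x\rrb$ (Lemma \ref{lemma:basicsplit}), pass to $\IC(w^{1/p})\llb x\rrb$ (Corollary \ref{cor:finitesplit}), observe that the blowing-up of the origin multiplies the $x^\al$-coefficient of each root by $v^{p(|\al|-1)}$, and conclude that a pole bound linear in $|\al|$ is exactly what is needed. Your separate treatment of the $|\al|=1$ coefficients via integrality over the DVR $\IC\llb v\rrb$ is a correct and worthwhile observation (the paper is silent on this point). The problem is that Step 3 --- the bound $-\mathrm{val}_v(c_{j,\al}) \leq A|\al|+B$ --- is precisely the nontrivial content of the theorem, and you do not prove it: you state that it is ``expected to follow'' from an Abhyankar--Jung-type structure theorem or a Newton-polytope iteration of Hensel's lemma, subject to an unverified hypothesis on the slopes. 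Abhyankar--Jung does not apply here, because the discriminant of $f$ is not assumed to be a monomial times a unit (the generic $\nc(k)$ hypothesis only controls $D$ away from $\{w=0\}$; cf.\ Example \ref{ex:basic}, where $D$ is $-4(w^3+x)x^2$ up to a constant). So as written the proposal has a genuine gap at its decisive step.

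For comparison, the paper closes this gap by citing the multivariate Newton--Puiseux theorem of Soto and Vicente \cite{SV}: there is a unipotent upper-triangular monomial substitution $\psi_A$ (with $w=x_k$ fixed) after which the roots become honest elements of $\IC\llb x_1^{1/q},\ldots,x_{k-1}^{1/q}, w^{1/q}\rrb$. Comparing with the splitting of Lemma \ref{lemma:basicsplit}, the transformed roots $\psi_A(b_i)$ actually lie in $\IC\llb x, w^{1/q}\rrb$, and since $\psi_A$ multiplies $x^\al$ by $w^{\sum \al_j a_{jk}} \leq w^{\mu|\al|}$, nonnegativity of the $w$-valuation of the coefficients of $\psi_A(b_i)$ is exactly the linear pole bound $-\mathrm{val}_w(b_{i,\al}) \leq \mu|\al|$. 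If you want to complete your argument, you need either to invoke \cite{SV} in this way or to supply an independent proof of the linear bound; the Newton-polygon heuristic you sketch would have to be made precise, and it is not clear that it terminates without something equivalent to the Soto--Vicente normalization.
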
 

Theorem \ref{thm:splitintro} is proved in Section \ref{sec:split} using the splitting over $\overline{\IC(w)}\llb x\rrb$
together with a multivariate Newton-Puiseux theorem due to Soto and Vicente \cite{SV}, to show that the powers
of $w$ in the denominators of the roots are bounded linearly with respect to the degree with respect to $x$ in the numerators.

\begin{question}\label{conj:opt}
Consider the general case,
\begin{multline*}
f(w_1,\ldots,w_r, u_1,\ldots,u_q, x_1,\ldots,x_{k-1},z) \\= z^k + a_1(w,u,x)z^{k-1} +\cdots + a_k(w,u,x),
\end{multline*}
where $f(w,u,x,z)$ is is $\nc(k)$ on $\{z=x=0,\,w_1\cdots w_r \neq0\}$. Is it true that, after finitely many blowings-up with
successive centres of the form $\{z=x=w_j =0\}$, for some $j$, $f$ splits over
$\IC\llb u, w^{1/p}, x\rrb$, for some $p$, where $w^{1/p} := (w_1^{1/p},\ldots, w_r^{1/p})$?
\end{question}

We give a positive answer to this question in the case $k\leq 3$, under the additional hypothesis $\inv(0) = \inv(\nc(k))$; see Proposition \ref{prop:limnc3}.

Theorem \ref{thm:limintro}  following ties together the splitting theorem \ref{thm:splitintro} with the notion of circulant singularity.

\begin{remark}\label{rem:memory}
In Theorem \ref{thm:limintro} and throughout the article, it is convenient to continue to use the same notation $X$ instead of,
for example, $X_j$ for the strict transform of $X = X_0$ after $j$ blowings-up.
\end{remark}

\begin{theorem}[circulant normal form]\label{thm:limintro}
Consider an embedded hypersurface $X\hookrightarrow Z$, as above, and assume that
$n := \dim Z = k+1$. Let $U$ denote an open subset of $Z$. Assume that (after a sequence of $\inv$-admissible
blowings-up of $U$; cf. \S\ref{subsec:inv}) the maximum value of $\inv$
on $U$ is $\inv(\nc(k))$ and that $X$ is generically $\nc(k)$ on the stratum $S := \{\inv = \inv(\nc(k))\}$ in $U$; in particular,
$S$ is a smooth curve in $U$. Then there is a finite sequence of admissible blowings-up of $U$ (in fact, admissible
for the truncated invariant $\inv_1$; see \S\ref{subsec:inv}),
preserving the $\nc(k)$-locus, after which $X$
is a product of circulant singularities at every point of $S$; i.e., $X$ can be defined locally at every point
of $S$ by an equation of the form
\begin{equation}\label{eq:limintro}
\prod_{i=1}^s \De_{k_i} \left(y_{i0}, w^{1/k_i} y_{i1}, \ldots, w^{(k_i-1)/k_i}y_{i,k_i-1}\right) = 0,
\end{equation}
in suitable \'etale (or local analytic) coordinates\, $\left(w, (y_{i\ell})_{\ell=0,\ldots,k_i -1,\, i=1,\ldots s}\right)$,
where $k_1 + \cdots + k_s = k$.
\end{theorem}

The equation \eqref{eq:limintro} defines a variety with smooth normalization.

Note that $\nc(k)$ is itself a product of circulant singularities (each of order $1$). Theorem \ref{thm:limintro}
is proved in Section \ref{sec:lim}. A proof of
Conjecture \ref{conj:min} following our approach requires analogues of
Theorems \ref{thm:splitintro} and \ref{thm:limintro} for $k+1 < n$; we prove these results
for $k\leq 3$ in Section \ref{sec:triplenc}. The techniques of Sections \ref{sec:split},
\ref{sec:lim} and \ref{sec:triplenc} are put together in Section \ref{sec:alg}, for modification of the strata
$S_k$ as described in \S\ref{subsec:approach} above. An overview the the proofs of Theorems 
\ref{thm:dim4} and \ref{thm:nc(3)} is given in \S\ref{subsec:overviewproof}.

\begin{remark}\label{rem:forward}
\emph{Looking forward.} A positive answer to Question \ref{conj:opt} and a general
version of Theorem \ref{thm:limintro} have been recently obtained by the first two authors, and are
planned for a forthcoming article. The extension of Theorem \ref{thm:limintro} requires a more
general formulation of the idea of a product of circulant singularities \eqref{eq:limintro}; in particular,
the analogous factors may include fractional powers of more than a single
exceptional variable $w$, so the normalization need no longer be smooth.
\end{remark}

\medskip
\section{Circulant singularities}\label{sec:circ}
Circulant singularities provide a generalization to arbitrary dimension of the \emph{pinch point}
singularity that occurs at the origin of Whitney's umbrella $z^2-w y^2 = 0$.

\medskip
Given indeterminates $X = (X_0,X_1,\ldots,X_{k-1})$, we define the \emph{circulant matrix}
\medskip
\begin{equation}\label{eq:circ}
C_k(X_0,X_1,\ldots,X_{k-1}) :=
\begin{pmatrix}
X_0 & X_1 & \cdots & X_{k-1} \\
X_{k-1} & X_0 & \cdots & X_{k-2} \\
\vdots & \vdots & \ddots & \vdots \\
X_1 & X_2 & \cdots & X_0
\end{pmatrix}.
\end{equation}
See \cite{KS} for a nice introduction to circulant matrices.

\smallskip
The circulant matrix $C_k(X_0,X_1,\ldots,X_{k-1})$ has eigenvectors
$$
V_\ell = (1,\, \ep^\ell,\, \ep^{2\ell},\, \ldots,\, \ep^{(k-1)\ell}),
$$
$\ell = 0, \ldots, k-1$, where $\ep= e^{2\pi i/k}$. The corresponding
eigenvalues are
\begin{equation}\label{eq:eigenval}
Y_\ell = X_0 + \ep^\ell X_1 + \cdots + \ep^{(k-1)\ell}X_{k-1},\quad \ell = 0, \ldots, k-1.
\end{equation}

Let $\De_k$ denote the determinant $\det C_k$. Then
\begin{equation}\label{eq:detcirc}
\begin{aligned}
\De_k (X_0, \ldots, X_{k-1}) &=  Y_0 \cdots Y_{k-1}\\
               &= \prod_{\ell = 0}^{k-1}\, (X_0 + \ep^\ell X_1 + \cdots + \ep^{(k-1)\ell}X_{k-1}).
\end{aligned}
\end{equation}

Given indeterminates $(w,x_0,\ldots,x_{k-1})$, set
\begin{align}\label{eq:circfact}
P_k (w,x_0,\ldots,x_{k-1}) :=&\, \De_k (x_0,\, w^{1/k}x_1,\, \ldots,\, w^{(k-1)/k}x_{k-1})\\
                               =&\, \prod_{\ell = 0}^{k-1}\, (x_0 + \ep^\ell w^{1/k} x_1 + 
                                     \cdots + \ep^{(k-1)\ell}w^{(k-1)/k}x_{k-1})\nonumber
\end{align}
Then $P_k (w,x_0,\ldots,x_{k-1})$ is an irreducible polynomial.
We define the \emph{circulant} or \emph{circulant point} singularity $\text{cp}(k)$ as the
singularity at the origin of the variety $X$ defined by the equation
$P_k (w,x_0,\ldots,x_{k-1}) = 0$; i.e., by the equation
$$
\De_k (x_0,\, w^{1/k}x_1,\, \ldots,\, w^{(k-1)/k}x_{k-1}) = 0.
$$
(In \cite{BMmin1, BLMmin2}, a circulant point is called a ``cyclic point''.)

For example, $\text{cp}(2)$ is the pinch point, and $\text{cp}(3)$ is given by
\begin{equation}\label{eq:cp3expansion}
P_3 (w,z,y,x) = z^3 + wy^3 + w^2 x^3 - 3wxyz.
\end{equation}

\begin{proposition}\label{prop:circ}
Circulant singularities have smooth normalization.
\end{proposition}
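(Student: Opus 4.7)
The plan is to exhibit an explicit finite birational morphism from a smooth variety to $X = \{P_k(w,x_0,\ldots,x_{k-1}) = 0\}$, which must then be the normalization. (If $\cp(k)$ is embedded in more than $k+1$ variables, $X$ acquires a smooth direct factor that can be ignored.)

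The key observation is that the uniformization $w = v^k$ kills the fractional powers in \eqref{eq:circfact}: substituting into the product expansion gives
$$
P_k(v^k, x_0, \ldots, x_{k-1}) = \prod_{\ell=0}^{k-1} Y_\ell, \qquad Y_\ell := x_0 + \ep^\ell v x_1 + \cdots + \ep^{(k-1)\ell}v^{k-1}x_{k-1},
$$
and each $Y_\ell$ is a polynomial (linear in $x_0$) with smooth vanishing locus. I would therefore set $Z := \{Y_0 = 0\} \subset \IA^{k+1}_{(v,x_0,\ldots,x_{k-1})}$, noting $Z \cong \IA^k$ via the parametrization $(v,x_1,\ldots,x_{k-1}) \mapsto (v, -vx_1 - \cdots - v^{k-1}x_{k-1}, x_1, \ldots, x_{k-1})$, and define
$$
\pi \colon Z \lra \IA^{k+1}_{(w,x_0,\ldots,x_{k-1})}, \qquad (v,x_0,\ldots,x_{k-1}) \longmapsto (v^k, x_0, \ldots, x_{k-1}).
$$
Since $Y_0$ divides $P_k(v^k,x_0,\ldots,x_{k-1})$, the morphism $\pi$ factors through $X$.

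It then remains to verify two routine properties of $\pi \colon Z \to X$: (a) $\pi$ is finite, because $v$ satisfies the monic integral equation $v^k - w = 0$ over $\pi^* \cO(X)$, so $\cO(Z)$ is generated by $1, v, \ldots, v^{k-1}$ over $\pi^* \cO(X)$; and (b) $\pi$ is birational, because at a general point of $X$ (where $w \neq 0$ and exactly one factor $Y_\ell$ of $P_k(v^k,\ldots)$ vanishes) there is precisely one choice of $v$ with $v^k = w$ satisfying $Y_0 = 0$.

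A finite birational morphism from a smooth (hence normal) variety is the normalization, so $\widetilde{X} \cong Z \cong \IA^k$ is smooth. I do not anticipate a genuine obstacle here; the only subtlety is that the irreducibility of $P_k$ (already recorded above) is needed to know that $X$ is irreducible and therefore admits a single normalization, which is then supplied by $\pi$.
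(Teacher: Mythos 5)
Your argument is exactly the paper's proof: substitute $w=v^k$, take the smooth hypersurface cut out by one linear factor $Y_0$ of $P_k(v^k,x_0,\ldots,x_{k-1})$, and observe that the induced map to $X$ is finite and birational, hence the normalization (the paper cites Mumford, \S III.8, Thm.\ 3, where you spell out the finiteness via the integral equation $v^k-w=0$ and the birationality via the generic uniqueness of the vanishing factor). Your added details are correct, so there is nothing to fix.
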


\begin{proof}
If we set $w=v^k$, then $P_k (w,x_0,\ldots,x_{k-1})$ factors as
$$
\prod_{\ell = 0}^{k-1}\, (x_0 + \ep^\ell v x_1 + 
                                     \cdots + \ep^{(k-1)\ell}v^{k-1}x_{k-1}),
$$
and the morphism $\nu$ to $X$ of the smooth hypersurface defined by any of the factors
is a finite birational morphism. Therefore, $\nu$ is the normalization of $X$ (up to isomorphism);
\cite[{\S}III.8,\,Thm.\,3]{Mum}.
See Corollary \ref{cor:lairez} below
for an elementary proof of the proposition.
\end{proof}

\medskip

\begin{remark}\label{rem:circ}
We rewrite \eqref{eq:eigenval},
\medskip
\begin{equation}\label{eq:eigenmatrix}
\left(\begin{matrix} Y_0\\
                              Y_1\\
                              Y_2\\
                              \vdots\\
                              Y_{k-1}
\end{matrix}\right) 
= 
 \left(\begin{matrix} 1 & 1 & 1& \cdots & 1\\
                               1 & \ep^1 & \ep^2 & \cdots & \ep^{k-1}\\
                               1 & \ep^2 & \ep^4 & \cdots & \ep^{2(k-1)}\\
                               \vdots & \vdots & \vdots & \ddots & \vdots\\
                               1 & \ep^{k-1} & \ep^{2(k-1)} & \cdots & \ep^{(k-1)^2}
 \end{matrix}\right)
 \left(\begin{matrix} X_0\\
                              X_1\\
                              X_2\\
                              \vdots\\
                              X_{k-1}
 \end{matrix}\right)                       
\end{equation}\\
The rows (and the columns) of the matrix in \eqref{eq:eigenmatrix} are the eigenvectors $V_0,\ldots,V_{k-1}$.

\medskip
Recall that 
$$
\sum_{l=0}^{k-1} \ep^{i\ell} = \begin{cases} k, \quad i=0,\\
                                                                    0, \quad i=1,\ldots, k-1.
                                              \end{cases}                      
$$
The inverse of the linear transformation \eqref{eq:eigenmatrix} is
\medskip
\begin{equation}\label{eq:inveigenmatrix}
\left(\begin{matrix} X_0\\
                              X_1\\
                              X_2\\
                              \vdots\\
                              X_{k-1}
\end{matrix}\right) = \,\frac{1}{k}\, \left(\begin{matrix} 1 & 1 & 1& \cdots & 1\\
                               1 & \ep^{k-1} & \ep^{k-2} & \cdots & \ep^{1}\\
                               1 & \ep^{2(k-1)} & \ep^{2(k-2)} & \cdots & \ep^{2}\\
                               \vdots & \vdots & \vdots & \ddots & \vdots\\
                               1 & \ep^{(k-1)^2} & \ep^{(k-1)(k-2)} & \cdots & \ep^{k-1}
 \end{matrix}\right)
 \left(\begin{matrix} Y_0\\
                              Y_1\\
                              Y_2\\
                              \vdots\\
                              Y_{k-1}
\end{matrix}\right)             
\end{equation}
\end{remark}

\medskip
\section{Splitting results}\label{sec:split}
\subsection{Basic splitting lemmas}\label{subsec:basicsplit}
Let $\IC(w)$ denote the field of fractions of the polynomial ring $\IC[w] = \IC[w_1,\ldots,w_r]$. 
Let $\IC\llbr w\rrbr$ denote the field of fractions of the formal power series ring $\IC\llb w\rrb = \IC\llb w_1,\ldots,w_r\rrb$,
and let $\overline{\IC\llbr w\rrbr}$ denote an algebraic closure of $\IC\llbr w\rrbr$. An algebraic closure
$\overline{\IC(w)}$ of $\IC(w)$ is given by the subfield of $\overline{\IC\llbr w\rrbr}$ consisting of elements
that are algebraic over $\IC(w)$ (or over $\IC[w]$).

In a single variable $w$, $\IC\llbr w\rrbr$ is the field of formal Laurent series in $w$ over $\IC$ (with finitely many
negative exponents), and
$\overline{\IC\llbr w\rrbr}$ is given by the field of formal Puiseux series in $w$ over $\IC$; i.e., formal Laurent
series over $\IC$ in $w^{1/k}$, where $k$ ranges over the nonnegative integers. 
Since $\overline{\IC\llbr w\rrbr} = \bigcup_{k\in\IN} \IC\llbr w^{1/k} \rrbr$, any finite extension of the field $\IC\llbr w\rrbr$ 
lies in $\IC\llbr w^{1/k} \rrbr$, for some $k$.

Consider a monic polynomial
\begin{equation}\label{eq:basicsplit1}
\begin{aligned}
f(w,y,z) &= f(w_1,\ldots,w_r,y_1,\ldots,y_m,z)\\
            & = z^k + a_1(w,y)z^{k-1} + a_2(w,y)z^{k-2}+\cdots + a_k(w,y)
\end{aligned}
\end{equation}
in $z$ with coefficients $a_i(w,y)$ which are regular functions at $0 \in \IC^{r+m}$ (i.e., rational functions
with nonvanishing denominators in a fixed common neighbourhood of $0$). We say that $f$ \emph{splits formally}
at a point $(w,y,z)=(w_0,y_0,0)$ (or $f$ \emph{splits} in $\IC\llb w-w_0, y-y_0\rrb [z]$, or $f$ \emph{splits over}
$\IC\llb w-w_0, y-y_0\rrb$) if $f$, considered as a formal expansion at $(w_0,y_0,0)$ (or as an expansion
in $\IC\llb w-w_0, y-y_0\rrb [z]$) factors as 
\begin{equation}\label{eq:basicsplit2}
f(w,y,z) = \prod_{j=1}^k (z - b_j(w,y)),
\end{equation}
where, for each $j$, $b_j(w,y) \in \IC\llb w-w_0, y-y_0\rrb$ and $b_j(w,y)$ vanishes when $(w-w_0, y-y_0) = (0,0)$.

Analogously, we can consider splitting in
$\overline{\IC\llbr w\rrbr}\llb y-y_0\rrb [z]$, etc.

\begin{lemma}\label{lemma:basicsplit}
Consider $f(w,y,z)$ as in \eqref{eq:basicsplit1}.
Suppose that $f$ splits formally at a point $(w,y,z)=(w_0,y_0,0)$. 
Then $f$ splits in $\overline{\IC(w)}\llb y-y_0\rrb [z]$.
\end{lemma}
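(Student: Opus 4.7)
The plan is to compare the given formal splitting with a Newton--Puiseux splitting over $\overline{\IC(w)}$, and to use uniqueness of factorization of a monic polynomial into linear factors in a common overring. First, since each $a_i(w,y)$ is regular at $(w_0,y_0)$, its Taylor expansion in $y-y_0$ has coefficients in the local ring $\IC[w]_{(w-w_0)} \subset \IC(w)$; hence $f$ lies in $\IC(w)\llb y-y_0\rrb[z]$ as a monic polynomial of degree $k$ in $z$ over a formal power series ring whose coefficient field is $\IC(w)$.

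Next, I would apply a Newton--Puiseux-type splitting over the characteristic-zero field $\IC(w)$. In the single-variable case this is the classical Newton--Puiseux theorem; in the multi-variable case it follows either from iterated univariate Newton--Puiseux or from the Soto--Vicente theorem \cite{SV}. This produces a positive integer $N$ and a factorization
$$
f(w,y,z) \;=\; \prod_{j=1}^{k} \bigl(z - \beta_j(w,y)\bigr), \qquad \beta_j \in \overline{\IC(w)}\llb (y-y_0)^{1/N}\rrb,
$$
so that the $\beta_j$ are Puiseux series in $y-y_0$ with coefficients already in $\overline{\IC(w)}$.

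Now set $L := \overline{\IC\llbr w-w_0\rrbr}\llb (y-y_0)^{1/N}\rrb$, an integral domain as a formal power series ring over a field. The formal roots $b_j \in \IC\llb w-w_0, y-y_0\rrb$ embed in $L$ through $\IC\llb w-w_0\rrb \hookrightarrow \IC\llbr w-w_0\rrbr \hookrightarrow \overline{\IC\llbr w-w_0\rrbr}$ on the coefficients together with $\llb y-y_0\rrb \hookrightarrow \llb (y-y_0)^{1/N}\rrb$; the Puiseux roots $\beta_j$ embed via $\overline{\IC(w)} \hookrightarrow \overline{\IC\llbr w-w_0\rrbr}$. Both factorizations now live in $L[z]$, and over an integral domain a monic polynomial that factors into linear factors has a uniquely determined multiset of roots, so $\{b_j\} = \{\beta_{\sigma(j)}\}$ for some permutation $\sigma$. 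Since each $b_j$ involves only nonnegative integer powers of $y-y_0$, the same is forced on the matching $\beta_{\sigma(j)}$; in particular $\beta_{\sigma(j)} \in \overline{\IC(w)}\llb y-y_0\rrb$, yielding the desired splitting of $f$ in $\overline{\IC(w)}\llb y-y_0\rrb[z]$.

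The main technical hurdle is the multivariate Newton--Puiseux input: the classical one-variable statement suffices when $m = 1$, but in several $y$-variables one must either iterate Newton--Puiseux coordinate by coordinate (keeping track of a common ramification index) or invoke the Soto--Vicente theorem directly. Once a Puiseux splitting is in hand, the comparison inside $L$ and the extraction of integer exponents from the formal hypothesis are essentially formal.
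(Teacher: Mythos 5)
Your overall strategy---produce an independent splitting with coefficients in $\overline{\IC(w)}$, embed it together with the given formal splitting into a common integral domain, and match the two multisets of roots---is legitimate and genuinely different from the paper's argument, but the key intermediate claim is false as stated once $m\geq 2$. Your production of the $\beta_j$ does not use the formal-splitting hypothesis, so it must be an unconditional fact about monic polynomials over $\IC(w)\llb y-y_0\rrb$; and it is not one: already $z^2-(y_1+y_2)$ has no root in $K\llb y_1^{1/N},y_2^{1/N}\rrb$ for any field $K\supset\IC$ and any $N$ (setting $u_i=y_i^{1/N}$, the element $u_1^N+u_2^N$ is a product of $N$ pairwise non-associate irreducible linear factors, each with multiplicity one, hence not a unit times a square). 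The correct unconditional statements have a different shape: iterated Newton--Puiseux gives roots in the iterated Laurent--Puiseux field $\overline{\IC(w)}\llbr y_m^{1/q}\rrbr\cdots\llbr y_1^{1/q}\rrbr$ (with unboundedly negative exponents in the inner variables), and the Soto--Vicente theorem gives roots in $\overline{\IC(w)}\llb y^{1/q}\rrb$ only for a monomially transformed polynomial $\psi_A(f)$, not for $f$ itself. So neither of the fixes you mention produces the $\beta_j$ in the form your comparison step assumes. The argument is repairable: take the common overring to be $L:=\overline{\IC\llbr w-w_0\rrbr}\llbr y_m^{1/q}\rrbr\cdots\llbr y_1^{1/q}\rrbr$, in which both the formal roots $b_j$ and the iterated-Puiseux roots $\beta_j$ live; match them by uniqueness of roots in a domain, and compare coefficients of the monomials $(y-y_0)^\ga$ (well defined in $L$) to conclude that every Taylor coefficient of $b_j$ is algebraic over $\IC(w)$. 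For $m=1$ your argument is complete as written.

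For comparison, the paper's proof avoids Puiseux expansions entirely: since the $a_i$ are regular, each formal root $b_j\in\IC\llb w-w_0,y-y_0\rrb$ is algebraic over $\IC[w,y]$, and algebraicity is preserved by partial differentiation in the $y$-variables and by evaluation at $y=y_0$; hence every Taylor coefficient $b_{j,\ga}\in\IC\llb w-w_0\rrb$ is algebraic over $\IC[w]$, i.e., lies in $\overline{\IC(w-w_0)}\cong\overline{\IC(w)}$. Your route, once repaired, gives an explicit alternative construction of the splitting, but at the cost of the multivariate Puiseux machinery that the paper only needs later (for Theorem \ref{thm:splitintro}), whereas the paper's route uses only the stability of algebraicity under differentiation and specialization.
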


\begin{proof}
We can assume that $y_0 = 0$. 
There is an isomorphism of $\overline{\IC(w)}$ with $\overline{\IC(w-w_0)}$ induced by the
isomorphism $w \mapsto w_0 + (w-w_0)$ of $\IC[w]$ to $\IC[w-w_0]$, so it is enough to show that 
$f$ splits in $\overline{\IC(w-w_0)}\llb y\rrb [z]$.

The roots $b_j(w,y) \in \IC\llb w-w_0, y\rrb$ are algebraic over $\IC[w-w_0,y]$. The result follows
since algebraicity is preserved by partial differentiation and by evaluation (i.e., by setting $z=0$, $y = 0$).
\end{proof}

\begin{remark}\label{rem:basicsplitAnalytic}
In the analytic case, assume that $f(w,y,z) \in \cO(W \times U)[z]$, where $W$ and $U$ are open subsets
of $\IC^r$ and $\IC^m$ (respectively). Then Lemma \ref{lemma:basicsplit} still holds, with the conclusion
$f \in \overline{\Frac(\cO(W)}\llb y-y_0\rrb [z]$, where $\Frac$ denotes the field of fractions.

Indeed, we can extend all results of this section to the analytic case by replacing the ring $\overline{\IC(w)}$ with 
$\overline{\Frac(\cO(W))}$.
\end{remark} 
 
 \begin{remark}\label{rem:basicsplit}
We will be interested in Lemmas \ref{lemma:basicsplit} and \ref{lemma:finitesplit} following
in a situation where $y = (u,x) = (u_1,\ldots, u_q, x_1,\ldots, x_{k-1})$,
$y_0 = (u_0, 0)$, the vanishing locus $\{w_1\cdots w_r =0\}$ represents an exceptional divisor, and $f(w,u,x,z)$ in $\nc(k)$ at every
point of $\{z=x=0,\, w_1\cdots w_r \neq 0\}$ (see \ref{subsec:split}).
\end{remark}

\begin{lemma}\label{lemma:finitesplit}
Consider $f(w,y,z)$ as in \eqref{eq:basicsplit1}.
Suppose that $f$ splits in $\overline{\IC(w)}\llb y-y_0\rrb [z]$. Then there is a finite and normal
extension $L$ of $\IC(w)$ in $\overline{\IC(w)}$
such that $f$ splits in $L\llb y-y_0\rrb[z]$.
\end{lemma}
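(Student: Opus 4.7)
The plan is a Galois descent argument. Set $G := \mathrm{Gal}(\overline{\IC(w)}/\IC(w))$, a profinite group acting on $\overline{\IC(w)}$. I extend this action to $\overline{\IC(w)}\llb y-y_0\rrb$ coefficient-wise on Taylor expansions; the fixed subring is precisely $\IC(w)\llb y-y_0\rrb$, which contains the coefficients of $f$ (viewed as an element of $\overline{\IC(w)}\llb y-y_0\rrb[z]$). Consequently $G$ permutes the $k$ roots $b_1,\ldots,b_k$ of $f$, yielding a homomorphism $\varphi: G \to S_k$. Set $H := \ker \varphi$, which is normal in $G$ with $[G : H] \leq k!$.

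The key step, which I expect to be the main obstacle, is to show that $H$ is closed (equivalently, in view of its finite index, open) in $G$ with respect to the Krull topology. Without this, one could only regard $G/H$ as a finite abstract-group quotient, which would not correspond to a finite field extension via the Galois correspondence. To handle this, write each root as $b_j = \sum_\alpha c_{j,\alpha}(y-y_0)^\alpha$ with $c_{j,\alpha} \in \overline{\IC(w)}$. Then $\sigma \in H$ if and only if $\sigma(c_{j,\alpha}) = c_{j,\alpha}$ for every $j$ and every multi-index $\alpha$. Each $c_{j,\alpha}$, being algebraic over $\IC(w)$, has stabilizer equal to $\mathrm{Gal}(\overline{\IC(w)}/\IC(w)(c_{j,\alpha}))$, which is an open (and hence closed) subgroup of $G$. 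Therefore $H$ is an intersection of closed subgroups, hence closed; combined with finite index, this forces $H$ to be open, since its finitely many cosets then partition $G$ into closed sets, each of which is forced to be open.

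By the fundamental correspondence of infinite Galois theory, $H = \mathrm{Gal}(\overline{\IC(w)}/L)$ for the finite normal extension $L := \overline{\IC(w)}^H$ of $\IC(w)$, with $[L : \IC(w)] = [G : H] \leq k!$. By construction every $c_{j,\alpha}$ is fixed by $H$, so $c_{j,\alpha} \in L$ for all $j,\alpha$, and hence $b_j \in L\llb y-y_0\rrb$. The factorization $f = \prod_{j=1}^k(z - b_j)$ thus takes place in $L\llb y-y_0\rrb[z]$, giving the desired splitting.
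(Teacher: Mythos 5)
Your proof is correct, and it rests on the same core observation as the paper's -- the Galois action on $\overline{\IC(w)}$ over $\IC(w)$ extends coefficient-wise to $\overline{\IC(w)}\llb y-y_0\rrb[z]$, fixes $f$, and therefore permutes the finitely many linear factors -- but the finiteness of $L$ is established by a genuinely different mechanism. You take $L$ to be the fixed field of the kernel $H$ of the permutation representation of the profinite group $G$, which obliges you to work with the Krull topology: you must check that $H$ is closed (as an intersection of the open stabilizers of the coefficients $c_{j,\alpha}$), deduce openness from finite index, and then invoke the fundamental theorem of infinite Galois theory. You correctly identified this as the crux; a finite-index abstract subgroup of a profinite group that is not closed corresponds to no field, so the step cannot be skipped. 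The paper instead defines $L$ directly as the subfield generated over $\IC(w)$ by the coefficients $b_{j,\ga}$ and avoids the profinite topology altogether: it shows $L$ is stable under $\Aut_{\IC(w)}\overline{\IC(w)}$ and normal over $\IC(w)$, and that $\Aut_{\IC(w)}L$ injects into the finite permutation group of the factors $g_j$ (an automorphism of $L$ is determined by its effect on the generators $b_{j,\ga}$, hence by where it sends the $g_j$); a normal separable extension with finite automorphism group is finite. The two fields coincide. Your route yields the explicit bound $[L:\IC(w)]\leq k!$ for free; the paper's is more elementary in its prerequisites. One cosmetic point: if $f$ has repeated roots you should pass to the distinct roots, using uniqueness of factorization $f=\prod g_j^{m_j}$ in the UFD $\overline{\IC(w)}\llb y-y_0\rrb[z]$ (as the paper does), so that the permutation action and its kernel are well defined; this does not affect your argument.
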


\begin{proof} 
We can assume that $y_0 = 0$.
By the hypothesis, $f$ splits in $\overline{\IC(w)}\llb y\rrb[z]$ as
\begin{equation}\label{eq:Wier}
f = \prod_{j=1}^{k'}g_j^{m_j},\qquad g_j(w,y,z) = z - b_j(w,y),\ j=1,\ldots,k',
\end{equation}
where $k'\leq k$, each $m_j $ is a positive integer, and the $b_j(w,y)$ are distinct
elements of $\overline{\IC(w)}\llb y\rrb$.
(The decomposition in this form is unique.) 

Consider the formal expansions
\begin{equation*}
b_j(w,y) = \sum_{\ga\in\IN^m} b_{j,\ga} y^\ga,\quad j=1,\ldots,k',
\end{equation*}
where the coefficients $b_{j,\ga}\in  \overline{\IC(w)}$.
Set $M :=  \overline{\IC(w)}$ and let $L$ denote the subfield of $M$
generated over $\IC(w)$ by the $b_{j,\ga}$, $j=1,\ldots,k'$, $\ga\in \IN^{m}$. We will show that
$L$ is a normal extension of $\overline{\IC(w)}$ with finite automorphism group, and therefore a finite extension.

First, consider $\s \in \Aut_{\IC(w)} M$, where the latter denotes the group of field automorphisms over $M$ over $\IC(w)$.
We claim that $\s L = L$; i.e., $\s$ induces an automorphism of $L$ over $\IC(w)$. Indeed, the action of $\s$ on $M$
extends to an action on $\overline{\IC(w)}\llb y\rrb[z]$ which fixes $f$ but permutes the elements $g_j$, by uniqueness
of the decomposition \eqref{eq:Wier}. Therefore, $\s$ fixes the \emph{set} $\{b_{j,\ga}: j=1,\ldots,k',\, \ga\in \IN^{m}\}$
(not the elements of this set). In other words, $\s L = L$.

We claim, moreover, that $L$ is a normal extension of $\IC(w)$; i.e., any irreducible polynomial $p(t) \in \IC(w)[t]$ which
has a root $a_1$ in $L$, splits in $L$. First of all, the algebraic closure $M = \overline{\IC(w)}$ is trivially a normal extension
of $\IC(w)$, so that $\IC(w)$ is the fixed point set of $\Aut_{\IC(w)} M$, by the fundamental theorem of Galois theory.
Now, $\Aut_{\IC(w)} M$ maps to a subgroup $S$ of the permutation group of the roots of $p(t)$, and $\prod_{\tau\in S}(t-\tau(a_1))$
is fixed by $\Aut_{\IC(w)} M$, so it is a polynomial over $\IC(w)$. This polynomial cannot be a nontrivial factor of $p$
because $p$ is irreducible, so we get the claim.

Now consider $\s \in \Aut_{\IC(w)} L$. As above, $\s$ induces a permutation of the $g_j$.
Moreover, $\s$ is determined by its action on $\{b_{j,\ga}\}$, and therefore by its action on the $g_j$; i.e.,
$\Aut_{\IC(w)} L$ embeds as a subgroup of the finite group of permutations of  $\{g_j\}$, as required.
\end{proof}

\begin{lemma}\label{cor:finitesplit}
Assume that $w$ is a single variable.
\begin{enumerate}
\item With the hypotheses of Lemma \ref{lemma:finitesplit},
$f(w,y-y_0,z)$ splits in $\IC(w^{1/p})\llb y-y_0\rrb[z]$, for some $p$.
\item Suppose $f(w,y,z)$ is a monic polynomial \eqref{eq:basicsplit1} in $\IC\llb w,y\rrb[z]$ which splits in $\IC(w^{1/p})\llb y-y_0\rrb[z]$.
Then $f = \prod_{i=1}^l f_i$, where each $f_i \in \IC(w)\llb y-y_0\rrb[z]$ is an irreducible monic polynomial in $z$ of degree $k_i$,
which splits in $\IC(w^{1/k_i})\llb y-y_0\rrb[z]$,
and $k_1 +\cdots + k_l = k$.
\end{enumerate}
\end{lemma}

\begin{proof}
(1) is an immediate consequence of Lemma \ref{lemma:finitesplit}. 
For (2), we can again assume that $y_0 = 0$. Write
$$
f(v^{p},y,z) = \prod_{j=1}^k \left(z-b_j(v,y)\right),
$$
where each $b_j \in \IC(v)\llb y\rrb$. The $p$th roots of unity $e^{2\pi il/p}$, $l=1,\ldots,p$, have the structure of a
cyclic group $\IZ_{p}$. Let $\ep = e^{2\pi i/{p}}$. Then the ordered set $\{b_j(\ep v,y)\}$ is a permutation of the set of 
roots $\{b_j(v,y)\}$; say,
$b_j(\ep v,y) = b_{s(j)}(v,y)$. Then $b_1(\ep^2 v,y) = b_{s(1)} (\ep v,y) = b_{s^2(1)} (v,y)$, and $b_1(\ep^l v,y) = b_{s^l(1)}(v,y)$,
for all $l$. So there is a homomorphism of $\IZ_{p}$ onto a cyclic subgroup $\IZ_m$ of the group of permutations of the roots $b_j$,
for some $m\leq k$.

Then, after reordering, $\prod_{j=1}^m \left(z-b_j(v,y)\right)$ is
invariant under the action of $\IZ_{p}$ and, therefore, an element of $\IC(v^{p})\llb y\rrb[z]$. If $m<k$, this means
that $f(w,y,z)$ is not irreducible in $\IC(w)\llb y\rrb[z]$.

We can assume that $f(w,y,z)$ is irreducible in $\IC(w)\llb y\rrb[z]$. Then $m=k$. 
In general, the group of homomorphisms
$\IZ_p \to \IZ_m$  is isomorphic to $\IZ_d$, where $d = \gcd(p,m)$.
Any $h\in\IZ_d$ corresponds to the homomorphism $\IZ_p \to \IZ_m$ given by $\ep \mapsto \ep^{hp/d}$.
If $f$ is irreducible, then $m=k$ and $\IZ_p \to \IZ_k$ is onto, so that $d=k$ and $f(v^k,y,z)$ splits as required.
\end{proof}

\begin{remark}\label{rem:finitesplit}
Likewise, if $w$ is a single variable and $f(w,y,z)$ is a monic polynomial \eqref{eq:basicsplit1} in $\IC\llb w,y\rrb[z]$ which splits in 
$\IC\llb w^{1/{p}},y\rrb[z]$,
then $f = \prod_{i=1}^l f_i$, where each $f_i \in \IC\llb w, y\rrb[z]$ is an irreducible monic polynomial in $z$ of degree $k_i$,
which splits in $\IC\llb w^{1/k_i}, y\rrb[z]$, and $k_1 +\cdots + k_l = k$.

Note that the polynomial $f(w,x,z) = z^2 + (w^2 + x)x^2$ is irreducible in $\IC\llb w,x\rrb[z]$ but not in
$\IC(w)\llb x\rrb[z]$.
\end{remark}

\subsection{Generic normal crossings and the discriminant}\label{subsec:disc}
Let $X \hookrightarrow Z$ denote an embedded hypersurface ($Z$ smooth).
For any $k\in \IN$, $\{x\in X: X \text{ is } \nc(k) \text{ at } x\}$ is a smooth subspace of $X$ of codimension
$k$ in $Z$.

\begin{lemma}\label{lemma:genericnc}
The set of non-normal crossings points of $X$ is a closed algebraic (or analytic) subset.
If $Y$ is an irreducible subset of $X$ and $X$ is generically $\nc(k)$ on $Y$, for some $k\in \IN$, then
$\{x\in Y: X \text{ is not } \nc(k) \text{ at } x\}$ is a proper closed algebraic (or analytic) subset of $Y$.
\end{lemma}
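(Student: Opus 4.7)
The plan is to deduce both statements from the \etale-local structure of normal crossings, combined with an upper semicontinuity property of the $\nc$ order on $X^{\nc}$.

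First, I would show that $X^{\nc}$ is open in $X$ (equivalently, that the non-$\nc$ locus is closed) by a direct computation at a point $a \in X^{\nc}$. Choose \etale (or local analytic) coordinates $(x_1,\ldots,x_n)$ on a neighbourhood $V$ of $a$ in $Z$ such that $X \cap V = \{x_1 \cdots x_k = 0\}$, where $k = \ord_a X$. For any $b \in X \cap V$, the set $I_b := \{i : x_i(b) = 0\}$ is nonempty, and the factors $x_i$ with $i \notin I_b$ are units near $b$, so $X$ is defined near $b$ by $\prod_{i \in I_b} x_i = 0$, a normal crossings equation of order $|I_b| \le k$. Hence $X \cap V \subset X^{\nc}$, so $X^{\nc}$ is open, and simultaneously $\ord_y X$ is upper semicontinuous on $X^{\nc}$; in particular $\Sigma_j := \{y \in X^{\nc} : \ord_y X \ge j\}$ is closed in $X^{\nc}$ for every $j$. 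The algebraicity (respectively local analyticity) of the non-$\nc$ locus is inherited from the local normal forms.

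For the second statement, let $Y \subset X$ be irreducible with generic $\nc(k)$, and set $Y_0 := Y \cap X^{\nc}$, which is open in $Y$ and hence irreducible. The hypothesis provides a dense subset of $Y_0$ on which $\ord_y X = k$, so the closed set $Y_0 \cap \Sigma_k$ equals $Y_0$ by irreducibility, giving $\ord_y X \ge k$ for every $y \in Y_0$. The non-$\nc(k)$ locus in $Y$ then decomposes as
\begin{equation*}
 (Y \setminus Y_0) \cup (Y_0 \cap \Sigma_{k+1}),
\end{equation*}
where the first piece is closed in $Y$ (complement of an open set) and the second is closed in $Y_0$. Any limit point $y$ of the union that lies in $Y_0$ must, by openness of $Y_0$, be a limit of points in $Y_0 \cap \Sigma_{k+1}$ itself, so it belongs to this closed-in-$Y_0$ set. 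Properness follows from nonemptiness of the generic $\nc(k)$ locus.

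The only subtlety, which is the reason openness of $X^{\nc}$ must be invoked twice, is that $\Sigma_{k+1}$ is \emph{not} in general closed in $X$ — at a pinch point, for example, $\nc(2)$ points accumulate to a non-$\nc$ point — so one cannot simply intersect $Y$ with a closed subset of $X$. Apart from this gluing issue, the argument is routine semicontinuity bookkeeping once the \etale-local normal form is in hand, so I do not foresee a genuine obstacle.
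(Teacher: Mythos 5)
Your topological argument is sound and is a genuinely different route from the paper's: you work directly from the \'etale-local normal form to get openness of $X^{\nc}$ and upper semicontinuity of the nc order on $X^{\nc}$, whereas the paper deduces everything from two facts about the desingularization invariant $\inv$ (Zariski upper semicontinuity, and the characterization of $\nc(k)$ points as those where $\inv = \inv(\nc(k))$ \emph{and} the normalization has exactly $k$ points in the fibre, citing \cite[Thm.\,3.4]{BDMV}). The decomposition of the non-$\nc(k)$ locus in $Y$ as $(Y\setminus Y_0)\cup(Y_0\cap\Sigma_{k+1})$ and the gluing argument are correct as stated.

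However, there is a genuine gap in the first part: the lemma asserts that the non-nc locus is a closed \emph{algebraic} (or \emph{analytic}) subset, and your argument only establishes that it is closed in the Euclidean (\'etale) topology. The sentence ``the algebraicity \ldots is inherited from the local normal forms'' does not close this gap, because the normal forms exist only at nc points; they provide no equations for the non-nc locus in a neighbourhood of one of its own points, which is exactly what local analyticity requires (and, in the algebraic category, a Euclidean-closed set need not be Zariski closed unless one also knows it is constructible). This is not a cosmetic point: the paper needs the locus to be algebraic/analytic in order to resolve it and use it as a centre of blowing up. The standard way to supply the missing constructibility is precisely the paper's route --- the locus $\{\inv = \inv(\nc(k))\}$ is locally closed algebraic by upper semicontinuity of $\inv$, and the fibre-count of the (finite) normalization morphism is constructible, so the nc locus is constructible; combined with your openness argument this yields Zariski closedness of the complement. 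Note also that your second part quietly relies on this: you need $Y_0 = Y\cap X^{\nc}$ to be Zariski open (not just Euclidean open) for it to be irreducible and dense in $Y$, and you need $\Sigma_{k+1}$ to be cut out algebraically (e.g., as $X^{\nc}\cap\{\ord_x X\ge k+1\}$, using Zariski upper semicontinuity of the order on all of $X$) for the union to be a closed algebraic subset of $Y$.
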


\begin{proof}
This is a simple consequence of the following two facts. (1) The desingularization invariant $\inv = \inv_{X}$ 
(in year zero)
is Zariski upper-semicontinuous on $X$. (2) $X$ is $\nc(k)$ at a point $a$ if
and only if $\inv_X(a) = (k,0,1,0,\ldots,1,0,\infty)$ (with $k$ pairs) and $X$ has $k$ local analytic branches at $a$
(equivalently, there are precisely $k$ points in the fibre of the normalization of $X$ over $a$; see \cite[Thm.\,3.4]{BDMV}).
\end{proof}

Lemmas \ref{lemma:disc} and \ref{lemma:Dsq} following deal with the question of splitting in terms of the
discriminant. These results will be used in Section \ref{sec:triplenc}.
Lemma \ref{lemma:Dsq} in the case $k=3$ was proved in \cite[Lemmas 3.4,\,3.5]{BLMmin2},
but the general proof below is much simpler. 

Let $f$ denote a regular function, written in
\'etale local coordinates
\begin{equation*}
(w,u,x,z) = (w_1,\ldots,w_r, u_1,\ldots,u_q, x_1,\ldots,x_{k-1},z)
\end{equation*}
as
\begin{equation}\label{eq:weier1}
f(w,u,x,z) = z^k + a_1(w,u,x)z^{k-1} +\cdots + a_k(w,u,x).
\end{equation}
Let $D(w,u,x)$ denote the discriminant of $f(w,u,x,z)$ as a polynomial in $z$. The discriminant $D$ is
a weighted homogeneous polynomial of degree $k(k-1)$ in the coefficients $a_i$, 
where each $a_i$ has weight $i$.

\begin{lemma}\label{lemma:disc}
Assume that $f$ is in the ideal generated by $x_1,\ldots,x_{k-1},z$, and that $f$ splits formally (into $k$ factors of order $1$)
at every point where $x=z=0$ and $w_1\cdots w_r \neq 0$.
Then $D$ factors in an \'etale neighbourhood of $a=0$ as
\begin{equation}\label{eq:D}
D = \Phi^2 \Psi,
\end{equation}
where $\Phi$ is in the ideal generated by $x_1,\ldots,x_{k-1}$, and
$\Psi$ is nonvanishing outside $\{w_1\cdots w_r = 0\}$.
\end{lemma}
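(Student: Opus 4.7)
The plan is divisorial: to show that in the factorial étale-local ring $\cO_{U_0,0}$, the multiplicity of $D$ along every irreducible divisor not contained in $\{w_1\cdots w_r=0\}$ is even, and then to assemble $\Phi$ and $\Psi$ accordingly. First I would establish that each coefficient $a_i$ lies in $(x_1,\ldots,x_{k-1})$. At any $p=(w_0,u_0,0,0)$ with $w_1^0\cdots w_r^0\neq 0$, the hypothesis produces a factorization $f=\prod_{j=1}^k(z-b_j^{(p)})$ in $\cO_{U_0,p}^{\wedge}[z]$ with each $b_j^{(p)}$ in the maximal ideal; in particular $b_j^{(p)}(p)=0$, hence $a_i(w_0,u_0,0)=\sigma_i(0,\ldots,0)=0$ for every $i$. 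Since the stratum $\{x=0,\,w\neq 0\}$ is Zariski-dense in $\{x=0\}$, we obtain $a_i(w,u,0)\equiv 0$, i.e., $a_i\in(x_1,\ldots,x_{k-1})$. Feeding this back, $f(w,u,0,z)=z^k$, so $\prod_j(z-b_j^{(p)}(w,u,0))=z^k$, whence $b_j^{(p)}\in(x_1,\ldots,x_{k-1})\cdot\cO_{U_0,p}^{\wedge}$ for every $j$.

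Next, define $\Phi^{(p)}:=\prod_{i<j}(b_i^{(p)}-b_j^{(p)})\in(x_1,\ldots,x_{k-1})\cdot\cO_{U_0,p}^{\wedge}$; then $D=(\Phi^{(p)})^2$ in the completion. Consequently $v_V(D)$ is even for every irreducible divisor $V$ that meets $\{x=0,\,w\neq 0\}$ at a smooth point of $V$. The heart of the argument is to extend this evenness to every irreducible $V$ with $V\not\subseteq\{w_1\cdots w_r=0\}$, including those that do not meet the stratum. For such a $V$, I would invoke Lemma~\ref{lemma:basicsplit} to split $f$ over $\overline{\IC(w)}\llb u,x\rrb$ and Lemma~\ref{lemma:finitesplit} to pass to a finite Galois extension $L$ of $\IC(w)$; then $\tilde\Phi:=\prod_{i<j}(b_i-b_j)\in L\llb u,x\rrb$ satisfies $D=\tilde\Phi^2$. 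Since $L/\IC(w)$ is unramified along any irreducible divisor whose generic point does not lie above a zero of some $w_j$, one obtains $v_V(D)=2\,v_V(\tilde\Phi)$, again even.

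Finally, in the étale-local UFD $\cO_{U_0,0}$, factor $D=u\prod_V\pi_V^{m_V}$ with $\pi_V$ local equations of the irreducible divisors and $u$ a unit, and set
$$
\Phi:=\prod_{V\not\subseteq\{w_1\cdots w_r=0\}}\pi_V^{m_V/2},\qquad \Psi:=D/\Phi^2.
$$
All $m_V$ with $V\not\subseteq\{w_1\cdots w_r=0\}$ are even by the preceding step, so $\Phi\in\cO_{U_0,0}$; and $\Psi$ is, up to a unit, a monomial in the $w_j$'s, hence nonvanishing outside $\{w_1\cdots w_r=0\}$. The containment $\Phi\in(x_1,\ldots,x_{k-1})$ follows from the observation that $f(w,u,0,z)=z^k$ forces $D(w,u,0)=0$ while $\Psi(w,u,0)\neq 0$ on $\{w_1\cdots w_r\neq 0\}$, so $\Phi(w,u,0)=0$ on a Zariski-dense subset of $\{x=0\}$, and therefore identically.

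The principal obstacle I anticipate lies in the second paragraph, namely propagating the local-square property of $D$ from the stratum $\{x=z=0,\,w\neq 0\}$ to divisors $V$ of $V(D)$ that do not meet this stratum: the hypothesis only directly controls $D$ near the stratum, and the Galois-theoretic argument via Lemma~\ref{lemma:finitesplit} must be supplemented by a careful analysis of the ramification of the splitting field $L/\IC(w)$ along such $V$.
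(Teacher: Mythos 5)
Your first paragraph and the first half of your second paragraph are exactly the paper's argument: at each stratum point $p$ the splitting gives $D=\bigl(\prod_{i<j}(b_i^{(p)}-b_j^{(p)})\bigr)^2$, so $D$ is a square \'etale-locally along $\{x=z=0,\ w_1\cdots w_r\neq 0\}$, hence every irreducible factor of $D$ whose zero set meets that stratum occurs to even multiplicity; the paper's proof consists of precisely this observation. The genuine problem is the one you flag yourself, and your proposed repair cannot work. For a divisor $V$ with $V\not\subseteq\{w_1\cdots w_r=0\}$ that misses the stratum, say $V=\{x_1-w_1=0\}$, the local equation has nonzero constant term $-w_1\in L^\times$ when viewed in $L\llb u,x\rrb$ and is therefore a \emph{unit} there; the factorization $D=\tilde\Phi^2$ in $L\llb u,x\rrb$ carries no information about $v_V(D)$, and no analysis of the ramification of $L/\IC(w)$ can recover it. Worse, the evenness you are trying to establish is false. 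Take $k=2$, $r=1$, $q=0$ and $f=z^2-\tfrac14x_1^2(x_1-w)$: then $f\in(x_1,z)$, and at every point of $\{x_1=z=0,\ w\neq0\}$ the element $x_1-w$ is a unit with a formal square root, so $f=(z-\tfrac12x_1\sqrt{x_1-w})(z+\tfrac12x_1\sqrt{x_1-w})$ splits into two factors of order one (indeed $f$ is $\nc(2)$ there); yet $D=x_1^2(x_1-w)$ carries the irreducible factor $x_1-w$ to odd multiplicity, and $x_1-w$ vanishes at points off $\{w=0\}$.

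The way out is that the conclusion must be read as it is actually used in the proof of Lemma \ref{lemma:Dsq}: what is needed, and what is true, is that $\Psi$ is nonvanishing at the points of $\{x=0\}$ outside $\{w_1\cdots w_r=0\}$, i.e., $\Psi|_{x=0}$ is a unit times a monomial in $w$ --- not that $V(\Psi)\subseteq\{w_1\cdots w_r=0\}$. With this reading your assembly step should be modified: put into $\Phi$ the half-powers of those irreducible factors whose zero sets meet the stratum (these have even multiplicity by the first part of your argument), and put everything else into $\Psi$ --- both the factors supported in $\{w=0\}$ and the factors missing the stratum, to whatever multiplicity they occur. Your closing argument that $\Phi\in(x_1,\ldots,x_{k-1})$ then goes through verbatim, since $\Psi$ is still generically nonzero on $\{x=0\}$ while $D|_{x=0}=0$. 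In the example above this yields $\Phi=x_1$ and $\Psi=x_1-w$.
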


\begin{proof}
The hypotheses imply that $D$ is a square (\'etale locally) at every point where $x=z=0$
and $w_1\cdots w_r \neq 0$.
So there is an \'etale neighbourhood of $a$ in which every irreducible factor of $D$ occurs to even 
power, except for those factors which are nonvanishing outside $\{w_1\cdots w_r = 0\}$.
\end{proof}

\begin{lemma}\label{lemma:Dsq}
Assume that $f$ satisfies the hypotheses of Lemma \ref{lemma:disc} and that $D$ factors in an \'etale 
coordinate neighbourhood of $a=0$ as in \eqref{eq:D}. Then,
after a finite number of blowings-up with centres of the form $\{z=x=w_j=0\}$, for some $j$, we can assume
that $D(v_1^2,\ldots, v_r^2, u, x)$ is a square.
\end{lemma}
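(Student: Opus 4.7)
The plan is to exploit the structural factorization of $\Psi$ forced by its support hypothesis, together with the fact that $\IC$ is algebraically closed, to deduce that $D(u, v_1^2, \ldots, v_r^2, x)$ is a square essentially ``for free''. The role of the blowings-up is then to ensure the \'etale-local factorization of Lemma \ref{lemma:disc} holds in a form compatible with the global framework.

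First I would analyze $\Psi$. In the (regular, hence UFD) \'etale local ring at $a = 0$, any irreducible factor $p$ of $\Psi$ generates a height-one prime whose vanishing locus is an irreducible codimension-one subvariety of $V(w_1\cdots w_r) = V(w_1) \cup \cdots \cup V(w_r)$, hence coincides with some $V(w_j)$. Thus $p$ is a unit multiple of $w_j$, and one can write
$$\Psi = U \cdot w_1^{\alpha_1} \cdots w_r^{\alpha_r}$$
for some unit $U$ and nonnegative integers $\alpha_1, \ldots, \alpha_r$. After the substitution $w_j = v_j^2$, the monomial part becomes the square $v_1^{2\alpha_1}\cdots v_r^{2\alpha_r}$.

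Next I would produce a square root of the unit $U$. Writing $U = U(0)(1 + h)$ with $h$ in the maximal ideal, $\sqrt{U(0)}$ exists in $\IC$ because $\IC$ is algebraically closed, while $\sqrt{1+h}$ is given by the formal binomial series. Thus $U$ admits a square root $V$ in the completion of the \'etale local ring, and
$$D(u, v^2, x) = \Phi(u, v^2, x)^2 \cdot V(u, v^2, x)^2 \cdot v_1^{2\alpha_1} \cdots v_r^{2\alpha_r} = \bigl[\Phi(u, v^2, x)\, V(u, v^2, x)\, v_1^{\alpha_1}\cdots v_r^{\alpha_r}\bigr]^2$$
is a perfect square.

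The main obstacle is to justify that this local/formal argument yields the conclusion in the stated form, i.e.\ after the admissible sequence of blowings-up rather than merely in the henselization. The blowings-up with centres $\{z = x = w_j = 0\}$ enter here to arrange the factorization of Lemma \ref{lemma:disc} uniformly on a small Zariski neighbourhood. The key tool is the weighted homogeneity of the discriminant, which gives the transformation rule $\tilde D(w, u, x) = w_j^{-k(k-1)} D(w, u, w_j x)$ in the $w_j$-chart; tracking the combined effect of this rule on $\Phi$ (through $\ord_x\Phi$) and on the monomial exponents $\alpha_j$ yields an invariant that strictly decreases with each blowing-up, guaranteeing termination after finitely many steps.
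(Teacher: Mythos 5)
Your argument hinges on the claim that $\Psi = U\cdot w_1^{\alpha_1}\cdots w_r^{\alpha_r}$ with $U$ a unit, deduced by reading ``$\Psi$ is nonvanishing outside $\{w_1\cdots w_r=0\}$'' as $V(\Psi)\subseteq V(w_1\cdots w_r)$. That is not what Lemma \ref{lemma:disc} provides: the generic splitting hypothesis only forces $D$ to be a square \'etale-locally at points of $\{x=z=0,\ w_1\cdots w_r\neq 0\}$, so the odd-multiplicity part $\Psi$ is only known to be nonvanishing on $\{x=0\}\setminus\{w_1\cdots w_r=0\}$, i.e.\ it is $\Psi|_{x=0}$ that equals a unit times a monomial $w^{\alpha}$. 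In general $\Psi = w^{\alpha}\eta(u,w) + x_1\theta_1+\cdots+x_{k-1}\theta_{k-1}$, which is not a unit times a monomial. A concrete counterexample to your factorization: $f = z^2+(w+x)x^2$ is $\nc(2)$ on $\{z=x=0,\,w\neq0\}$, with $D=-4x^2(w+x)$ and $\Psi=w+x$, whose zero set is certainly not contained in $\{w=0\}$; here $D(v^2,x)=-4x^2(v^2+x)$ is \emph{not} a square, so the conclusion genuinely fails before blowing up. Your proposal would declare the lemma true ``for free,'' and the role you then assign to the blowings-up (globalizing an already-complete local statement, or re-establishing Lemma \ref{lemma:disc}) is not the actual obstruction; the conclusion is an \'etale-local statement to begin with.

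The missing content is precisely what the blowings-up accomplish. With centre $\{z=x=w_j=0\}$ and $\alpha_j\geq 1$, the substitution $x\mapsto w_jx$ multiplies the $(x)$-part of $\Psi$ by $w_j$, so that $\Psi\circ\sigma = w_j\bigl(w^{\alpha-e_j}\eta' + \theta'\bigr)$ with $\theta'\in(x)$: one factor of $w_j$ is extracted as an honest monomial factor of $D$ and the exponent vector $\alpha$ of the residual part drops by $e_j$. After $\alpha_1+\cdots+\alpha_r$ such blowings-up one reaches $\alpha=0$, where $\Psi=\eta+\theta$ with $\eta$ a unit and $\theta\in(x)$ is itself a unit at the origin; then $D=\Phi^2\cdot w^{\alpha}\cdot(\text{unit})$, and substituting $w_j=v_j^2$ turns $w^{\alpha}$ into $v^{2\alpha}$ while the unit has a square root over $\IC$ (the one part of your write-up that is correct and is also used in the paper). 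Your closing paragraph gestures at a decreasing invariant but neither identifies it (it is $|\alpha|$ for $\Psi|_{x=0}$, not $\ord_x\Phi$) nor verifies the transformation rule, so the argument as written does not close.
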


\begin{proof}
We can assume that $a_1 = 0$ in \eqref{eq:weier1} (by completing the $k$th power).
According to Lemma \ref{lemma:disc},
$$
\Psi(u,w,x) = \xi(u,w) +  x_1\theta_1(u,w,x) +\cdots + x_{k-1}\theta_{k-1}(u,w,x),
$$
where $\xi(u,w)$ does not vanish outside $\{w_1\cdots w_r = 0\}$; i.e., the zero set of $\xi$ is a
subset of $\{w_1\cdots w_r = 0\}$, so that $\xi(u,w) = w^\al \eta(u,w)$, where 
$w^\al = w_1^{\al_1}\cdots w_r^{\al_r}$
is a monomial and $\eta(u,w)$ is a unit. If $\al = 0$, then $D$ is already a square, so we can
assume that $\al\neq 0$.

Consider the blowing-up $\s$ with centre $\{z = x = w_j = 0\}$, for some $j$ such that
$\al_j \neq 0$. The subspace $\{z=x=0\}$ lifts to the $w_j$-chart of $\s$, given by substituting 
$(w,u,w_j x,w_j z)$ for $(w,u,x,z)$, and we have
\begin{align*}
f'(w,u,x,z) &:= w_j^{-k} f(w,u,w_jx,w_jz)\\
            &= z^k + a'_2(w,u,x)z^{k-2} + \cdots a'_k(w,u,x),  
\end{align*}
where each $a'_i(w,u,x) = w_j^{-i} a_i(w,u,w_jx)$. Since $D \in (x)^{k(k-1)}$, 
$f'(w,u,x,z)$ has discriminant
\begin{equation*}
D' = w_j^{-k(k-1)} D\circ\s = (\Phi')^2\cdot \Psi\circ\s,
\end{equation*}
and
\begin{equation*}
(\Psi\circ\s)(w,u,x) = w_j\left(w^{\al'}\eta'(w) + \theta'(w,x)\right),
\end{equation*}
where $w^{\al'} = w_1^{\al_1}\cdots w_j^{\al_j-1}\cdots w_r^{\al_r}$, 
$\eta'$ is a unit and $\theta' \in (x)$.

It follows that, after $\al_1 + \cdots + \al_r$ blowings-up with centres of the form
$\{z=x=w_j = 0\}$, for some $j$, $D(v_1^2,\ldots, v_r^2, u,x)$ is a square.
\end{proof}

\begin{remark}\label{rem:Dsq}
In Section \ref{sec:alg}, we will deal with an embedded hypersurface $X\hookrightarrow Z$ together with
a simple normal crossings divisor $E$, and will need to apply Lemma \ref{lemma:Dsq} and Theorem \ref{thm:splitintro}
(proved in \S\ref{subsec:SV} following) to a function $g(y_1,\ldots,y_r, w,u,x,z) = y_1\cdots y_r f(y,w,u,x,z)$, where the $y_j$
are local generators of the components of $E$, $f$ is as in \eqref{eq:weier1} with coefficients $a_i = a_i(y,w,u,x)$, and
$f$ satisfies the hypotheses of Lemma \ref{lemma:Dsq}. The latter holds in this case with centres $\{z=x=y=w_j=0\}$, 
and the proof is the same.
\end{remark}

\subsection{Limits of $\nc(k)$ in $k+1$ variables}\label{subsec:SV}
In this subsection, we prove Theorem \ref{thm:splitintro}.
See also Lemma \ref{cor:finitesplit} and Remark \ref{rem:finitesplit}. The statement
of Theorem \ref{thm:splitintro} means, more precisely, 
that, after finitely many blowings-up of $0$, the strict transform of $f$
splits at the inverse image of $0$ in the lifting of the $w$-axis $\{x=z=0\}$. Of course, after blowing up
$0$, the $w$-axis lifts to the $w$-axis in coordinates of the $w$-chart, given by $(w,wx,wz)$.

\begin{proof}[Proof of Theorem \ref{thm:splitintro}]
Let us change notation and write $x = (x_1,\ldots, x_{k-1}, x_k)$, where $x_k = w$. Given any 
field $K$, we write $K\llbr t^{1/q}\rrbr$ to denote the field of Puiseux Laurent series in $t^{1/q}$, 
where $q$ is a positive integer.

Let $\SLplus(k,\IZ)$ denote the multiplicative subsemigroup of $\SL(k,\IZ)$ consisting of 
upper-triangular matrices
$$
A = \left(\begin{matrix}
1 & a_{12} & \cdots & a_{1k}\\
0 & 1         & \cdots & a_{2k}\\
\vdots  & \vdots  & \ddots & \vdots\\
0 & 0         & \cdots & 1
\end{matrix}\right)\,,
$$
where the $a_{ij}$ are nonnegative integers. Clearly, $\SLplus(k,\IZ)$ acts on monomials
$x^\al = x_1^{\al_1}\cdots x_k^{\al_k}$ by $x^\al \mapsto x^{\al A}$, $A \in \SLplus(k,\IZ)$, where
$$
\al A := (\al_1,\ldots,\al_k)\cdot\left(\begin{matrix}
1 & a_{12} & \cdots & a_{1k}\\
0 & 1         & \cdots & a_{2k}\\
\vdots  & \vdots  & \ddots & \vdots\\
0 & 0         & \cdots & 1
\end{matrix}\right)\,.
$$
Write $\psi_A(x^\al) := x^{\al A}$. Of course, $\psi_A$ extends to an operation on
$\IC\llb x \rrb = \IC\llb x_1,\ldots, x_k\rrb$, and to an operation on $\IC\llb x\rrb [z]$ (by the
preceding operation on coefficients), which we also denote $\psi_A$, in each case.

Since $\psi_A$ takes $x_k = w \mapsto w$ and (for each $i=1,\ldots,k-1$) takes $x_i \mapsto x_i$
times a monomial in $(x_{i+1},\ldots,x_{k-1}, w)$ (the monomial with exponents given by the $i$th row
of $A$), we see that $\psi_A$ also makes sense as an operation on $\overline{\IC(w)}\llb x_1,\ldots,x_{k-1}\rrb$,
or on $\overline{\IC\llbr w\rrbr}\llb x_1,\ldots,x_{k-1}\rrb$.

By the theorem of Soto and Vicente \cite{SV}, there exists a positive integer $q$ such that $f$ splits
in $\IC\llbr x_k^{1/q}\rrbr \cdots \llbr x_1^{1/q}\rrbr [z]$ and, moreover, there exists $A \in \SLplus(k,\IZ)$ such
that $\psi_A(f)$ splits in $\IC\llb x_1^{1/q},\ldots, x_k^{1/q}\rrb [z]$. Let $c_i \in \IC\llb x_1^{1/q},\ldots, x_{k-1}^{1/q}, w^{1/q}\rrb$,
$i=1,\ldots,k$, denote the roots of $\psi_A(f)$.

By Lemma \ref{lemma:basicsplit}, $f$ splits in $\overline{\IC(w)}\llb x_1,\ldots, x_{k-1}\rrb [z]$. Let $b_i \in \overline{\IC(w)}\llb x_1,\ldots, x_{k-1}\rrb
\subset \overline{\IC\llbr w\rrbr}\llb x_1,\ldots, x_{k-1}\rrb$, $i=1,\ldots,k$, denote the roots of $f$.

By the uniqueness of formal expansion, the set $\{c_i\}$ of roots of $\psi_A(f)$ coincides with the set $\{\psi_A(b_i)\}$;
i.e., each $c_i \in \IC \llb x_1,\ldots, x_{k-1}, w^{1/q}\rrb$.

Note that, given any monomial $x_1^{\al_1}\cdots x_{k-1}^{\al_{k-1}}$, $w=x_k$ appears in 
$\psi_A(x_1^{\al_1}\cdots x_{k-1}^{\al_{k-1}})$
to the power $\sum_{j=1}^{k-1} \al_j a_{jk}$; i.e., to a power at most $d\mu$, where $d$ is the degree $\al_1 +\cdots +\al_{k-1}$ and
$\mu = \max\{a_{jk}\}$.

It follows that blowing up the origin $\mu$ times will clear all denominators in the roots $b_i$.
\end{proof}

\subsection{Normality}\label{subsec:norm}
The purpose of this subsection is to give an elementary proof that circulant singularities have
smooth normalization (Proposition \ref{prop:circ}), and that, 
in Theorem \ref{thm:splitintro}, we get smooth 
normalization after finitely many blowings-up of the origin (see Corollary \ref{cor:lairez}).
As usual, $\IC$ can be replaced by any algebraically closed field in the results following.

\begin{proposition}\label{prop:lairez}
Let $f \in \IC\llb w,x\rrb[z] = \IC\llb w,x_1,\ldots,x_n\rrb[z]$ denote an irreducible monic polynomial
of degree $k$ in $z$ with coefficients in $\IC\llb w,x\rrb$; i.e.,
\begin{equation*}
f(w,x,z) = z^k + a_1(w,x)z^{k-1} + \cdots + a_k(w,x),
\end{equation*}
where each $a_i(w,x)\in \IC\llb w,y\rrb$. Let $R := \IC\llb w,x\rrb[z]/(f)$, and let $R'$ denote the integral closure of $R$ in its field of fractions $\Frac(R)$.
Then the following are equivalent:
\begin{enumerate}
\item[(1)] $f(w,x,z)$ splits into $k$ factors in $\IC\llb w^{1/k}, x\rrb[z]$;
\item[(2)] There exists $u\in R'$ such that $u^k=w$.
\end{enumerate}
\end{proposition}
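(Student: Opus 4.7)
My plan is to identify the integral closure $R'$ with $B := \IC\llb w^{1/k}, x\rrb$ in both directions of the equivalence. Set $A := \IC\llb w, x\rrb$. I would first establish the framework: $A$ and $B$ are regular local rings (hence UFDs, and in particular normal); $B$ is a free $A$-module of rank $k$ with basis $1, w^{1/k}, \dots, w^{(k-1)/k}$; and $\Frac B/\Frac A$ is a cyclic Galois extension of degree $k$ with Galois group generated by $\s: w^{1/k}\mapsto \ep\cdot w^{1/k}$, where $\ep = e^{2\pi i/k}$. I would then record two irreducibility facts: $f$ is irreducible in $\Frac A[z]$ (Gauss's lemma for the UFD $A$), so $[\Frac R : \Frac A] = k$; and $t^k - w$ is irreducible in $A[t]$, since $w$ is prime in $A$ and not a $p$-th power for any prime $p$ (via Capelli's theorem, or an elementary coprimality/order argument).

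For direction $(1)\Rightarrow(2)$, given a splitting $f = \prod_{j=1}^k(z-\zeta_j)$ in $B[z]$, I would form the $A$-algebra map $R \to B$ sending $z\mapsto \zeta_1$. Injectivity follows from $f$ being the minimal polynomial of $\zeta_1$ over $\Frac A$, and a dimension count then yields $\Frac R = \Frac B$. Since $B$ is normal and finite over $R$, a double inclusion (every element of $B$ is integral over $R$; every element of $R'\subset\Frac B$ is integral over $A$ and hence lies in the normal ring $B$) identifies $R' = B$. Then $u := w^{1/k} \in B = R'$ satisfies $u^k = w$.

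For direction $(2)\Rightarrow(1)$, I would first argue that $A[u] \cong A[t]/(t^k - w) \cong B$ as $A$-algebras, using irreducibility of $t^k - w$ over $A$. The inclusion $A[u] \subseteq R'$ combined with $[\Frac A[u]:\Frac A] = k = [\Frac R:\Frac A]$ forces $\Frac A[u] = \Frac R$, and normality of $A[u]\cong B$ then gives $R' = A[u] \cong B$. The image $\zeta_1\in B$ of $z\in R\subset R'$ is one root of $f$, and the Galois conjugates $\s^j(\zeta_1)$, $j=0,\dots,k-1$, provide the remaining roots in $B$. Their distinctness follows because $\zeta_1$ generates $\Frac B$ over $\Frac A$ (it is the image of $z$ in $R = A[z]/(f)$, which generates $\Frac R = \Frac B$), so the Galois orbit of $\zeta_1$ has full size $k$; hence $f = \prod_{j=1}^k(z - \s^j(\zeta_1))$ in $B[z]$.

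The principal obstacle is the identification $R' = B$, which requires careful use of three ingredients: normality of $B$, finiteness of $B$ over $R$, and the equality $\Frac R = \Frac B$ coming from the degree count. A secondary subtlety, arising in $(2)\Rightarrow(1)$, is the distinctness of the $k$ Galois conjugates, which is precisely where the irreducibility of $f$ is needed (without it, $\zeta_1$ could lie in a proper Galois subextension and one would obtain fewer than $k$ distinct roots).
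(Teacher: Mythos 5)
Your proof is correct, but it takes a genuinely different route from the paper's. The paper's argument (due to Lairez) is a field-theoretic duality: setting $A=\IC\llb w,x\rrb$, $K=\Frac(A)$, $p(y)=y^k-w$, it identifies $K[y,z]/(f,p)$ with both $L_f[y]/(p)$ and $L_p[z]/(f)$, so the number of irreducible factors of $f$ over $K[w^{1/k}]$ equals the number of irreducible factors of $p$ over $\Frac(R)$ (Lemma \ref{lem:lairez}); one then passes between $A[w^{1/k}]$ and its fraction field by Gauss's lemma, and notes that $p$ splits over $\Frac(R)$ iff it has a root there (all $k$th roots of unity lie in $\IC$), such a root being integral over $R$ and hence in $R'$. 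You instead compute the integral closure outright, showing in both directions that $R'\cong\IC\llb w^{1/k},x\rrb$ via normality of that ring, finiteness over $R$, and the degree count $[\Frac(R):K]=k=[\Frac(B):K]$; in $(2)\Rightarrow(1)$ you then produce the splitting from the cyclic Galois action on $\Frac(B)/K$, with distinctness of the conjugates of $\zeta_1$ coming from irreducibility of $f$ exactly as you say. The paper's route is shorter and isolates a reusable counting lemma; your route is heavier on commutative algebra but yields more: the identification $R'\cong\IC\llb w^{1/k},x\rrb$ obtained in your direction $(2)\Rightarrow(1)$ immediately gives the regularity of $R'$, which the paper has to prove separately as Corollary \ref{cor:lairez} (by an argument close in spirit to yours). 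The only points to state carefully when writing this up are the transitivity of integrality ($R'$ integral over $R$, $R$ finite over $A$, hence $R'$ integral over the normal ring $B$) and the fact that the kernel of $A[t]\to A[u]$ is exactly $(t^k-w)$, which follows from monic division once $t^k-w$ is known to be the minimal polynomial of $u$ over $K$; neither is a gap.
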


\begin{proof}[Proof (due to Pierre Lairez)]
Let $A := \IC\llb w,x\rrb$ and let $K$ denote the field of fractions of $A$. The polynomial $p(w,y) := y^k - w$
is irreducible in $A[y]$ or in $K[y]$, by Eisenstein's criterion. In particular, $(p)$ is a maximal ideal in $K[y]$, so
that $K[w^{1/k}] = K[y]/(p)$ is a field, and, therefore, $K[w^{1/k}]$ is the field of fractions of $A[w^{1/k}] = A[y]/(p)$.

The ring $A[w^{1/k}]$ is a unique factorization domain, so that $f$ splits over $A[w^{1/k}]$ (i.e., $f$ splits in $A[w^{1/k}][z]$) 
if and only if $f$ splits over its field of fractions $K[w^{1/k}]$.

By hypothesis, $f$ is irreducible in $A[z]$, and therefore in $K[z]$. 
By Lemma \ref{lem:lairez} following, $f$ splits over the field $K[y]/(p) = K[w^{1/k}]$
if and only if $p$ splits over $K[z]/(f)$. Now, $K[z]/(f)$ is isomorphic to the field of fractions $\Frac[R]$ of $R$, and, of course, 
$p(w,y)$ splits over $\Frac(R)$ if and only if $p(w,y)$ has a root in $\Frac(R)$; since $p(w,y)$ is monic, such a root would belong to $R'$. 
\end{proof}

\begin{remark}\label{rem:lairez}
We recall that, if $f(x)$ is an irreducible polynomial (of a single variable $x$) with coefficients in a field $K$,
then the ideal $(f(x)) \subset K[x]$ is maximal, so that $L_f := K[x]/(f(x))$ is field ($L_f$ is the splitting field of $f(x)$
over $K$). In general, if $A$ is a reduced Noetherian ring, let $Q(A)$ denote the \emph{total
quotient algebra} $Q(A) := S^{-1}A$, where $S$ is the multiplicative subset of $A$ consisting of non-zerodivisors.
Then $Q(A)$ is the product of the fields of fractions $Q(A/\frak{p_i})$, where $\frak{p}_1,\ldots,\frak{p}_r$ are
the minimal primes of $A$. In particular, if $p(x) \in L[x]$ is a polynomial with coefficients in a field $L$, and
with $r$ distinct irreducible factors, then $Q(L[x]/(p(x))$ is a product of $r$ fields that are uniquely determined by $p(x)$.
\end{remark}

\begin{lemma}\label{lem:lairez}
Let $f(x) \in K[x]$ and $g(y) \in K[y]$ both denote irreducible polynomials of a single variable over a field $K$.
Then the number of irreducible factors of $f(x)$ over the field $L_g = K[y]/(g(y))$ equals the number of irreducible
factors of $g(y)$ over the field $L_f = K[x]/(f(x))$. 
\end{lemma}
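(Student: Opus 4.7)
The plan is to introduce the symmetric $K$-algebra $A := K[x,y]/(f(x),\,g(y))$ and exploit its two natural descriptions: as $L_g[x]/(f(x))$, by interpreting $L_g = K[y]/(g(y))$ as the coefficient ring, and symmetrically as $L_f[y]/(g(y))$. Since $A$ is a finite-dimensional $K$-algebra, it is Artinian, hence decomposes uniquely as a finite product of local Artinian rings, one factor per maximal ideal. The whole argument will reduce to counting the maximal ideals of $A$ in two different ways.

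First I would factor $f$ over $L_g$: write $f(x) = \prod_{i=1}^r f_i(x)^{e_i}$, with the $f_i$ distinct monic irreducibles in the PID $L_g[x]$. The ideals $(f_i^{e_i})$ are pairwise coprime, so the Chinese Remainder Theorem yields
\[
A \;\cong\; L_g[x]/(f(x)) \;\cong\; \prod_{i=1}^r L_g[x]/(f_i(x)^{e_i}),
\]
and each factor is a local ring whose unique maximal ideal is the image of $(f_i)$. Hence $A$ has exactly $r$ maximal ideals. Swapping the roles of $f$ and $g$ and writing $g(y) = \prod_{j=1}^s g_j(y)^{d_j}$ for the analogous factorization over $L_f$, the same argument shows $A$ has exactly $s$ maximal ideals. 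Therefore $r = s$.

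Since the paper works in characteristic zero, the irreducible polynomials $f$ and $g$ are separable over $K$, so their factorizations over any extension field (in particular over $L_g$ and $L_f$) are squarefree; i.e., all the exponents $e_i$ and $d_j$ equal $1$. Thus $r$ and $s$ also count irreducible factors with multiplicity, matching the interpretation needed in Proposition~\ref{prop:lairez}. I do not foresee a real obstacle here: the only points requiring care are verifying that CRT applies (immediate, since $L_g[x]$ and $L_f[y]$ are PIDs) and that each quotient $L_g[x]/(f_i^{e_i})$ is local (immediate, since $(f_i^{e_i})$ is primary with radical $(f_i)$ maximal).
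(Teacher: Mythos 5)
Your proof is correct and follows essentially the same route as the paper: the key step in both is to identify the algebra $K[x,y]/(f(x),g(y))$ with both $L_g[x]/(f(x))$ and $L_f[y]/(g(y))$ and then count a symmetric ring-theoretic invariant. The only (immaterial) difference is the bookkeeping device: you count maximal ideals of this Artinian algebra via the Chinese Remainder Theorem, whereas the paper counts minimal primes via the total quotient algebra (Remark \ref{rem:lairez}); for an Artinian ring these coincide, and your explicit remark on separability tidies up the multiplicity issue that the paper leaves implicit.
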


\begin{proof}
The algebra $K[x,y]/(f(x),g(y))$ can be identified with both $L_f[y]/(g(y))$ and $L_g[x]/(f(x))$, so the assertion
is an immediate consequence of Remark \ref{rem:lairez}.

As an alternative argument, we can use that fact that the irreducible factors of $f(x)$ over $L_g$
are in one-to-one correspondence
with the irreducible components of $\Spec (L_g[x]/(f(x))) = \Spec (K[x,y]/(f(x),g(y)))$, and likewise for the
irreducible factors of $g(y)$.
\end{proof}

\begin{corollary}\label{cor:lairez}
With the hypotheses of Proposition \ref{prop:lairez},
if either of the (equivalent) conditions (1),\,(2) of the proposition holds,
then $R'$ is regular.
\end{corollary}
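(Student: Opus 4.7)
The plan is to show, under the equivalent conditions (1) and (2) of Proposition \ref{prop:lairez}, that $R'$ coincides with the regular local ring $S := \IC\llb u, x\rrb$, where $u \in R'$ satisfies $u^k = w$ by condition (2). Since $S$ is regular, the equality $R' = S$ then delivers the corollary. The two conditions will be used in opposite directions: (2) produces the inclusion $S \subseteq R'$, while (1) forces $R \subseteq S$, from which $R' \subseteq S$ follows by normality of $S$.

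First I would verify $S \subseteq R'$. Since $R$ is a complete local Noetherian domain (as $f$ is irreducible in $\IC\llb w,x\rrb[z]$) and $R'$ is its integral closure in $\Frac(R)$, $R'$ is a finite $R$-module, hence itself complete and local. It contains $\IC\llb w, x\rrb \subseteq R$ together with the element $u$, and therefore the $\IC\llb w,x\rrb$-subalgebra $\IC\llb w,x\rrb[u]/(u^k - w)$. By Eisenstein's criterion (as in the proof of Proposition \ref{prop:lairez}) $u^k - w$ is irreducible in $\IC\llb w, x\rrb[u]$, so this subalgebra is free of rank $k$ over $\IC\llb w,x\rrb$ with basis $1, u, \ldots, u^{k-1}$, and is canonically identified with $\IC\llb u, x\rrb = S$.

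Next I would use (1) to place $R$ inside $S$. By (1), $f$ splits in $S[z]$ as $\prod_{j=1}^{k}(z - b_j)$ with each $b_j \in S$. Evaluating at the element $z \in R' \supseteq S$ yields $\prod_{j=1}^{k}(z - b_j) = f(z) = 0$ in $R'$; since $R' \subseteq \Frac(R)$ is a domain, some factor must vanish, so $z = b_{j_0} \in S$ for some index $j_0$. Hence $R = \IC\llb w, x\rrb[z]/(f) \subseteq S$, and in particular $\Frac(R) = \Frac(S)$. Since $S$ is regular local (hence normal) and contains $R$, any element of $\Frac(R) = \Frac(S)$ integral over $R$ is integral over $S$ and therefore already lies in $S$; thus $R' \subseteq S$. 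Combined with the first step, $R' = S$ is regular. The only step that really needs care is the identification $\IC\llb w,x\rrb[u]/(u^k - w) = \IC\llb u, x\rrb$ as a subring of $R'$, which is immediate from Eisenstein together with the observation that the left-hand side is already a complete local ring, finite over $\IC\llb w,x\rrb$.
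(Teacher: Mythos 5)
Your proof is correct and follows essentially the same route as the paper's: you adjoin the root $u$ of $y^k-w$ from condition (2) to obtain the regular subring $\IC\llb u,x\rrb$ of $R'$, use the splitting from condition (1) together with the fact that $R'$ is a domain to conclude that $z$ equals one of the roots $b_{j_0}(u,x)$, and then identify $R'$ with $\IC\llb u,x\rrb$ by normality. The only cosmetic difference is that you organize this as two inclusions $S\subseteq R'$ and $R'\subseteq S$ rather than computing the kernel of $R[y]\twoheadrightarrow R[u]$ as the paper does.
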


\begin{proof}
By condition (1) of Proposition \ref{prop:lairez}, we can write
$$
f(y^k,x,z) = \prod_{i=0}^{k-1} g(\ep^i y,x,z),
$$
where $g \in \IC\llb y,x\rrb[z]$ and $\ep = e^{2\pi i/k}$. Of course,
\begin{equation}\label{eq:div}
\prod_{i=0}^{k-1} g(\ep^i y,x,z) - f(w,x,z) = (y^k - w) h(y^k,w,x,z),
\end{equation}
where $h \in \IC\llb y,w,x\rrb[z]$.

By condition (2), $y^k-w$ has a root $u$ in $R' \subset \Frac(R)$, and the homomorphism from
$R[y]$ onto $R[u] \subset R'$ induced by $y\mapsto u$ has kernel $(g(\ep^iy,x,z), y^k - w)$, for some $i=0,\ldots,k-1$, by \eqref{eq:div}.
Therefore,
$$
R[u] \cong \frac{R[y]}{(g(\ep^iy,x,z), y^k-w)} \cong \frac{\IC\llb y,w,x\rrb[z]}{(g(\ep^iy,x,z),\,y^k-w)} 
\cong \frac{\IC\llb y,x\rrb[z]}{(g(\ep^iy,x,z))} \cong  \IC\llb y,x\rrb.
$$
In particular, $R[u] \subset R'$ is regular and hence already integrally closed, and, therefore, 
coincides with $R'$, as required.
\end{proof}

\begin{example}\label{ex:lairez}
The variety $X := \{z^2 -w_1w_2 x^2\}$ has singular normalization $\{z^2 - w_1w_2 = 0\}$. Let $w_1=y_1^2,\,w_2 = y_2^2$. Then
$z^2 -w_1w_2 x^2$ splits as $(z-y_1y_2x)(z+y_1y_2x)$. The mapping to $X$ from each irreducible component $\{z \pm y_1y_2x=0\}$
is generically $2$-to-$1$.
\end{example}

\smallskip
\section{Limits of $k$-fold normal crossings in $k+1$ variables}\label{sec:lim}
In this section, we prove Theorem \ref{thm:limintro}. The proof consists of two parts.
The first part is formulated as Theorem \ref{thm:lim} below. Theorem \ref{thm:lim} begins
with the hypotheses of Theorem \ref{thm:splitintro}, and the proof provides a construction
in \'etale (or analytic) local coordinates that proves the assertion of Theorem \ref{thm:limintro}, although it may
not be evident \emph{a priori} that the blowings-up involved are global admissible smooth
blowings-up.

There are actually two sequences of blowings-up involved in the proof of Theorem \ref{thm:limintro}
or Theorem \ref{thm:lim}. First there is a sequence of \emph{cleaning blow-ups}, following \cite[Section 2]{BMmin1}, after which $X$ can
be described locally at a limit of $\nc(k)$ points, by a certain \emph{pre-circulant normal form}. In the
irreducible case, for example, the latter means an equation of the following form in suitable local
\'etale (or analytic) coordinates:
\begin{equation}\label{eq:precirc}
\De_k\left(z, w^{n_1 + 1/k}x_1,\ldots, w^{n_{k-1} + (k-1)/k} x_{k-1}\right) = 0
\end{equation}
(where we can be more precise about the integers $n_j$---see Remark \ref{rem:minprecirc}).

Once we have such local coordinates, there is a second blowing-up sequence which we
use to reduce each $n_j$ to zero, to get circulant normal form. For example, given $j$, we
can reduce $n_j$ to zero by blowings-up with centre $\{z = w = 0,\,  x_\ell = 0, \text{ for all } \ell\neq j\}$.
We will show in a simple fashion how to choose local coordinates for \eqref{eq:precirc} so that
these centres of blowing up make sense in a global combinatorial way. A similar idea will be used
in the proof of Proposition \ref{prop:lim3} as well as in \S\ref{subsec:overviewproof}\,(B)(II) and \S\ref{subsec:cp4}.

The purpose of the second part of the proof of Theorem \ref{thm:limintro} is to describe
the first sequence of blowings-up above in an
invariant global way that is independent of the local construction in the proof of Theorem
\ref{thm:lim}; these blowings-up are called \emph{cleaning blow-ups}, following 
\cite[Section 2]{BMmin1}. Similar ideas have been developed by Koll\'ar \cite{Kol2} and
Abramovich, Temkin and W{\l}odarczyk \cite{ATW}.

The proof of Theorem \ref{thm:lim} requires little knowledge of the technical details of the desingularization
algorithm, except for a very basic understanding of maximal contact and the coefficient
ideal. A reader unfamiliar with the technology of desingularization can safely read
the rest of the article without the details of the second part of the proof of 
Theorem \ref{thm:limintro}. At the same time, the proof
of Theorem \ref{thm:lim} introduces some of this technology by explicit local
computation that we hope may be helpful in understanding the cleaning procedure, as
described in \S\ref{subsec:clean}. Some basic details of the desingularization
algorithm and the invariant $\inv$ are recalled in \S\ref{subsec:inv} and will be needed
also in Section \ref{sec:alg}. The reader is again referred to \cite{BMfunct} and the \emph{Crash
course on the desingularization invariant} \cite[Appendix]{BMmin1} for all of the notions
from resolution of singularities that we use.

\subsection{Circulant normal form}\label{subsec:lim}

\begin{theorem}\label{thm:lim}
Assume that (after an $\inv$-admissible sequence of blowings-up); cf. \S\ref{subsec:inv} below)
$f(w,x_1,\ldots,x_{k-1},z)$ satisfies the hypotheses of Theorem \ref{thm:splitintro}, and that
$$
\inv(0) = \inv(\nc(k)).
$$
Then there is a finite sequence of admissible blowings-up(in fact, admissible for the
truncated invariant $\inv_1$; see \S\ref{subsec:inv}) that are isomorphisms 
over the $\text{nc}(k)$ locus, after
which the only singularities that may occur as limits of $\text{nc}(k)$ points, are products of circulant
singularities.

More precisely, assume that $f = f_1\cdots f_s$, where each $f_i$ is an irreducible polynomial
\begin{equation*}
f_i(w,x,z) = z^{k_i} + \sum_{j=1}^{k_i} a_{ij}(w,x) z^{k_i - j}
\end{equation*}
with regular or algebraic (or analytic) coefficients,
and $k_1 + \cdots + k_s = k$. Then there is a finite sequence of admissible blowings-up that are isomorphisms over 
the $\text{nc}(k)$ locus, after which, at the limit of the $\text{nc}(k)$ points, there is a local \'etale 
(or analytic) coordinate system 
$\left(w, (y_{i\ell})_{\ell=0,\ldots,k_i -1,\, i=1,\ldots s}\right)$
in which the strict transform of $f(w,x,z)=0$ is given by
\begin{equation}\label{eq:circnormal}
\prod_{i=1}^s \De_{k_i} \left(y_{i0}, w^{1/k_i} y_{i1}, \ldots, w^{(k_i-1)/k_i}y_{i,k_i-1}\right) = 0.
\end{equation}
\end{theorem}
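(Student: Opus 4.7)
The plan is to work locally at the origin and exhibit an \'etale coordinate system in which each irreducible factor of $f$ takes circulant form, by combining the splitting theorem \ref{thm:splitintro} with the cyclic Galois structure, and then to eliminate the resulting excess $w$-factors via cleaning blow-ups. The globalisation of those blow-ups (so that they are \emph{admissible} for $\inv_1$) is the separate content of \S\ref{subsec:clean}; here I focus on the local construction.

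First, by Theorem \ref{thm:splitintro} together with Remark \ref{rem:finitesplit}, after finitely many blow-ups of $0$ along the $w$-axis I factor $f = f_1 \cdots f_s$ with each $f_i \in \IC\llb w, x\rrb[z]$ irreducible of degree $k_i$ in $z$ and splitting in $\IC\llb w^{1/k_i}, x\rrb[z]$. Since $\IC(w^{1/k_i})/\IC(w)$ is cyclic of order $k_i$, if $b_{i,0}(v,x) = \sum_j v^j c_{i,j}(x)$ is one root of $f_i(v^{k_i},x,z)$, the remaining roots are $b_{i,\ell}(v,x) = b_{i,0}(\ep_i^\ell v, x)$ with $\ep_i = e^{2\pi i/k_i}$. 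Grouping the expansion of $b_{i,0}$ by residues of $j$ modulo $k_i$ then yields the key identity
\[
f_i = \De_{k_i}\bigl(z - \hat y_{i,0},\ -w^{1/k_i}\hat y_{i,1},\ \ldots,\ -w^{(k_i-1)/k_i}\hat y_{i,k_i-1}\bigr), \qquad \hat y_{i,r} := \sum_{n\geq 0} w^n c_{i,k_i n + r}(x).
\]

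Next I use the generic $\nc(k)$ hypothesis on $\{z=x=0,\,w\neq 0\}$: at each such point the $k$ branches $z=b_{i,\ell}(w,x)$ are smooth and transverse. The vanishing of $b_{i,\ell}$ on the $w$-axis combined with invertibility of the Vandermonde matrix $(\ep_i^{r\ell})$ forces $\hat y_{i,r}(w,0)=0$ for all $r\geq 1$, so each such $\hat y_{i,r}$ lies in the ideal $(x_1,\ldots,x_{k-1})$; the $r=0$ piece is absorbed by a change of variable in $z$. Extracting the maximal $w$-factor $\hat y_{i,r} = w^{n_{i,r}}\tilde y_{i,r}$ and setting $y_{i,0} := z-\hat y_{i,0}$, $y_{i,r} := -\tilde y_{i,r}$, a Vandermonde-plus-rank computation translates the generic transversality into linear independence of the $k+1$ gradients $\{dw\}\cup\{dy_{i,\ell}\}$ at $0$, so that $(w,y_{i,\ell})$ is a valid \'etale coordinate system. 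In these coordinates $f_i$ acquires the \emph{pre-circulant form}
\[
f_i = \De_{k_i}\bigl(y_{i,0},\, w^{n_{i,1}+1/k_i}y_{i,1},\, \ldots,\, w^{n_{i,k_i-1}+(k_i-1)/k_i}y_{i,k_i-1}\bigr).
\]

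Finally I reduce each $n_{i,r}>0$ to zero by a cleaning sequence: for each pair $(i,r)$ with $n_{i,r}>0$, blow up the smooth codimension-two centre $\{w=0,\ y_{j,s}=0\text{ for all }(j,s)\neq (i,r)\}$; this blow-up is an isomorphism over the $\nc(k)$ locus, lowers $n_{i,r}$ by one, and preserves the pre-circulant shape of the other factors. After finitely many steps all $n_{i,r}$ vanish and the strict transform of $f$ takes the circulant normal form \eqref{eq:circnormal}. The hard part is the globalisation of this procedure: the cleaning centres are defined from local coordinates and the exponents $n_{i,r}$, so to promote the locally defined blow-ups to globally $\inv_1$-admissible smooth blow-ups one must identify $n_{i,r}$ and the centres with coordinate-invariant data extracted from the coefficient ideal and $\inv_1$, along the lines of \cite[Section 2]{BMmin1}, \cite{Kol2}, \cite{ATW}; this is the content of \S\ref{subsec:clean}. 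A secondary subtlety is the transversality-to-coordinates step, which although elementary requires a direct rank computation to check that a certain determinant involving $\ep_i^{r\ell}$ and $(\partial_{x_m}\tilde y_{i,r})|_0$ is nonzero.
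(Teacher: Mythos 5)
Your overall architecture matches the paper's: split $f$ via Theorem \ref{thm:splitintro} and Remark \ref{rem:finitesplit}, decompose each root $b_i(\ep_i^\ell v,x)$ by residues mod $k_i$ (your $\hat y_{i,r}$ are, up to sign and a factor $w^{r/k_i}$, exactly the paper's $X_{i\ell}= w^{m_{i\ell}+\ell/k_i}\zeta_{i\ell}$), then clean away excess powers of $w$ and finish with the combinatorial blow-ups that lower the integer exponents. The globalisation via \S\ref{subsec:clean} and the codimension-$k$ (not two) cleaning centres are also essentially right.

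There is, however, a genuine gap at the transversality-to-coordinates step. It is not true that generic $\nc(k)$ on $\{z=x=0,\,w\neq 0\}$ forces the functions $\tilde y_{i,r}$ (obtained by dividing each $\hat y_{i,r}$ by its maximal $w$-power) to have independent differentials at $0$. Take $k=3$ and
\begin{equation*}
f=\De_3\bigl(z,\;w^{1/3}x_1,\;w^{2/3}(x_1+wx_2)\bigr),
\end{equation*}
so $\hat y_1=x_1$, $\hat y_2=x_1+wx_2$, neither divisible by $w$. One checks that $f$ is $\nc(3)$ at every point of $\{z=x=0,\,w\neq0\}$ (the $3\times 3$ minor of branch gradients is $w_0^{2}\cdot(\text{Vandermonde})\neq 0$ for $w_0\neq 0$) and that $\inv(0)=\inv(\nc(3))$; yet $d\tilde y_1|_0=d\tilde y_2|_0=dx_1$, so $(w,\tilde y_1,\tilde y_2,z)$ is not a coordinate system and your claimed pre-circulant form does not exist in year zero. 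The transversality degenerates exactly along $\{w=0\}$, which the hypotheses permit. This is why the paper does not pass directly to coordinates: it iterates through maximal contact subspaces $N^1\supset N^2\supset\cdots$, writing $\zeta_\ell=\eta_\ell^{j-1}+w^{\widetilde m_\ell}\xi_\ell$ with $\eta_\ell^{j-1}$ in the ideal of the previously chosen $\xi$'s, performs the cleaning blow-ups \emph{first} to kill the factors $w^{\widetilde m_\ell}$ (in the example, blowing up $\{z=x_1=w=0\}$ turns $\hat y_2$ into $x_1+x_2$), and only then makes the coordinate change $y_{\ell_j}=\eta_{\ell_j}^{j-1}+\xi_{\ell_j}$. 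The hypothesis $\inv(0)=\inv(\nc(k))$ — which your argument never actually invokes — is what guarantees, via the residual orders $\nu_2=\cdots=\nu_k=1$, that each successive $\xi_{\ell_j}$ has order $1$ transverse to the earlier ones, so that the final change of variables is legitimate. To repair your proof you would need to interleave the cleaning with the construction of the coordinates in this way, rather than changing coordinates first and cleaning afterwards.
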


\begin{remark}\label{rem:limE}
In Section \ref{sec:alg}, where we treat $X$ together with a simple normal crossings divisor $E$,
we will apply Theorem \ref{thm:lim} to a product of local generators of the ideals of $X$ and the
components of $E$; see also Remark \ref{rem:Dsq}. The circulant normal form \eqref{eq:circnormal}
then becomes
\begin{equation*}
y_1\cdots y_r \prod_{i=1}^s \De_{k_i} \left(y_{i0}, w^{1/k_i} y_{i1}, \ldots, w^{(k_i-1)/k_i}y_{i,k_i-1}\right) = 0,
\end{equation*}
where the $\{y_j=0\}$ are the strict transforms of the components of $E$.
\end{remark}

\begin{proof}[Proof of Theorem \ref{thm:lim}]
By Theorem \ref{thm:splitintro}, after a finite number of blowings-up of the origin, we can assume
that $f$ splits in $\IC\llb w^{1/p},x\rrb[z]$, for some positive integer $p$.

We will first consider the case that $f$ is irreducible (so we get $\text{cp}(k)$ as
limit), and then handle the general case.

\medskip\noindent
\emph{Irreducible case.} 
By Lemma \ref{cor:finitesplit}(2) and Remark \ref{rem:finitesplit}, we can take $p=k$; i.e.,
$f$ splits in $\IC\llb w^{1/k},x\rrb[z]$, and we can write
\begin{equation*}
f(v^k,x,z) = \prod_{\ell=0}^{k-1} (z + b(\ep^\ell v, x)),
\end{equation*}
where $b(v,x) \in \IC\llb v,x\rrb$ and $\ep = e^{2\pi i/k}$. 

We can assume that $a_1(w,x)=0$ (by the Tschirnhausen transformation; i.e., completion of the
$k$th power). Set 
\begin{equation*}
Y_\ell := z + b(\ep^\ell v, x), \quad \ell = 0,\ldots,k-1,
\end{equation*}

\medskip\noindent
and define $X_0,\ldots,X_{k-1}$ by \eqref{eq:inveigenmatrix}; i.e.,
\begin{align}\label{eq:inverse1}
X_0 &= \frac{1}{k}\sum_{j=0}^{k-1} Y_j = z,\nonumber \\
X_\ell &= \frac{1}{k} \sum_{j=0}^{k-1} \ep^{\ell(k-j)} Y_j =  \frac{1}{k} \sum_{j=0}^{k-1} \ep^{\ell(k-j)} b(\ep^j v,x),\quad \ell =1,\ldots,k-1.
\end{align}

It is easy to check that, for each $\ell =1,\ldots,k-1$, $v^{k-\ell} X_\ell$ is invariant under the action of the group $\IZ_k$ of
$k$th roots of unity (where the operation of $\ep$ on functions of $(v,x)$ is induced by $(v,x) \mapsto (\ep v,x)$).
In other words,
\begin{equation*}
v^{k-\ell} X_\ell = \eta_\ell (v^k, x),
\end{equation*}
where $\eta_\ell(w,x) \in \IC\llb w,x\rrb$, $\ell =1,\ldots,k-1$. Since each $\eta_\ell$ must, therefore, be divisible by $v^k$,
we can write
\begin{equation*}
X_\ell = v^{k m_\ell + \ell} \zeta_\ell (v^k, x) = w^{m_\ell + \ell/k} \zeta_\ell(w,x),
\end{equation*}
where $m_\ell$ is a nonnegative integer and $\zeta_\ell(w,x) \in \IC\llb w,x\rrb$ is not divisible by $w$, $\ell =1,\ldots,k-1$.

Since the $X_1,\ldots,X_{k-1}$ are given by an invertible linear combination of $b(\ep v, x),\allowbreak\ldots,b(\ep^{k-1}v,x)$
(recall that $\sum_{\ell=0}^{k-1} b(\ep^\ell v,x) = 0$) , it
follows that the \emph{coefficient ideal} of the marked ideal $(f,k)$ is equivalent to the marked ideal
\begin{equation*}
\ucC^1 := \left( \left(w^{km_\ell + \ell}\zeta_\ell^k\right), k\right)
\end{equation*}
on the \emph{maximal contact subspace} $N^1 := \{z=0\}$ (cf. \cite[Example A.13]{BMmin1}). Set 
\begin{equation}\label{eq:mon1}
\al_1 := \min_{1\leq \ell\leq k-1} \left(k m_\ell + \ell\right),
\end{equation}
and let $\ell_1$ denote the (unique) corresponding $\ell$ (realizing the minimum), and $\xi_{\ell_1}$ the corresponding $\zeta_\ell$. 
Then $\xi_{\ell_1}$ has order $1$, since $\inv(a) = (k,0,1,\dots)$. The monomial $w^{\al_1}$ generates
the monomial part of the coefficient ideal $\ucC^1$ (cf. \cite[\S A.6]{BMmin1}). Set $p_1 := m_{\ell_1}$.

It follows that the second coefficient ideal $\ucC^2$ (still with marked or associated order $k$), on the second
maximal contact subspace $N^2 := \{z=\xi_{\ell_1}=0\}$, is generated by
\begin{equation*}
w^{km_{\ell} + \ell -\al_1} \zeta_{\ell}^k |_{z=\xi_{\ell_1}=0}\,,\quad \ell\neq \ell_1.
\end{equation*}
Therefore, for each $\ell\neq\ell_1$, there is a nonnegative integer $\widetilde{m}_\ell$
such that
\begin{equation*}
\zeta_\ell = \eta_\ell^1 + w^{\widetilde{m}_\ell}  \xi_\ell,
\end{equation*}
where $\eta_\ell^1$ is in the ideal generated by $\xi_{\ell_1}$, and $\xi_\ell|_{z=\xi_{\ell_1}=0}$ is not divisible by $w$. Let
\begin{equation*}
\al_2 := \min_{\ell\neq \ell_1} \left(k (m_\ell + \widetilde{m}_\ell)  + \ell - \al_1\right),
\end{equation*}
and let $\ell_2 \neq \ell_1$ denote the (unique) corresponding $\ell$. 
Then $\xi_{\ell_2}$ has order $1$, since $\inv(a) = (k,0,1,0,1,\dots)$. The monomial $w^{\al_2}$ generates
the monomial part of the coefficient ideal $\ucC^2$. Clearly, $\al_2 = kp_2 + \widetilde{h}_2$,
where $p_2,\,\widetilde{h}_2$ are nonnegative integers and $1\leq \widetilde{h}_2 < k$.

We repeat this construction for each $j=3,\ldots,k-1$. 

We now apply cleaning blow-ups, with centre $N^j \,\cap\, \{w=0\}$, to successively reduce each $p_j$ to zero, $j=k-1, k-2,\ldots, 1$. In particular, for each $j$, we reduce $\widetilde{m}_{\ell_j}$ to $0$ since
$\eta_{\ell_j}^{j-1}$ is in the ideal generated by $\xi_{\ell_1},\ldots,\xi_{\ell_{j-1}}$.

We can then make a formal (or \'etale) coordinate change
\begin{align*}
y_{\ell_1} &:= \xi_{\ell_1},\\
y_{\ell_j} &:= \eta_{\ell_j}^{j-1} +  \xi_{\ell_j}, \quad j=2,\dots,k-1,
\end{align*}
to reduce each $X_\ell$ to $w^{n_\ell + h_\ell/k} y_\ell$,\, $\ell =1,\ldots,k-1$, where $\{h_1,\ldots,h_{k-1}\} = \{1,\ldots,k-1\}$,
each $n_\ell$ is a nonnegative integer, and $n_{\ell_1} = 0$. (After re-ordering indices if necessary, and relabelling variables,
this means that we have reduced the equation $f(w,x,z)=0$ 
to a pre-circulant singularity \eqref{eq:precirc}.

\begin{remark}\label{rem:minprecirc}
The cleaning computation above shows that \eqref{eq:precirc} can be
written in the following way with minimal choice of $n_\ell$:
\begin{equation*}\label{eq:minprecirc}
\De_k\left(z, w^{h_1/k}x_1, w^{n_2 + h_2/k}x_2\ldots, w^{n_{k-1} + h_{k-1}/k} x_{k-1}\right) = 0,
\end{equation*}
where $\{h_1,\ldots,h_{k-1}\} = \{1,\ldots,k-1\}$ and each $n_\ell = \min\{m\in \IN: m + h_\ell/k > n_{\ell-1} + h_{\ell-1}/k\}$,
$n_1 = 0$.
\end{remark}

\begin{example}\label{ex:minprecirc}
The equation
\begin{equation*}
\De_3(z, w^{2/3}x_1, w^{4/3}x_2) = 0
\end{equation*}
is in (minimal) pre-circulant normal form, but not in circulant normal form.
\end{example}

Now, beginning with pre-circulant normal form \eqref{eq:precirc}, we can blow up to reduce each $n_\ell$ to $0$,
as described in the introduction to this section. For later reference, we write the argument as the following
remark.

\begin{remark}\label{rem:D_1trick}
To make the blowings-up described in a global way, we take advantage
of the normal form \eqref{eq:precirc} to introduce a small trick or \emph{astuce} that will be repeated
in the proof of Proposition \ref{prop:lim3}, as well as in \S\ref{subsec:overviewproof}\,(B)(II) and \S\ref{subsec:cp4}. 
We first make a single blowing-up $\s$ with
centre $\{0\}$. This blowing-up
does not change \eqref{eq:precirc}, which is transformed to the same equation in the $w$-chart of the blowing-up $\s$,
given by substituting $(w, wx, wz)$ for the original variables $(w,x,z)$.

But $\{w=0\}$ is now the exceptional divisor $D_1$ of $\s$, and, in the new coordinates $(w,x,z)$
(after the substitution above), the centre of blowing up $\{z = w = 0,\, x_j =0, \text{ for all } j \text{ where } n_j=0\}$, needed
to decrease all positive $n_j$,
for example, extends to a global smooth subvariety of $D_1$; more precisely, the blowing-up in the $w$-chart extends to a global
admissible blowing-up which can be described in an explicit way in every coordinate chart of $\s$. We can continue, to decrease each $n_j$ to $0$.
\end{remark}

\begin{remark}\label{rem:tschirn}
It is not necessary to assume that $a_1 = 0$ in the proof for the irreducible case above (i.e., in the hypotheses of Theorem \ref{thm:splitintro}); 
the Tschirnhausen transformation will appear
naturally in the construction (see \eqref{eq:tschirn1} below). This is, in fact, more convenient in the general case following.
\end{remark}

\medskip\noindent
\emph{General case.} Consider $f=f_1\cdots f_s$ as in the statement of the theorem. 
By Lemma \ref{cor:finitesplit}(2) and Remark \ref{rem:finitesplit}, for each $i=1,\ldots,s$, we can write
\begin{equation*}
f_i(v^{k_i},x,z) = \prod_{\ell=0}^{k_i -1} (z + b_i(\ep_i^\ell v, x)),
\end{equation*}
where $b_i(v,x) \in \IC\llb v,x\rrb$ and $\ep_i = e^{2\pi i/k_i}$. 

For each $i=1,\ldots,s$ and $\ell = 0,\ldots,k_i -1$, set
\begin{equation*}
Y_{i\ell} := z + b_i(\ep_i^\ell v,x),
\end{equation*} 
and define $X_{i\ell}$ as in \eqref{eq:inveigenmatrix}; i.e.,
\begin{equation*}
X_{i\ell} = \frac{1}{k_i} \sum_{j=0}^{k_i-1} \ep_i^{\ell(k_i -j)} Y_{ij}.
\end{equation*}
Then, for each $i$,
\begin{align*}
X_{i0} &= z + \frac{1}{k_i} a_{i1}(v^{k_i},x),\\
X_{i\ell} &= \frac{1}{k_i} \sum_{j=0}^{k_i -1} \ep_i^{\ell(k_i -j)} b_i(\ep_i^j v,x),\quad \ell =1,\ldots,k_i-1;
\end{align*}
hence,
\begin{align}\label{eq:tschirn1}
X_{i0}(w,x,z) &= z + \frac{1}{k_i} a_{i1}(w,x),\\
X_{i\ell}(w,x) &= w^{m_{i\ell} + \ell/k_i} \zeta_{i\ell}(w,x), \quad \ell =1,\ldots,k_i-1,\nonumber
\end{align}
where each $m_{i\ell}$ is a nonnegative integer and each $\zeta_{i\ell}(w,x)\in \IC\llb w,x\rrb$ is
not divisible by $w$.

We can use $\{X_{i0}(w,x,z) = 0\}$, for any choice of $i$, as the first maximal contact subspace $N^1$; let us take
\begin{equation*}
N^1 := \{X_{10} = 0\} = \{z + \frac{1}{k_1}a_{11}(w,x)=0\}.
\end{equation*}

The coefficient ideal of $(f,k)$ is equivalent to the marked ideal on $N^1$ given by (the restriction to $N^1$ of)
\begin{equation*}
\sum_{i=1}^s \left(\left(X_{i\ell}^{k_i}\right)_{0\leq\ell\leq k_i-1},\,k_i\right)
\end{equation*}
(sum of marked ideals; see \cite[\S3.3]{BMfunct}, \cite[Definition A.8]{BMmin1}), or by
\begin{equation*}
\ucC^1 := \left(\left(X_{i\ell}^{K/k_i}\right)_{\substack{0\leq\ell\leq k_i-1\\i=1,\ldots,s}},\,K\right),
\end{equation*}
where $K$ denotes the least common multiple (or any given common multiple) of $k_1,\ldots,k_q$.

We carry out the construction as in the irreducible case, for each $j=1,\ldots,k-1$. For example,
the monomial part of the coefficient ideal $\ucC^1$ is generated by a monomial
 $w^{K\al_1/k_{i_1}}$, where
\begin{equation*}
\al_1 = k_{i_1} m_{i_1 \ell_1} + \ell_1,
\end{equation*}
as in \eqref{eq:mon1}; etc.
The construction involves successive
nonnegative integers $\widetilde{m}_{i_j \ell_j}$,
each associated to some $X_{i\ell} = X_{i_j \ell_j}$ (except for $i=1, \ell=0$), as above.

We can now apply cleaning blow-ups as above, to successively reduce each $\widetilde{m}_{i_j \ell_j}$
to $0$. We can then introduce new formal (or \'etale) coordinates $y_{10} := X_{10}$ and $y_{i\ell}$, $(i,\ell) \neq (1,0)$,
as above. The effect is to reduce each $f_i(w,x,z)$ to a pre-circulant singularity
\begin{equation*}
\De_{k_i} \left(y_{i0}, w^{n_{i1}+1/k_i} y_{i1}, \ldots, w^{n_{i, k_i-1}+(k_i-1)/k_i}y_{i,k_i-1}\right);
\end{equation*}
i.e., to reduce $f(w,x,z)$ to the product of pre-circulant singularities
\begin{equation*}
\prod_{i=1}^s \De_{k_i} \left(y_{i0}, w^{n_{i1}+1/k_i} y_{i1}, \ldots, w^{n_{i, k_i-1}+(k_i-1)/k_i} y_{i,k_i-1}\right).
\end{equation*}

We can now proceed to reduce each $n_{ij}$ to $0$ by global admissible blowings-up, as in Remark \ref{rem:D_1trick}.
This completes the proof of Theorem \ref{thm:lim}.
\end{proof}

\subsection{Recall on the desingularization invariant}\label{subsec:inv}
Let $X \hookrightarrow Z$ denote an embedded variety ($Z$ smooth), and let $E\subset Z$ denote an snc divisor.

Let $\s: Z' \to Z$ denote a blowing-up with centre $C$. We say that $\s$ is \emph{admissible} (for $(X,E)$)
if $C$ is smooth and snc with respect to $E$, and the Hilbert-Samuel function $H_{X,x}$ is locally constant (as a
function of $x$) on $C$. In the case that $X$ is a hypersurface (i.e., $\dim X = n-1$, where $n=\dim Z$), the latter
property is equivalent to the condition that the order $\ord_xX$ is locally constant on $C$.

Given a sequence of admissible blowings-up
\begin{equation}\label{eq:blup1}
Z = Z_0 \stackrel{\s_1}{\longleftarrow} Z_1 \longleftarrow \cdots
\stackrel{\s_{s}}{\longleftarrow} Z_{t}\,,
\end{equation}
we consider successive transforms $(X_q,E_q)$ of $(X_0,E_0) := (X,E)$: for each $q$, $X_{q+1}$ denotes
the strict transform of $X_q$ by $\s_{q+1}$, and $E_{q+1}$ denotes the divisor whose components are the
strict transforms of all components of $E_q$, together with the \emph{exceptional divisor} $\s_{q+1}^{-1}(C_q)$
of $\s_{q+1}$. We sometimes also call $E_q$ the \emph{exceptional divisor}, in a given \emph{year} $q$.

The desingularization invariant $\inv$ is defined
step-by-step over a sequence of blowings-up \eqref{eq:blup1}, where each successive blowing-up is 
$\inv$-\emph{admissible} 
(meaning that $\inv$ is locally constant on the centre of each blowing-up $\s_{q+1}$).

Assume that $X$ is an \emph{embedded hypersurface}; i.e., $X\hookrightarrow Z$, where $\dim Z = \dim X +1$.

Let $a\in X_q$. Then $\inv(a)$ has the form
\begin{equation}\label{eq:inv}
\inv(a) = (\nu_1(a), s_1(a),\ldots,\nu_t(a), s_t(a), \nu_{p+1}(a)),
\end{equation}
where each $\nu_j(a)$ is a positive rational number (\emph{residual order}) if $j\leq p$,
each $s_j(a)$ is a nonnegative integer (which counts certain components of $E_q$), and
$\nu_{p+1}(a)$ is either $0$ or $\infty$. The successive pairs $(\nu_j(a), s_j(a))$ can be
defined iteratively over \emph{maximal contact subvarieties} of increasing codimension.

For example, in year zero (i.e., if $q=0$), then $\nu_1(a) = \ord_a X$ and
$s_1(a) = \#E(a)$ (the number of components of $E$ at $a$).

The invariant $\inv$ is upper-semicontinous on each $X_q$ (where sequences of the
form \eqref{eq:inv} are ordered lexicographically), and \emph{infinitesimally upper-semicontinuous}
in the sense that $\inv$ can only decrease after blowing up with $\inv$-admissible centre.

For any positive integer $j$, let $\inv_j$ denote the truncation of $\inv$ after the 
$j$th pair $(\nu_j, s_j)$; i.e., $\inv_j(a) = \inv(a)_j$, where the latter means $\inv(a)$
truncated after the pair $(\nu_j(a), s_j(a))$ (and $\inv_j(a) := \inv(a)$ if $j>p$).

The truncated invariant $\inv_j$ can, in fact, be defined step-by-step over a sequence of 
blowings-up \eqref{eq:blup1}, where each successive blowing-up is 
$\inv_j$-\emph{admissible}. Moreover, $\inv_j$ is upper-semicontinuous on each $X_q$
and infinitesimally upper-semicontinuous. Blowings-up that are admissible for $\inv_j$
are not necessarily admissible for $\inv$, but they are admissible for $X$ in the sense 
of Definition \ref{def:admiss}, or for $(X,E)$ in the more general sense of Section \ref{sec:alg}.

\begin{remark}\label{rem:trunc}
If $I$ denotes the maximum value of $\inv$ on a given open set $U$ in $Z_q$, then the
truncation $I_j$ is the maximum value of the truncated invariant $\inv_j$ on $U$
(because of the lexicographic order).
\end{remark}

\begin{remark}\label{rem:memory1}
As stated above, $\inv$ is defined recursively over a sequence of $\inv$-admissible blowings-up; 
more precisely, $\inv$ in year $q$ depends on the previous blowings-up $\s_1,\ldots,\s_q$. This
memory, or dependence on the previous history, is encoded by the $s_j$ entries in $\inv$, which
count the number of components of $E_q$ in certain subblocks of the latter.

In articles on the desingularization algorithm, the notation $\inv = \inv_{X,E}$ is used for $\inv$
as defined recursively over the particular sequence of $\inv$-admissible blowings-up used
in the desingularization algorithm, where the data in any year $q$ depends ultimately only
on the year zero data $(X_0,E_0) = (X,E)$.
\end{remark}

\subsection{Cleaning algorithm}\label{subsec:clean}
An algorithm for resolution of singularities as in \cite{BMinv}, \cite[\S5]{BMfunct} involves factoring
an ideal into its \emph{monomial part} $\cM$, generated locally by a monomial in components of the exceptional
divisor, and \emph{residual part} $\cR$, divisible by no such component; the residual ideal is resolved
first, to reduce to the monomial case where there is a simple combinatorial version of resolution of
singularities.

The cleaning algorithm following reverses this process in a certain sense,
resolving the monomial part directly to obtain a simpler \emph{clean} ideal or singularity. 

\begin{proof}[Proof of Theorem \ref{thm:limintro}]
By Theorem \ref{thm:splitintro}, after blowing up the discrete set of non-$\nc(k)$ points of $S$
finitely many times, we can assume that the ideal of $X$ is generated locally at each non-$\nc(k)$ point
of $S$ by a function \eqref{eq:splitintro} satisfying the conclusion of Theorem \ref{thm:splitintro}.

We will show that the conclusion of Theorem \ref{thm:limintro} holds after a finite sequence
of cleaning blow-ups of $U$, followed by the additional blowings-up of Remark \ref{rem:D_1trick}.

Take $j<k$. Let $T_j$ denote the locus $\{\inv_j = \inv(\nc(k))_j\}$ of points $a\in U$
where $\inv_j(a) = \inv(\nc(k))_j$. Then $T_j$ is a closed subset of $U$, by Remark \ref{rem:trunc}.
Following the proof of the desingularation theorem in \cite{BMfunct}, \cite[Appendix]{BMmin1}, 
$T_j$ is \emph{locally} the cosupport of a marked ideal $\ucC^j = (\cI^j, d^j)$ on a maximal contact
subvariety $N^j$ of codimension $j$ in $U$. (Here $\cI^j$ is an ideal in $\cO_{N^j}$, $d^j$ is a
positive integer, and $\cosupp \ucC^j := \{a\in U: \ord_a\cI^j \geq d^j\}$. See the preceding
references for a detailed exposition of all these notions.)

Let $\cI^j = \cM(\ucC^j)\cdot\cR(\ucC^j)$ denote the factorization of $\cI^j$ into its monomial
and residual parts: the \emph{monomial part} $\cM(\ucC^j)$ is an ideal generated locally by a monomial in components
of the exceptional divisor transverse to $N^j$, and the \emph{residual ideal} $\cR(\ucC^j)$ is divisible by no such component.
Let $\ucM(\ucC^j)$ denote the marked ideal $(\cM(\ucC^j), d^j)$. The $\cosupp \ucM(\ucC^j) \subset \cosupp \ucC^j$,
and any sequence of blowings-up that is admissible for the marked ideal $\ucM(\ucC^j)$ is also admissible for $\ucC^j$
(where a blowing-up is \emph{admissible} for a marked ideal if its centre lies in the cosupport and is snc with the
exceptional divisor).

The exponents (each divided by $d_j$) of a local monomial generator of $\ucM(\ucC^j)$ are invariants
of $(X,E)$ (in particular, independent of the choice of a local maximal contact subvariety), 
by \cite[Thm\,6.2]{BMfunct}. By combinatorial
resolution of singularities in the monomial case, there is an invariantly defined $\inv_j$-admissible sequence
of blowings-up, after which $\cosupp \ucM(\ucC^j) = \emptyset$.

We call the blowings-up in such a sequence \emph{cleaning blow-ups}. The centres of these
cleaning blow-ups are invariantly defined closed subsets of $\{\inv_j = \inv(\nc(k))_j\}$.

To complete the proof of Theorem \ref{thm:limintro}, we apply the preceding cleaning algorithm
successively, for each $j=k-1,\ldots,1$, and afterwards make the additional blowings-up described in Remark \ref{rem:D_1trick}. 
The cleaning blow-ups involved coincide with those described
locally in the proof of Theorem \ref{thm:lim}. So Theorem \ref{thm:limintro} follows from Theorem \ref{thm:lim}.
\end{proof}

\begin{definition}\label{def:clean}
We say that the coefficient ideal $\ucC^j$ above is \emph{clean} at a point $a$ if $a\notin \cosupp \ucM(\ucC^j)$.
We say also that $X$ is \emph{clean} at $a$ if $\ucC^j$ is clean at a $a$, for $j=1,\ldots,k-1$.
\end{definition}

\begin{remark}\label{rem:clean}
Normal crossings singularities, for example, are clean, and circulant singularities
are clean, according to the proof of Theorem \ref{thm:lim}.

Theorem \ref{thm:limintro} will be applied in Section \ref{sec:alg} to an open set $U$, where
$U$ is the complement of a closed algebraic (or analytic) set $\Sigma$, and $X$ is clean
in a neighbourhood of $\Sigma$ in $U$. In this situation, the centres of the blowings-up involved in
Theorem \ref{thm:limintro} are smooth closed subsets of $X$ containing no clean points.
\end{remark}

\begin{remark}\label{rem:cleanlem}
The transform $\ucM(\ucC^j)'$ of $\ucM(\ucC^j)$ by a cleaning blow-up does not, in general,
coincide with the monomial part $\ucM((\ucC^j)')$ of the transform of $\ucC^j$ because the exceptional
divisor may factor from the pull-back of $\cR(\ucC^j)$. So monomial desingularization of $\ucM(\ucC^j)$
does not guarantee that $\ucC^j$ becomes clean.

The \emph{cleaning lemma} \cite[Lemma 2.1]{BMmin1} provides simple sufficient conditions
(which are satisfied in Theorem \ref{thm:limintro}) for desingularization of $\ucM(\ucC^j)$ to lead to
a clean ideal $\ucC^j$. We do not need the cleaning lemma in the proof of Theorem \ref{thm:limintro}
because the explicit local computation in the proof of Theorem \ref{thm:lim} shows that all $\ucC^j$ become
clean. So we do not go into the details of the preceding paragraph.
\end{remark}

\smallskip
\section{Limits of triple normal crossings}\label{sec:triplenc}

A proof of our main
Conjecture \ref{conj:min} following the approach of this article \emph{requires} an analogue of
Theorem \ref{thm:limintro} for any $k < n$. We provide this here
for $k\leq 3$; see Theorem \ref{thm:lim3intro} following. The analogue of Theorem \ref{thm:lim3intro}
in the case $k=2$ is much simpler, and is also proved in \cite{BMmin1}. In particular, in the case that
$n=5$, these results together with Theorem \ref{thm:limintro} give the required analogue of the latter
for any $k<n$. We will use this in Section \ref{sec:alg} to prove Theorems \ref{thm:dim4} and \ref{thm:nc(3)}.

\begin{theorem}\label{thm:lim3intro}
Consider an embedded hypersurface $X\hookrightarrow Z$.
Let $U$ denote an open subset of $Z$. Assume that (after an $\inv$-admissible sequence of blowings-up)
the maximum value of $\inv$
on $U$ is $\inv(\nc(3)) = (3,0,1,0,1,0,\infty)$, so that the stratum $S := \{\inv = \inv(\nc(3))\}$ is a smooth
subvariety of dimension $n-3$ in $U$, where $n = \dim Z$. 
Suppose $X$ is generically $\nc(3)$ on $S$. Then there is a finite sequence of $\inv_1$-admissible blowings-up of $U$,
preserving the $\nc(3)$-locus, after which $X$
is a product of circulant singularities at every point of (the strict transform of) $S$.
\end{theorem}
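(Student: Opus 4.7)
The plan is to extend the strategy used for Theorem \ref{thm:limintro} (which handles $n = k+1$) to the present case $k = 3$ with arbitrary $n$, by combining three ingredients: (i) the multi-variable splitting theorem promised by the positive answer to Question \ref{conj:opt} for $k \leq 3$ (Proposition \ref{prop:limnc3}), (ii) the local construction of the proof of Theorem \ref{thm:lim}, and (iii) the global cleaning algorithm of \S\ref{subsec:clean}, which guarantees that the local blowings-up extend invariantly.

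First, I would reduce to a convenient local form. By Lemma \ref{lemma:genericnc}, the non-$\nc(3)$ locus $T\subset S$ is a proper closed subset of $S$. Applying $\inv_1$-admissible blowings-up supported over $T$ (these preserve the $\nc(3)$-locus because outside $T$ the invariant is already $\inv(\nc(3))$, so such centres avoid the $\nc(3)$-points), I may assume that at every point $a\in T$ there are \'etale (or analytic) local coordinates
\[
(w,u,x,z)=(w_1,\ldots,w_r,u_1,\ldots,u_q,x_1,x_2,z),
\]
with $S=\{x=z=0\}$ and the exceptional divisor $\{w_1\cdots w_r=0\}$, in which the ideal of $X$ is generated by a Weierstrass-type polynomial
\[
f(w,u,x,z)=z^3+a_1(w,u,x)z^2+a_2(w,u,x)z+a_3(w,u,x),
\]
with each $a_i$ vanishing to order $\geq i$ with respect to $(x,z)$ at $a$, and with $f$ formally splitting into three order-one factors at every point of $S\cap\{w_1\cdots w_r\neq 0\}$.

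Next, I would invoke the case $k=3$ of Question \ref{conj:opt} (Proposition \ref{prop:limnc3}): after finitely many blowings-up with centres of the form $\{z=x=w_j=0\}$, the polynomial $f$ splits over $\IC\llb u,w^{1/p},x\rrb$ for some $p$. As in Corollary \ref{cor:finitesplit} and Remark \ref{rem:finitesplit}, we may write $f=\prod_{i=1}^s f_i$ with each $f_i$ irreducible of degree $k_i$, splitting in $\IC\llb u,w^{1/k_i},x\rrb$, and $k_1+\cdots+k_s=3$. From here I would run the local construction in the proof of Theorem \ref{thm:lim} verbatim, with the single $v$ replaced by $v_1,\ldots,v_r$ (and the $u$-variables carried along as parameters): define eigenvalue-type combinations $Y_{i\ell}$ and $X_{i\ell}$ via the inverse circulant transformation \eqref{eq:inveigenmatrix}, extract monomial factors $w^{m_{i\ell}+\ell/k_i}\zeta_{i\ell}(w,u,x)$, and perform successive cleaning blow-ups with centres cut out by maximal-contact variables and (one at a time) a component $w_j$. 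The output is a product of pre-circulant singularities
\[
\prod_{i=1}^s \De_{k_i}\!\left(y_{i0},\, w^{n_{i1}+1/k_i}y_{i1},\ldots,w^{n_{i,k_i-1}+(k_i-1)/k_i}y_{i,k_i-1}\right),
\]
and the auxiliary blowings-up described in Remark \ref{rem:D_1trick} (applied chart-by-chart to reduce each $n_{ij}$ to zero) then put $X$ into the desired circulant normal form. Finally, to promote these local steps to global admissible blowings-up, I would invoke the cleaning algorithm of \S\ref{subsec:clean}: the centres used are precisely those prescribed by monomial desingularization of the monomial parts $\ucM(\ucC^j)$ of the coefficient ideals $\ucC^j$ on successive maximal-contact subvarieties (for $j=2,1$), which are invariantly defined, smooth, $\inv_1$-admissible, and avoid the $\nc(3)$-locus because $\ucC^j$ is already clean there.

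The main obstacle is the passage from the one-$w$-variable splitting of Theorem \ref{thm:splitintro} to the multi-$w$ case in the presence of the additional parameters $u_1,\ldots,u_q$; this is exactly the content of Question \ref{conj:opt}, whose positive resolution for $k\leq 3$ (Proposition \ref{prop:limnc3}) is what makes the present theorem tractable. A related subtlety is verifying that the cleaning blow-ups are genuinely $\inv_1$-admissible and preserve the full $\nc(3)$-locus (not merely the open part lying over $S\setminus T$); this reduces, via \cite[Thm.\,6.2]{BMfunct}, to the fact that the monomial parts of $\ucC^1$ and $\ucC^2$ are invariants of $(X,E)$ independent of local choices, together with the fact that circulant and normal-crossings singularities are already clean in the sense of Definition \ref{def:clean}, so the cleaning centres automatically avoid $X^{\nc}$.
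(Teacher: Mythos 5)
Your overall architecture matches the paper's: reduce to the local Weierstrass form of Remark \ref{rem:lim3intro}, split via Proposition \ref{prop:limnc3}, put $f$ into circulant normal form by the method of Theorem \ref{thm:lim}, and globalize by cleaning. The gap is in the step where you propose to run the local construction of Theorem \ref{thm:lim} ``verbatim, with the single $v$ replaced by $v_1,\ldots,v_r$.'' It is not verbatim, and the difference is exactly where the new content of the theorem lies. With several exceptional components the fractional exponents extracted from the $X_\ell$ are distributed over the $w_i$: the paper's Proposition \ref{prop:lim3} shows
$$
X_\ell = w_1^{m_{1\ell}+\ell/3}\,w_2^{m_{2\ell}+q_{2\ell}/3}\cdots w_s^{m_{s\ell}+q_{s\ell}/3}\, t^{n_\ell}\,\zeta_\ell,\qquad \{q_{i1},q_{i2}\}=\{1,2\},
$$
and one must first determine which $w_i$ actually require root extraction (Lemma \ref{lem:irred2pt}: for irreducible cubic $f$ each splitting exponent is $1$ or $3$, but several $w_i$ may be involved simultaneously, cf. Example \ref{ex:irred2pt}); your statement that each irreducible factor ``splits in $\IC\llb u,w^{1/k_i},x\rrb$'' silently imports the single-variable Corollary \ref{cor:finitesplit} where the correct multi-variable statement is the exponent analysis of \S\ref{subsec:splitexp}. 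More seriously, after cleaning the combinatorial constraints $|\ga|<3$, $|\de|<3$ leave exactly four normal forms, and besides $\nc(3)$, $\cp(3)$ and the genuinely pre-circulant $\De_3(z,w_1^{2/3}y_1,w_1^{4/3}y_2)$ there is the case
$$
\De_3\left(z,\ w_1^{1/3}w_2^{1/3}y_1,\ w_1^{2/3}w_2^{2/3}y_2\right),
$$
which is \emph{not} of the form \eqref{eq:precirc} and is not disposed of by Remark \ref{rem:D_1trick} as you invoke it. The paper needs a separate, explicitly constructed global blow-up sequence for this case (blow up $\{z=y_1=y_2=w_1=w_2=0\}$ to introduce $D_1$, then a suitable component of the intersection of $D_1$ with the order-$3$ locus, then $X\cap D_1\cap E_2$) in order to merge the two exceptional components into a single $\cp(3)$. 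Without identifying and treating this case, your argument does not deliver circulant normal form at every point of $S$. The remaining ingredients of your proposal---the reduction via Lemma \ref{lemma:genericnc}, the use of Proposition \ref{prop:limnc3}, and the globalization of the cleaning blow-ups via the invariance of the monomial parts $\ucM(\ucC^j)$---do agree with the paper.
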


\begin{remark}\label{rem:lim3intro}
Under the hypotheses of Theorem \ref{thm:lim3intro}, the non-$\nc(3)$ points of $X$ in $S$
form a proper closed algebraic (or analytic) subset of $S$ (by Lemma \ref{lemma:genericnc}).
It follows from resolution of singularities of this subset that, after a finite number of $\inv$-admissible
blowings-up, we can assume that every non-$\nc(3)$ point $a$ of $S$ has an \'etale (or analytic) 
neighbourhood in $Z$ with coordinates $(w,u,x,z) = (w_1,\ldots,w_r,u_1,\ldots,u_q,x_1,x_2,z)$ in
which $\{w_i=0\}$, $i=1,\ldots,r$, are the components of $E$ at $a=0$, $S = \{z=x=0\}$,
$X$ is $\nc(3)$ on $S\backslash \{w_1\cdots w_r=0\}$, and the ideal of $X$ is generated
by a function
\begin{equation}\label{eq:triplenc}
f(w,u,x,z) = z^3 -3B(w,u,x)z + C(w,u,x),
\end{equation}
where the coefficients $B,\,C$ are regular (or analytic) functions,
$f$ is in the ideal generated by $x_1,x_2,z$, and $f$ splits formally
(into three factors of order $1$) at every point where $z=x=0$ and $w_1\cdots w_r \neq 0$.
\end{remark}

Theorem \ref{thm:lim3intro} then follows from Proposition \ref{prop:lim3} below, which is 
an analogue of Theorem \ref{thm:lim}. Proposition \ref{prop:lim3} will be stated using local hypotheses as in
Theorem \ref{thm:lim}, with globalization via the cleaning algorithm, as in
\S\ref{subsec:clean}, and the analogue of Remark \ref{rem:D_1trick}.

\subsection{Splitting}\label{subsec:splitting}
\begin{proposition}\label{prop:limnc3}
Let $f$ denote a function as in \eqref{eq:triplenc} (satisfying the conditions given in Remark \ref{rem:lim3intro}).
Assume, moreover, that
\begin{equation}\label{eq:invnc3}
\inv(0) = \inv(\nc(3)) = (3,0,1,0,1,0,\infty),
\end{equation}
and that $\{w_1\cdots w_r = 0\}$ is the exceptional divisor.
Then, after a finite number of blowings-up with ($\inv$-admissible) centres of the form
$\{z=x=w_j=0\}$, for some $j$, we can assume that $f(v_1^6,\ldots,v_r^6, u,x,z)$ splits.
\end{proposition}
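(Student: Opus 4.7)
The plan is to combine Lemmas \ref{lemma:disc} and \ref{lemma:Dsq} with Cardano's formula for the depressed cubic $z^3 - 3Bz + C$. The exponent $6$ in $w_i = v_i^6$ decomposes as $6 = 2\cdot 3$: the factor $2$ is used to force the discriminant of $f$ to be a square, while the factor $3$ is used to extract the cube root appearing in Cardano's formula by killing the cyclic monodromy around $\{v_1\cdots v_r = 0\}$.

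First, the discriminant of $f$ equals $D = 27(4B^3 - C^2)$. The generic splitting hypothesis verifies the hypothesis of Lemma \ref{lemma:disc}, so $D = \Phi^2\Psi$ with $\Phi \in (x_1, x_2)$ and $\Psi$ nonvanishing off $\{w_1\cdots w_r = 0\}$. By Lemma \ref{lemma:Dsq}, finitely many $\inv$-admissible blowings-up with centres $\{z=x=w_j=0\}$ arrange that $D(u, v_1^2, \ldots, v_r^2, x)$ is a square in $\IC\llb u,v,x\rrb$. Since $-27$ is a square in $\IC$, the substitution $w_i = v_i^2$ yields $C^2 - 4B^3 = S(u,v,x)^2$ for a regular $S$. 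The Cardano resolvent quadratic $T^2 + CT + B^3 = 0$ then has regular roots $\al, \be := (-C \pm S)/2 \in \IC\llb u,v,x\rrb$ with $\al\be = B^3$, and Cardano's formula expresses the roots of $f$ as $z_\ell = \ep^\ell\ga + \ep^{-\ell}\de$, $\ell = 0,1,2$, where $\ep = e^{2\pi i/3}$ and cube roots $\ga^3 = \al$, $\de^3 = \be$ are chosen with $\ga\de = B$.

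Next, I show such cube roots exist in $\IC\llb u, s, x\rrb$ after the further substitution $v_j = s_j^3$ (so $w_j = s_j^6$). At every classical point $q \in \{z = x = 0,\, v_1\cdots v_r \neq 0\}$, the generic splitting hypothesis gives order-$1$ factors $(z - c_0)(z - c_1)(z - c_2)$ of $f$ locally, and $\ga_q := \tfrac{1}{3}(c_0 + \ep^{-1}c_1 + \ep^{-2}c_2)$ is a local cube root of $\al$ at $q$. In the UFD $R := \IC\llb u, v, x\rrb$, write $\al = U_0\cdot\prod_i p_i^{a_i}$ with $U_0$ a unit and $p_i$ irreducible. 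For each $p_i$ with $V(p_i) \not\subseteq \{v_1\cdots v_r = 0\}$, the irreducible hypersurface $V(p_i)$ is not contained in the finite union $\{v_1\cdots v_r=0\} \cup \bigcup_{i'\neq i} V(p_{i'})$, so a smooth point $q$ of $V(p_i)$ exists in $\{v_1\cdots v_r \neq 0\}$ off the other $V(p_{i'})$; at $q$ the local factorization is $\al_q = (\text{unit})\cdot g^{a_i}$ for a local defining equation $g$ of $V(p_i)$, and local cube-root existence forces $a_i \equiv 0 \pmod 3$. The remaining irreducible factors have $V(p_i) \subseteq \{v_1\cdots v_r = 0\}$, hence are associates of some $v_j$ by irreducibility. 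Thus
\[
\al \;=\; U \cdot v^A \cdot H^3 \quad \text{in } R,
\]
for a unit $U$, multi-index $A \in \IN^r$, and $H \in R$. Under $v_j = s_j^3$ this becomes $\al(u, s^3, x) = U(u, s^3, x) \cdot s^{3A} \cdot H(u, s^3, x)^3$, a cube in $\IC\llb u, s, x\rrb$ since every unit there is a cube (as $\IC$ has cube roots). The same argument produces $\de \in \IC\llb u, s, x\rrb$ with $\de^3 = \be$; multiplying $\ga$ by a suitable power of $\ep$ arranges $\ga\de = B$ (since $(\ga\de)^3 = B^3$). Therefore $f(s_1^6, \ldots, s_r^6, u, x, z) = \prod_{\ell=0}^{2}(z - \ep^\ell\ga - \ep^{-\ell}\de)$ splits in $\IC\llb u, s, x\rrb[z]$.

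The main obstacle is the UFD step above, where pointwise cube-root existence on $\{v_1\cdots v_r \neq 0\}$ must be converted into the global factorization $\al = U\cdot v^A\cdot H^3$ in $R$. This requires, for each irreducible factor $p_i$ of $\al$ whose zero locus is not supported on the exceptional divisor, locating a smooth point of $V(p_i)$ outside the other divisorial components, where the local form of $\al$ is simple enough to extract the mod-$3$ congruence on $a_i$; the \'etale or analytic setting is essential here, so that classical points near $0$ beyond the origin itself are available.
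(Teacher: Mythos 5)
Your overall architecture matches the paper's: make the discriminant a square via Lemmas \ref{lemma:disc} and \ref{lemma:Dsq}, then note that the Cardano resolvent roots $\al,\be$ (which are, up to sign, the $\eta_1^3,\eta_2^3$ of \S\ref{subsec:splitting}) become regular after $w_j=v_j^2$, and reduce the splitting of $f$ to showing that $\al$ and $\be$ are cubes in $R=\IC\llb u,v,x\rrb$ up to units and monomials in $v$. The gap is in how you extract the mod-$3$ congruence on the exponents $a_i$. You choose a smooth point $q$ of $V(p_i)$ in $\{v_1\cdots v_r\neq 0\}$ away from the other components and then invoke ``local cube-root existence'' at $q$. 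But the only points where you have established that $\al$ admits a local cube root are the points of $\{z=x=0,\ v_1\cdots v_r\neq 0\}$, where the nc hypothesis gives a local splitting of $f$; your chosen $q$ is a generic point of the hypersurface $V(p_i)$ and has no reason to lie on the codimension-two subspace $\{x=0\}$. Nor can you simply move $q$ into $V(p_i)\cap\{x=0\}$: that intersection may be entirely contained in $\{v_1\cdots v_r=0\}$ (e.g.\ for a factor such as $p_i=x_1+v_1$), and even when it is not, the other components $V(p_{i'})$ may contain all of it, destroying the clean local form $\al=(\mathrm{unit})\cdot g^{a_i}$.

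The argument can be partially repaired: if $p_i\nmid\be$, then on $V(p_i)$ one has $D=-(\al-\be)^2/27=-\be^2/27$, which is generically nonzero there, so $f$ has three distinct roots and splits locally at a generic $q\in V(p_i)$ by the implicit function theorem, and your congruence follows. The genuinely problematic factors are the common irreducible factors of $\al$ and $\be$ (equivalently, of $B$ and $C$), along which $D$ vanishes identically and no local splitting is available. Note that your proof nowhere uses the hypothesis $\inv(0)=(3,0,1,\dots)$, and this is exactly what is missing: the paper (Lemma \ref{lem:3.3}) uses that hypothesis, via Lemma \ref{lem:3.6} and the fact that $(\eta_1,\eta_2)=(x_1,x_2)$ in $S$, to prove that $w^{-\de}(C-A)$ and $w^{-\de}(C+A)$ --- i.e.\ $\al$ and $\be$ up to monomials in $w$ --- are relatively prime in $R$; since their product is a cube times a monomial in $w$, unique factorization then makes each one a cube times a monomial, with no pointwise input needed. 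You need either this relative-primality step or some other treatment of the common factors of $\al$ and $\be$ to close the gap.
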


\begin{remark}\label{rem:onDsq}
More precisely, there is a finite sequence of admissible blowings-up of $U$, as indicated, after which the
conclusion of Proposition \ref{prop:limnc3} holds at every point of $S\cap E$, where
$f$ is a local generator of the ideal of $X\subset Z$: this is a consequence of the fact that the centres of blowing up
involved in Lemma \ref{lemma:Dsq} (as used in the proof following) are each given
by the intersection of $S$ with a component of the exceptional divisor.
\end{remark}

\begin{remark}\label{rem:limnc3}
The proof of Proposition \ref{prop:limnc3} uses only $\inv(0) = (3,0,1,\ldots)$, instead of all the
information given by \eqref{eq:invnc3}.
\end{remark}

\begin{remark}\label{rem:limnc3E}
In Section \ref{sec:alg}, we will use a version of Proposition \ref{prop:limnc3} for a product of $f$ as above
with generators $y_1,\ldots,y_r$ of the ideals of the components of $E$ at $a$. See Remarks \ref{rem:Dsq}
and \ref{rem:limE}. In this context, splitting as in Proposition \ref{prop:limnc3} holds after a finite number of $\inv$-admissible
blowings-up with centres of the form $\{z=x=y=w_j=0\}$.

It is not difficult to extend Proposition \ref{prop:limnc3} to a product of functions $f$, each of order $\leq 3$;
in this article, we will need only the preceding situation, or the simpler version for $f$ of order $2$ and the components of $E$.
\end{remark}

\begin{proof}[Proof of Proposition \ref{prop:limnc3}]
The discriminant $D$ of $f$ is given by
$$
D = - \frac{1}{27}\left(C^2 - 4B^3\right).
$$
By Lemma \ref{lemma:Dsq},
after a finite number of blowings-up with centres of the form $\{z=x=w_j=0\}$, for some $j$, we can assume
that $\De(v_1^2,\ldots, v_r^2, u, x)$ is a square. The result follows from Lemma \ref{lem:3.3} below.
\end{proof}

The following two lemmas are essentially Lemmas 3.3 and 3.6 in \cite{BLMmin2}. 
We will use the fact that the first
coefficient (marked) ideal of the marked ideal $(f,3)$ is 
$$
I := \left((B^3, C^2), 6\right) =  \left((C^2, D), 6\right).
$$
Since $\inv(a) = (3,0,1,\ldots)$, we have $I = w^\ga \tI$, where $w^\ga$ is a monomial and
$\tI$ has order 6 at $0$.

Set
\begin{equation*}
R := \IC\llb w,u, x\rrb,\quad
S := \overline{\IC\llbr w\rrbr}\llb u, x\rrb.
\end{equation*}
Then $f$ splits in $S[z]$; say,
$$
f = (z + b_0)(z + b_1)(z + b_2).
$$
Moreover, each $b_j$ belongs to the ideal $(x)$. 
Define
\begin{equation*}
\eta_i := \frac{1}{3}\sum_{j=0}^2 \ep^{ij}(z+ b_j), \quad i = 0,1,2,
\end{equation*}
where $\ep =  e^{2\pi i/3}$. Then $\eta_0 = z$ and
\begin{align}\label{eq:factors}
f &= \prod_{i=0}^2\left(z + \ep^i\eta_1 + \ep^{2i}\eta_2\right)\\
  &= z^3 -3\eta_1\eta_2 z + \eta_1^3 + \eta_2^3 \notag
\end{align}
in $S[z]$. In particular,
$$
B = \eta_1\eta_2, \quad C = \eta_1^3 + \eta_2^3, \quad D = -\frac{1}{27}\left(\eta_1^3 - \eta_2^3\right)^2
$$
in $S$.

\begin{lemma}\label{lem:3.3}
Assume that $D$ is a square in $R$. Then $f(v_1^3,\ldots,v_r^3, u,x,z)$ splits.
\end{lemma}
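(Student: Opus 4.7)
My plan is: first, from the identity $D = -\frac{1}{27}(\eta_1^3 - \eta_2^3)^2$ and the hypothesis $D = E^2$ with $E \in R$, I would deduce $\eta_1^3 - \eta_2^3 = \pm 3\sqrt{-3}\,E \in R$, which combined with $\eta_1^3 + \eta_2^3 = C \in R$ gives $P := \eta_1^3$ and $Q := \eta_2^3$ in $R$, with $PQ = B^3$. Then, via Cardano's formulas, the three roots of $f = z^3 - 3Bz + C$ can be written as $-(\ep^i\alpha + \ep^{2i}\beta)$ for $i = 0,1,2$, where $\alpha, \beta$ satisfy $\alpha^3 = P$, $\beta^3 = Q$, and $\alpha\beta = B$. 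So the task reduces to exhibiting such $\alpha, \beta$ in $\IC\llb v, u, x\rrb$ after the substitution $w_i = v_i^3$.

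Next, I would take the natural candidates $\alpha := \eta_1(v^3, u, x)$ and $\beta := \eta_2(v^3, u, x)$, which satisfy the required relations in the larger ring $\overline{\IC\llbr v\rrbr}\llb u, x\rrb$, and argue that they descend to $\IC\llb v, u, x\rrb$. By Lemmas \ref{lemma:basicsplit} and \ref{lemma:finitesplit}, each of $\eta_1, \eta_2$ lies in $L\llb u, x\rrb$ for some finite normal extension $L/\IC(w)$ of degree dividing $3$; in fact $L = \IC(w)[\alpha_0]$, where $\alpha_0$ is a cube root of the ``cube-free part'' of $P$ in $\IC(w)$. Under the substitution $w_i \mapsto v_i^3$, every monomial in the $w_i$'s becomes a perfect cube in the $v_i$'s, and the plan is to argue that the image of $L$ therefore collapses into $\IC(v)$. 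Once this is done, the coefficients of $\alpha, \beta$ (as power series in $u, x$) lie in $\IC(v)$; and since $\alpha^3 = P(v^3, u, x) \in \IC\llb v, u, x\rrb$ has no poles in $v$, a valuation argument forces $\alpha \in \IC\llb v, u, x\rrb$, and then setting $\beta := B/\alpha$ ensures both $\beta \in \IC\llb v, u, x\rrb$ and $\alpha\beta = B$ (with $\beta^3 = Q$ following from $PQ = B^3$).

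The main obstacle is exactly the collapse of $L/\IC(w)$ after $w_i \mapsto v_i^3$. This is not automatic in general: for instance, $w_1 + w_2 \in \IC(w)$ has no cube root in $\IC(v)$ even after $w_i = v_i^3$, because $v_1^3 + v_2^3$ factors into three coprime irreducibles in $\IC\llb v\rrb$. Here the $\nc(3)$ hypothesis on $f$ will play the decisive role: such ``mixed'' cube-free factors of $P$ would force $f$ to acquire non-nc singularities along loci of the form $\{w_1 + w_2 = 0\}$ meeting $\{z = x = 0,\, w_1\cdots w_r \neq 0\}$, contradicting the hypothesis. I expect to make this precise by analyzing the valuation of $P$ along each height-one prime of $\IC[w]$ not supported in $\{w_1\cdots w_r = 0\}$ and concluding that this valuation must be divisible by $3$, so that the cube-free part of $P$ is supported on $\{w_1\cdots w_r = 0\}$ and automatically becomes a cube after the substitution.
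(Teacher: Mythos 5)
Your opening reduction coincides with the paper's point of departure: from $D=-\tfrac{1}{27}(\eta_1^3-\eta_2^3)^2$ being a square in $R$ one gets $A:=\eta_1^3-\eta_2^3\in R$, hence $2\eta_1^3=C+A$ and $2\eta_2^3=C-A$ in $R$ with product $4B^3$, and the lemma amounts to showing these become cubes after $w_i=v_i^3$. But the step that actually achieves this is, by your own account, only a plan, and it is exactly the hard point. The paper's proof never leaves $R$: after dividing out the largest common monomial $w^\de$, it shows $w^{-\de}(C+A)$ and $w^{-\de}(C-A)$ are relatively prime in the UFD $R$; since their product is $4w^{-2\de}B^3$, every irreducible factor other than the $w_i$ occurs to a multiple of $3$, so each of $\eta_1^3,\eta_2^3$ is a monomial in $w$ times a cube in $R$, and the substitution $w_i=v_i^3$ finishes via \eqref{eq:factors}. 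The coprimality is where the standing hypothesis $\inv(0)=(3,0,1,\ldots)$ enters: coprimality is clear in $S=\overline{\IC\llbr w\rrbr}\llb u,x\rrb$ because $(\eta_1,\eta_2)=(x_1,x_2)$ there, and the condition that $\tI$ has order $6$ forces $\ord=\ord_x$ for $w^{-\ga}D$ or $w^{-\ga}C^2$, so that Lemma \ref{lem:3.6} transfers coprimality from $S$ down to $R$. Your plan never invokes this hypothesis, and Remark \ref{rem:limnc3} indicates that the proof does use it.

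Concretely, two steps of your plan are genuine gaps. First, the ``collapse of $L$'': you would need (i) that $L$ is a Kummer extension $\IC(w)[\theta^{1/3}]$ with $\theta\in\IC(w)$ --- this does follow from $\sqrt{D}\in R$ forcing the relevant Galois group into $A_3$, but it must be argued --- and (ii) that the cube-free part of $\theta$ is a monomial in $w$. Your proposed mechanism (a non-monomial branch divisor would meet $\{z=x=0,\ w_1\cdots w_r\neq 0\}$ and contradict formal splitting there) is the right idea, but it requires identifying the coefficients $b_{j,\ga}\in L$ with the coefficients of the formal roots at a generic point of the branch divisor and showing that a generator of $L$ cannot expand formally there; none of this is supplied, and your phrasing in terms of the ``cube-free part of $P$ in $\IC(w)$'' conflates $P=\eta_1^3\in R$, which genuinely depends on $(u,x)$, with an element of $\IC(w)$. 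Second, even granting that the coefficients of $\al:=\eta_1(v^3,u,x)$ lie in $\IC(v)$, passing to $\IC\llb v,u,x\rrb$ from $\al^3\in\IC\llb v,u,x\rrb$ is not a one-line valuation argument: for a fixed irreducible $\pi\in\IC[v]$ one can kill a \emph{bounded} pole by reducing $\pi^m\al$ modulo $\pi$ and cubing, but one must first rule out unbounded pole orders among the infinitely many coefficients of $\al$ as a series in $(u,x)$ (note that $\IC(v)\llb u,x\rrb\not\subset\Frac(\IC\llb v,u,x\rrb)$, so integral closedness of $\IC\llb v,u,x\rrb$ does not apply directly). Both difficulties evaporate in the paper's coprimality argument, which I would encourage you to adopt.
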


\begin{proof}
Write $D = A^2 \in R$; we can take $A = \eta_1^3-\eta_2^3$. Recall that
$I = (B^3, C^2) = (D, C^2)= w^\ga \tI$, as above. 
Then  $w^\ga$ is the monomial in $w$ of largest exponent which factors from both $A^2,\,C^2$.
Therefore, each $\ga_k$ is even; say $\ga = 2\al$.

We have $4B^3 = (C-A)(C+A)$.

We claim that $w^{-\al}C$ and $w^{-\al}A$ are relatively prime in R. Indeed,
it is easy to check they are relatively prime in $S$ since $A = \eta_1^3 - \eta_2^3$, $C = \eta_1^3 + \eta_2^3$, and the ideal $(\eta_1,\eta_2)=(x_1,x_2)$ in $S$.
Since $\tilde I$ has order $6$, either $\ord\, w^{-\ga} D = \ord_{x} w^{-\ga} D$ or 
$\ord\, w^{-\ga} C^2 = \ord_{x} w^{-\ga} C^2$. In either case, we can use Lemma
\ref{lem:3.6} following to conclude that $w^{-\al}C$, $w^{-\al}A$ are 
relatively prime in R.

Therefore, $w^{-\de}(C-A) = 2w^{-\de}\eta_2^3$ and $w^{-\de}(C+A) = 2w^{-\de}\eta_1^3$
are relatively prime in $R$, where $\de$ denotes the largest exponent of a monomial in $w$ that divides
$C-A$ and $C+A$.  Moreover, the product $C^2 - A^2 = 4 w^{-2\de} B^3$ is a cube times a monomial
$w$ in $R$. Hence both $\eta_1^3$ and $\eta_2^3$ are cubes (times monomials in $w$)
in $R$. By \eqref{eq:factors}, $f(v_1^3,\ldots, v_r^3,u,x,z)$ splits in 
$\IC\llb v,u,x\rrb [z]$ and the result follows.
\end{proof}

\begin{lemma}\label{lem:3.6}
Let $G \in R$. Suppose that $\ord\, G = \ord_x G$. Let $\theta \in R$ be a nonunit
which divides $G$. Then $\theta$ is also a nonunit in $S$.
\end{lemma}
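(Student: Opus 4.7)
The plan is to compare two order functions on $R$, namely the $\fm$-adic order $\ord$ (with $\fm=(w,u,x)$) and the $(x)$-adic order $\ord_x$, and to show that the hypothesis $\ord G=\ord_x G$ forces the same equality for any factor of $G$ in $R$.

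First I will record two basic facts about $R=\IC\llb w,u,x\rrb$. The inequality $\ord\alpha\geq\ord_x\alpha$ holds for every $\alpha\in R$, because total degree dominates $x$-degree on any monomial. Moreover, both $\ord$ and $\ord_x$ are additive: the associated graded rings $\mathrm{gr}_{\fm}R\cong\IC[w,u,x]$ and $\mathrm{gr}_{(x)}R\cong\IC\llb w,u\rrb[x_1,x_2]$ are both domains, from which the standard argument (lift nonzero initial forms and multiply in the graded domain) gives $\ord(\alpha\beta)=\ord\alpha+\ord\beta$ and $\ord_x(\alpha\beta)=\ord_x\alpha+\ord_x\beta$. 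Writing $G=\theta\phi$ in $R$ and combining these two additivities with the hypothesis $\ord G=\ord_x G$ yields
\[
(\ord\theta-\ord_x\theta)+(\ord\phi-\ord_x\phi)=0,
\]
which together with the non-negativity of each summand forces $\ord\theta=\ord_x\theta$.

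Next I will translate this into a statement about $S=\overline{\IC\llbr w\rrbr}\llb u,x\rrb$. A unit of $S$ is precisely an element whose evaluation at $(u,x)=(0,0)$ is nonzero in $\overline{\IC\llbr w\rrbr}$, so for $\theta\in R$ the unit condition in $S$ is $\theta(w,0,0)\neq 0$ in $\IC\llb w\rrb$. Suppose for contradiction that $\theta(w,0,0)$ contains a nonzero monomial $cw^i$. Then $\theta$ has a monomial of pure $w$-type with nonzero coefficient, whence $\ord_x\theta=0$. By the previous paragraph, $\ord\theta=0$, which says $\theta(0,0,0)\neq 0$; but this contradicts the hypothesis that $\theta$ is a nonunit of $R$. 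Hence $\theta(w,0,0)\equiv 0$, and $\theta$ is a nonunit of $S$.

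I do not foresee any real obstacle. The only subtlety to flag is the additivity of the $(x)$-adic order, which requires $\mathrm{gr}_{(x)}R$ to be a domain, not merely the weaker fact that $R/(x)\cong\IC\llb w,u\rrb$ is a domain.
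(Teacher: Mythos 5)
Your proof is correct and follows essentially the same route as the paper's: both arguments rest on the additivity of $\ord$ and $\ord_x$ together with the inequality $\ord\geq\ord_x$ applied to a factorization of $G$, forcing $\ord\theta=\ord_x\theta$ and hence $\ord_x\theta\geq 1$ for a nonunit $\theta$. Your write-up is in fact a bit more careful than the paper's (which factors $G$ into irreducibles and leaves the final step implicit), notably in justifying additivity of $\ord_x$ via $\mathrm{gr}_{(x)}R$ being a domain and in spelling out why $\ord_x\theta\geq 1$ makes $\theta$ a nonunit of $S$.
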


\begin{proof}
Consider a decomposition of $G$ into irreducible factors in $R$, $G = \prod \theta_i^{m_i}$,
where the $m_i$ are positive integers. For all $i$, $\ord\, \theta_i = \ord_x \theta_i$. By the
hypothesis, $\sum m_i\ord\, \theta_i  = \sum m_i\ord_x \theta_i$. Therefore,
$\ord\,\theta_i  = \ord_x \theta_i$, for all $i$. The result follows.
\end{proof}

\subsection{Splitting exponents}\label{subsec:splitexp}
Consider $f(w,u,x,z)$ as in \eqref{eq:triplenc}.
Assume that $f$ is irreducible and that $f(v_1^{p},\ldots,v_r^{p},u,x,z)$ splits, for some $p$. Let $S_3$ denote the 
group of permutations of the the roots of $f(v_1^{p},\ldots,v_r^{p},u,x,z)$. Then
$(\IZ_{p})^r$ maps onto a subgroup of $S_3$ which acts transitively on the
roots (since $f$ is irreducible). It follows (as in the proof of Lemma \ref{cor:finitesplit}(2)) that $f(v_1^{q_1},\ldots,v_r^{q_r},u,x,z)$ splits, 
where, for each $i=1,\ldots,r$, $q_i \leq 3$ and the group $\IZ^{(i)}_{q_i} := \{1\}^{i-1} \times\IZ_{q_i}\times\{1\}^{r-i}$ maps
 onto a cyclic subgroup  $\IZ_{q_i}$ of $S_3$; 
moreover, $\IZ_{q_1} \times \cdots \times \IZ_{q_r}$ maps onto a subgroup $G$ of $S_3$ that 
acts transitively on the roots of $f(v_1^{q_1},\ldots,v_r^{q_r},u,x,z)$.
 
 \begin{lemma}\label{lem:irred2pt}
Suppose $f(w,u,x,z)$ is irreducible and $f(v_1^{q_1},\ldots,v_r^{q_r},u,x,z)$ splits, where $q_1,\ldots,q_r$ 
are chosen as above. Then $q_i = 1$ or $q_i = 3$, for each $i=1,\ldots,r$. 
\end{lemma}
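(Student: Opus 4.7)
The plan is to use the Galois theory of the Kummer-type tower $\IC\llb w,u,x\rrb\hookrightarrow\IC\llb v,u,x\rrb$, where $v_i^{q_i}=w_i$. Writing $K:=\Frac(\IC\llb w,u,x\rrb)$ and $K':=\Frac(\IC\llb v,u,x\rrb)$, the first step is to verify that $K'/K$ is Galois with abelian Galois group $\prod_{i=1}^r\IZ_{q_i}$, where the $i$-th factor acts by $v_i\mapsto\zeta_iv_i$ for a primitive $q_i$-th root of unity $\zeta_i\in\IC$ and fixes the remaining $v_j$. This is the standard Kummer structure in the formal power series setting; the essential ingredient is that $\IC\llb v,u,x\rrb$ is free of rank $\prod q_i$ over $\IC\llb w,u,x\rrb$, with monomial basis $\{v_1^{a_1}\cdots v_r^{a_r}:0\le a_i<q_i\}$ on which the group acts by scaling.

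Next, since by hypothesis $f$ splits in $\IC\llb v,u,x\rrb[z]$, its splitting field $L$ over $K$ sits inside $K'$. Therefore $\mathrm{Gal}(L/K)$ is a quotient of the abelian group $\prod\IZ_{q_i}$, hence abelian. As $f$ is irreducible of degree $3$, the Galois action on the three roots is faithful and transitive, exhibiting $\mathrm{Gal}(L/K)$ as a transitive abelian subgroup of $S_3$. The only such subgroup is the alternating group $A_3\cong\IZ_3$, so $\mathrm{Gal}(L/K)\cong\IZ_3$.

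To finish, I would invoke the minimality built into the choice of the $q_i$: by construction the image of $\IZ_{q_i}^{(i)}$ inside $\mathrm{Gal}(L/K)\subset S_3$ has order exactly $q_i$. Since $\mathrm{Gal}(L/K)\cong\IZ_3$ possesses only subgroups of orders $1$ and $3$, we must have $q_i\in\{1,3\}$, as claimed.

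The step I expect to require the most care is the first one, verifying that the formal power series extension $K'/K$ really is Galois with precisely the asserted abelian structure (and not merely that $\prod\IZ_{q_i}$ embeds in its automorphism group). Once this Kummer-type identification is pinned down, everything else reduces to the subgroup structure of $S_3$.
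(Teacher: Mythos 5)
Your proof is correct, and it takes a genuinely different route from the paper's. The paper argues in two hands-on steps: first, if every $q_i\leq 2$ then the group $\IZ_{q_1}\times\cdots\times\IZ_{q_r}$ has $2$-power order and cannot act transitively on the three roots (orbit--stabilizer forces divisibility by $3$); second, a mixed case with some $q_i=3$ and some $q_j=2$ is excluded by explicitly factoring $f$ after adjoining only the cube root, and observing that the two roots of the quadratic factor depend on $v_j$ itself while the roots produced by the $\IZ_3$-action depend only on $v_j^2$ --- a contradiction. Your argument replaces both steps by the single structural observation that the splitting field $L$ of $f$ lies inside the abelian (Kummer) extension $K'/K$ with group $\prod\IZ_{q_i}$, so $\mathrm{Gal}(L/K)$ is a transitive abelian subgroup of $S_3$ and hence is $A_3\cong\IZ_3$; the minimality in the choice of the $q_i$ (the paper's \S5.2 stipulates that $\IZ_{q_i}^{(i)}$ maps \emph{onto} a cyclic subgroup of order $q_i$) then forces $q_i\in\{1,3\}$. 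The pieces you flag as needing care are indeed the ones to check, but they are routine: $\IC\llb v,u,x\rrb$ is free over $\IC\llb w,u,x\rrb$ on the monomials $v^a$, $0\le a_i<q_i$, so $[K':K]=\prod q_i$, and Artin's theorem applied to the faithful scaling action identifies the Galois group; irreducibility over $\IC\llb w,u,x\rrb$ passes to $K$ by Gauss's lemma. Your approach is shorter and scales better (for irreducible $f$ of any degree $k$ it shows $\mathrm{Gal}(L/K)$ is a regular abelian subgroup of $S_k$, constraining all the $q_i$ at once), whereas the paper's is elementary and stays within the explicit root-permutation computations used elsewhere (e.g.\ in Corollary 3.4).
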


\begin{proof}
For each $i$, the translates of the subgroup $\IZ_{q_i}$ of $S_3$ above by the elements of $G$ provide a partition
of the set of roots of $f(v_1^{q_1},\ldots,v_r^{q_r},u,x,z)$ into subsets of $q_i$ elements; therefore, $q_i$ divides $3$.
\end{proof}

\begin{example}\label{ex:irred2pt}
Consider the splitting 
$$
z^2 -w_1w_2x^2 = \left(z-w_1^{1/2} w_2^{1/2}x\right)\cdot\left(z+w_1^{1/2} w_2^{1/2}x\right).
$$
Both $q_1=2$ and $q_2=2$ are needed for a splitting, although each $\IZ_{q_i}^{(i)}$ maps onto
the cyclic group $\IZ_2  = S_2$ (which acts transitively on the roots).
\end{example}

\subsection{Circulant normal form}\label{subsec:circ3}

Theorem \ref{thm:lim3intro} is a consequence of the following result, which we prove in this subsection.

\begin{proposition}\label{prop:lim3}
Assume that (after an $\inv$-admissible sequence of blowings-up) $X\subset Z$ is defined locally at a given
point by a function 
$$
f(w,u,x,z) = f(w_1,\ldots,w_r,u_1,\ldots,u_q,x_1,x_2,z)
$$
as in \eqref{eq:triplenc}, where
$f$ is $\nc(3)$ on $\{z=x=0,\,w_1\cdots w_r \neq 0\}$, $\{w_1\cdots w_r = 0\}$ is the exceptional divisor,
and $\inv(0) = \inv(\nc(3))$. Then there is a finite sequence of $\inv_1$-admissible blowings-up that are isomorphisms
over the $\nc$ locus, after which the only non-$\nc(3)$ singularities in the stratum $S$ given by the closure
of the $\nc(3)$ points are products of circulant singularities.

In particular, if $f$ is irreducible, then we reduce to the case that the only non-$\nc(3)$ singularities in $S$
are circulant singularities $\De_3(z, w_i^{1/3}y_1, w_i^{2/3}y_2)$, for some $i=1,\ldots,r$.
\end{proposition}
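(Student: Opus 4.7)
The plan is to adapt the construction from the proof of Theorem \ref{thm:lim} (Tschirnhausen transformation, inverse eigenvalue transformation, and cleaning blow-ups) to the multivariate $w$-setting, using Proposition \ref{prop:limnc3} in place of Theorem \ref{thm:splitintro}. I first treat the irreducible case and then extend to products as in the \emph{General case} portion of the proof of Theorem \ref{thm:lim}. Assuming $f$ irreducible, Proposition \ref{prop:limnc3} combined with Lemma \ref{lem:irred2pt} shows that, after finitely many $\inv$-admissible blowings-up with centres of the form $\{z=x=w_j=0\}$, we may reindex so that $q_1 = \cdots = q_s = 3$ and $q_{s+1} = \cdots = q_r = 1$, with $f(v_1^3, \ldots, v_s^3, w_{s+1}, \ldots, w_r, u, x, z)$ splitting formally. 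The action of $(\IZ_3)^s$ on the three roots factors through a surjection $\pi : (\IZ_3)^s \twoheadrightarrow \IZ_3 \subset S_3$ encoded by a tuple $(a_1, \ldots, a_s) \in \{1, 2\}^s$.

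Since $f$ already has no $z^2$-term, $X_0 = z$, and the remaining components $X_1, X_2$ of the inverse eigenvalue transformation \eqref{eq:inveigenmatrix}, by invariance under $\ker \pi$, satisfy
\[
X_\ell = w_1^{m_{\ell,1} + \ell a_1/3} \cdots w_s^{m_{\ell,s} + \ell a_s /3}\,\zeta_\ell, \qquad \ell = 1, 2,
\]
with each $m_{\ell,i} \in \IN$, each $\zeta_\ell \in \IC\llb w, u, x \rrb$ not divisible by any $w_i$, and $\zeta_\ell$ of $x$-order $1$ (from $\inv(0) = \inv(\nc(3))$). The coefficient ideal of $(f, 3)$ on the maximal contact subvariety $N^1 = \{z = 0\}$ is then equivalent to the marked ideal $((X_1^3, X_2^3), 3)$, and successive cleaning blow-ups with $\inv_1$-admissible centres $N^j \cap \{w_i = 0\}$ (as in the proof of Theorem \ref{thm:lim}, with an appropriate ordering of indices) reduce each $m_{\ell,i}$ to zero. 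A formal change of variables $(y_1, y_2) := (\zeta_1, \zeta_2)$ brings $f$ to the local pre-circulant normal form
\[
\De_3\bigl(z,\, w_1^{a_1/3}\cdots w_s^{a_s/3}\, y_1,\, w_1^{2a_1/3}\cdots w_s^{2a_s/3}\, y_2\bigr) = 0.
\]

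To reduce to the stated circulant normal form involving a single exceptional variable, I invoke the astuce of Remark \ref{rem:D_1trick}: an initial blow-up of the origin creates a global exceptional divisor $D_1$, after which iterated admissible blowings-up of centres of the form $\{z = x = 0, w_i = w_j = 0\}$ (for indices with $a_i, a_j \neq 0$) progressively concentrate the fractional exponents onto one exceptional variable $w_{i_0}$, the other $w_i$'s having been absorbed into the new coordinates $y_1, y_2$ via integer powers that can be re-cleaned. A combinatorial induction on $s$, exploiting that the map $(a_i, a_j) \mapsto (a_i + a_j \bmod 3,\,a_j)$ eventually kills one coordinate, shows that finitely many such blow-ups suffice. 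The general case $f = f_1 \cdots f_s$ is then handled factor-by-factor following the additive decomposition of the coefficient ideal from Theorem \ref{thm:lim}; global admissibility of the entire construction is ensured by the cleaning algorithm of \S\ref{subsec:clean}. The main obstacle is the last concentration step: one must verify that the iterated toric-type blowings-up maintain smoothness and snc of centres with the growing exceptional divisor, preserve $\inv_1$-admissibility, and clean the two monomial parts (for $X_1$ and $X_2$) compatibly along the maximal contact chain $N^1 \supset N^2$.
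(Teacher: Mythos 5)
Your overall route is the paper's: split via Proposition \ref{prop:limnc3} and Lemma \ref{lem:irred2pt}, apply the inverse eigenvalue transformation \eqref{eq:inveigenmatrix}, clean, and then use the astuce of Remark \ref{rem:D_1trick}. But there is a genuine gap in the middle. Cleaning does \emph{not} ``reduce each $m_{\ell,i}$ to zero''; it empties the cosupport of the monomial parts $\ucM(\ucC^1)=(w^\ga,3)$ and $\ucM(\ucC^2)=(w^\de,3)$, i.e.\ it reduces the \emph{total} exponents to $|\ga|<3$ and $|\de|<3$. This is strictly stronger than killing the integer parts: for instance your pre-circulant form with $s=3$ and $a=(1,1,1)$ has $X_1^3=w_1w_2w_3\,y_1^3$, whose monomial part has order exactly $3$, so it is not clean and cannot be an output of the cleaning algorithm you invoke for global admissibility. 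The bounds $|\ga|,|\de|<3$, combined with the fact that $\{q_{i1},q_{i2}\}=\{1,2\}$ for each $i$, force $s\leq 2$ and leave exactly four normal forms: $\nc(3)$, $\cp(3)$, $\De_3(z,w_1^{2/3}y_1,w_1^{4/3}y_2)$, and $\De_3(z,w_1^{1/3}w_2^{1/3}y_1,w_1^{2/3}w_2^{2/3}y_2)$. Your family indexed by arbitrary $(a_1,\ldots,a_s)\in\{1,2\}^s$ includes forms (e.g.\ $a=(1,2)$) that cleaning rules out, so the enumeration step is missing, and your subsequent ``concentration'' induction is attempting by hand what the cleaning bound already settles.

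This matters because the final reduction to a single exceptional variable is only tractable and globalizable once the list is down to the two residual cases above: for each of these the centres can be described invariantly (a component of the intersection of $D_1$ with the order-$3$ locus of $X$, then $X\cap D_1\cap E_2$ in the two-variable case), whereas for your general $s$-variable form the proposed centres $\{z=x=w_i=w_j=0\}$ and the claim that the intermediate re-cleanings preserve admissibility are exactly the points you flag as unverified. You should replace the ``reduce each $m_{\ell,i}$ to zero'' step by the computation of the monomial parts of $\ucC^1$ and $\ucC^2$ (noting that $\inv(0)=(3,0,1,\ldots)$ makes one exponent vector divide the other and one $\zeta_\ell$ have order $1$), derive $s\leq 2$ and the four cases, and only then run the $D_1$ trick on the two non-circulant cases. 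Minor additional omission: the variables $w_{s+1},\ldots,w_r$ with $q_i=1$ still contribute integer-exponent monomial factors $t^{n_\ell}$ to $X_1,X_2$, which must also be cleaned.
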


\begin{remark}\label{rem:lim3E}
There is again a more general statement involving products of circulant singularities as in Proposition
\ref{prop:lim3} with generators of the the ideals of the components of a simple normal crossings divisor $E$.
See Remarks \ref{rem:limE} and \ref{rem:limnc3E}.
\end{remark}

\begin{proof}
We will prove the result for $f$ irreducible, and make a remark at the end about the general case.

\medskip\noindent
\emph{Irreducible case.} We follow the outline of the proof of Theorem \ref{thm:lim}. By Lemma \ref{lem:irred2pt} (and Remark \ref{rem:onDsq}), 
we can assume that the function 
$f(v_1^3,\ldots, v_s^3,w_{s+1},\ldots, w_r, u,x,z)$ splits, for some $s$, $1\leq s\leq r$, and has
zeros (as a polynomial in $z$) of the form $-b(\ep^{\ell_1}v_1,\ldots,\allowbreak \ep^{\ell_s}v_s, t,u,x)$, where $\ep=e^{2\pi i/3}$
and $t:= (w_{s+1},\ldots, w_r)$. Set
\begin{equation*}
Y_\ell := z + b(\ep^\ell v_1,v_2, \ldots, v_s, t,u,x), \quad \ell = 0,1,2,
\end{equation*}
and define $X_0,X_1,X_2$ by \eqref{eq:inveigenmatrix}; i.e.,
\begin{align*}
X_0 &= \frac{1}{3}\sum_{j=0}^{2} Y_j = z,\nonumber \\
X_\ell &= \frac{1}{3} \sum_{j=0}^{2} \ep^{\ell(3-j)} Y_j =  \frac{1}{3} \sum_{j=0}^{2} \ep^{\ell(3-j)} b(\ep^j v_1,v_2,\ldots,v_s,t,u,x),\quad \ell =1,2.
\end{align*}

For each $\ell =1,2$, $v_1^{3-\ell} X_\ell$ is invariant under the action of the group $\IZ_3$ of cube
roots of unity, induced by $(v,t,u,x)\mapsto (\ep v_1,v_2,\ldots,v_s,t,u,x)$, so that
\begin{equation*}
v_1^{3-\ell} X_\ell = \eta_\ell (v_1^3,v_2,\ldots,v_s,t, u,x),
\end{equation*}
where $\eta_\ell(w_1,v_2,\ldots,v_s,t,u,x) \in \IC\llb w_1,v_2,\ldots,v_s,t,u,x\rrb$, $\ell =1,2$. Since each $\eta_\ell$ must, therefore, be divisible by $v_1^3$,
we can write
\begin{equation*}
X_\ell = v_1^{3 m_{1\ell} + \ell} \zeta'_\ell (v_1^3,v_2, \ldots,v_s,t,u,x) = w_1^{m_{1\ell} + \ell/3} \zeta^{(1)}_\ell(w_1,v_2,\ldots,v_s,t,u,x),
\end{equation*}
where $m_{1\ell}$ is a nonnegative integer and $\zeta^{(1)}_\ell(w_1,v_2,\ldots,v_s,t,u,x) \in \IC\llb w_1,v_2,\ldots,v_s,\allowbreak t,u,x\rrb$ is not divisible by $w_1$, $\ell =1,2$.

Likewise, the roots of $f(v_1^3,v_2^3,v_3^3,\ldots,v_s^3,t,u,x,z)=0$ are permuted by the action of $\IZ_3$ 
induced by $(v,t,u,x)\mapsto (v_1,\ep v_2,v_3,\ldots,v_s,t,u,x)$, and it follows that $X_\ell$ can be written
\begin{equation*}
X_\ell = w_1^{m_{1\ell} + \ell/3} w_2^{m_{2\ell} + q_{2\ell}/3}\zeta^{(2)}_\ell(w_1,w_2,v_3,\ldots,v_s,t,u,x),\quad \ell = 1,2,
\end{equation*}
where $\zeta^{(2)}_\ell$ is divisible by neither $w_1$ nor $w_2$, and $\{q_{21},q_{22}\} = \{1,2\}$.

We repeat this process for $w_3,\ldots,w_s$, and conclude that
\begin{align*}
X_\ell &= w_1^{m_{1\ell} + \ell/3} w_2^{m_{2\ell} + q_{2\ell}/3} \cdots w_s^{m_{s\ell} + q_{s\ell}/3} \zeta^{(s)}_\ell(w,u,x)\\
          &= w_1^{m_{1\ell} + \ell/3} w_2^{m_{2\ell} + q_{2\ell}/3} \cdots w_s^{m_{s\ell} + q_{s\ell}/3} t^{n_\ell} \zeta_\ell(w,u,x),\quad \ell = 1,2,
\end{align*}
where $t^{n_\ell}$ is a monomial in $t = (w_{s+1},\ldots,w_r)$ (with integral exponents),
$\zeta_\ell$ is divisible by no $w_i$, $i=1,\ldots,r$, and each $\{q_{i1},q_{i2}\} = \{1,2\}$.

As in the proof of Theorem \ref{thm:lim}, the coefficient ideal of the marked ideal $(f,3)$ is equivalent to the marked ideal
\begin{multline*}
\ucC^1 := \left( \left(w_1^{3m_{11} + 1} w_2^{3m_{21} + q_{21}} \cdots w_s^{3m_{s1} + q_{s1}} t^{3n_1} \zeta_1^3,\right.\right.\\
\left.\left. w_1^{3m_{12} + 2} w_2^{3m_{22} + q_{22}} \cdots w_s^{3m_{s2} + q_{s2}}  t^{3n_2} \zeta_2^3\right), 3\right)
\end{multline*}
on the maximal contact subspace $N^1 := \{z=0\}$. 
Since $\inv(0) = (3,0,1,\ldots)$, it follows that the exponent $r$-tuple of one of the two monomials in $w$
in $\ucC^1$ (which we denote $w^\ga$) is less than the other (denoted $w^{\ga + \de}$), and the $\zeta_\ell$ 
corresponding to the first (say, $\zeta_{\ell_1}$, where $\ell_1 = 1$ or $2$)
has order $1$.

We can then apply a cleaning procedure in the proof
of Theorem \ref{thm:lim}.
We first blow up with combinatorial centres $\{w_{i_1} = \cdots = w_{i_p} =0\}$, where $p\leq 3$, in the maximal
contact subspace $N^2 = \{z=\zeta_{\ell_1}=0\}$ to reduce to $\de = (\de_1, \ldots, \de_s, 0,\ldots,0)$ with 
$|\de| = \de_1+\cdots +\de_s < 3$. Note that $|\de| < 3$ implies that $w^\de$ depends on at most two variables $w_i$. 
Moreover, using the fact that each $\{q_{i1},q_{i2}\} = \{1,2\}$ above, it is easy to see this implies also that we
have modified our expression for $\ucC^1$ in such a way that now $s\leq 2$.

For the second cleaning step, we can now
blow up with codimension one centres $\{w_i=0\}$ in $N^1 = \{z=0\}$ (which preserve $|\de|<3$) to get also 
$\ga = (\ga_1,\ldots,\ga_s,0,\ldots,0)$, where $s\leq 2$ and
$|\ga| < 3$. Set $\al := \ga/3$, $\be := \de/3$.

We conclude that, after cleaning,
$f$ can be written as
\begin{equation*}
\De_3\left(z, w^\al y_1, w^{\al + \be} y_2\right), 
\end{equation*}
where $y_1, y_2$ are suitable \'etale (or analytic) coordinates (as in the proof of Theorem \ref{thm:lim}), $w^\al$ and $w^\be$ are each
monomials in $w_1^{1/3},\ldots, w_s^{1/3}$ of order $< 1$, and
$$
\De_3\left(z, w^\al y_1, w^{\al + \be} y_2\right) = z^3 + w^{3\al} y_1^3 + w^{3(\al+\be)} y_2^3 - 3w^{2\al+\be}y_1y_2z
$$
is a polynomial in $(w,y,z)$; i.e., $w^{2\al+\be}$ has integral exponents (cf. \eqref{eq:cp3expansion}). 
The only possibilities for $w^\al$, $w^\be$ satisfying these conditions are
$$
w^\al\  =\ 1,\ \ w_1^{1/3},\ \ w_1^{2/3}\ \ \text{or}\ \ w_1^{1/3}w_2^{1/3}
$$
(after reordering the $w_i$ if necessary), and then
$$
w^\be\  =\ 1,\ \ w_1^{1/3},\ \ w_1^{2/3}\ \ \text{or}\ \ w_1^{1/3}w_2^{1/3}\ \ \text{(respectively)}.
$$

In other words, after cleaning, we reduce to four possible cases:
\begin{align*}
&\De_3(z,y_1,y_2):\qquad \nc(3),\\
&\De_3(z,w_1^{1/3}y_1,w_1^{2/3}y_2):\qquad \cp(3),\\
&\De_3(z,w_1^{2/3}y_1,w_1^{4/3}y_2) \qquad\ \  \text{(cf. Example \ref{ex:minprecirc})},\\
&\De_3(z,w_1^{1/3}w_2^{1/3}y_1,w_1^{2/3}w_2^{2/3}y_2). 
\end{align*}

In particular, we have either $\cp(3)$ or one of the pre-circulant third and fourth cases at every point of $S\cap E$, where $S$ denotes
the closure of the $\nc(3)$-locus.
The third and fourth cases can be handled as in Remark \ref{rem:D_1trick}.
For both of these cases, we first blow up with centre given by the non-$\cp(3)$ points of $S\cap E$.

In the third case, this means that (locally) we first
blow up with centre $\{z=y_1=y_2=w_1=0\}$ to introduce the divisor $D_1$. 
Afterwards, we blow up with centre given by $\{z=y_1=w_1=0\}$ in the $w_1$-chart---this extends
to a global smooth centre in $D_1$ given by a component of the intersection of $D_1$ with the locus of points of order $3$ of $f$ or $X$---and
we thereby reduce to $\cp(3)$.

In the fourth case, let $E_i$ denote the component $\{w_i=0\}$ of $E$, $i=1,2$.
The first blowing-up above means that (locally) we introduce $D_1$ by blowing up $\{z=y_1=y_2=w_1=w_2=0\}$. Then $\De_3(z,w_1^{1/3}w_2^{1/3}y_1,w_1^{2/3}w_2^{2/3}y_2)$ transforms to 
\begin{equation}\label{eq:case4}
\De_3(z,w_1^{2/3}w_2^{1/3}y_1,w_1^{4/3}w_2^{2/3}y_2)
\end{equation} 
in the $w_1$-chart, and a symmetric expression in the $w_2$-chart. 
We now blow up with centre given by $\{z=y_1=w_1=0\}$ in the $w_1$-chart and by $\{z=y_1=w_2=0\}$ in the $w_2$-chart; again this extends globally to a smooth centre given by a
component of the intersection of $D_1$ with the order $3$ locus of $X$. (More precisely, the latter intersection is $\{z = w_1 = y_1w_2 =0\}$ in the
$w_1$-chart, for example, and we are blowing up the irreducible component not contained in $D_1 \cap E_2$.)
In the new $w_1$-chart of the latter
blowing-up of \eqref{eq:case4}, we get $\De_3(z,w_1^{2/3}w_2^{1/3}y_1,w_1^{1/3}w_2^{2/3}y_2)$.
After a further blowing-up with centre $\{z=w_1=w_2=0\}$ (globally, $X\cap D_1 \cap E_2$), we have only $\cp(3)$ points.

\medskip\noindent
\emph{General case.} In the case that $f$ is not irreducible, we can also follow the proof of Theorem \ref{thm:lim}.
The result of cleaning in this case is to reduce $f$ already to a product of circulant singularities; i.e., to either
$\nc(3)$ or 
\[
\smooth \times \cp(2):\qquad y_{10}\,\De_2(y_{20}, w^{1/2}y_{21}) = y_{10}\,(y_{20}^2 - wy_{21}^2).
\]
\end{proof}

\smallskip
\section{Partial desingularization algorithm}\label{sec:alg}
Let $X \hookrightarrow Z$ denote an embedded variety ($Z$ smooth), and let $E\subset Z$ denote an snc divisor.
We say that $(X,E)$ is \emph{normal crossings} (nc) at a point $a$ if $X\cup E$ is nc at $a$.

Let $\s: Z' \to Z$ denote a blowing-up with centre $C$. We say that $\s$ is \emph{admissible} (for $(X,E)$)
if $C$ is smooth and snc with respect to $E$, and the Hilbert-Samuel function $H_{X,x}$ is locally constant (as a
function of $x$) on $C$ (cf. Definitions \ref{def:nc}, \ref{def:admiss}).
In the case that $X$ is a hypersurface (i.e., $\dim X = n-1$, where $n=\dim Z$), the latter
property is equivalent to the condition that the order $\ord_xX$ is locally constant on $C$.

Given a sequence of admissible blowings-up
\begin{equation}\label{eq:blup}
Z = Z_0 \stackrel{\s_1}{\longleftarrow} Z_1 \longleftarrow \cdots
\stackrel{\s_{t}}{\longleftarrow} Z_{t}\,,
\end{equation}
we consider successive transforms $(X_j,E_j)$ of $(X_0,E_0) := (X,E)$,
as in \S\ref{subsec:inv}.
In a given \emph{year} $j$, it will often be convenient to drop the index $j$ and simply write $M,X,E$ instead
of $M_j,X_j,E_j$.

\begin{theorem}\label{thm:mainA}
Assume that $\dim X \leq 4$. Then there is a finite sequence of admissible blowings-up \eqref{eq:blup}
such that every $\s_j$ is an isomorphism over the nc locus of $(X_0,E_0) = (X,E)$, and $X_t$
has smooth normalization.
\end{theorem}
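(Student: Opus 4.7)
The plan is to proceed by downward induction on the maximum value of the desingularization invariant $\inv$ on $X$, combining the reduction to the hypersurface case with the circulant normal form theorems already established. First, since the Hilbert--Samuel function $H_{X,x}$ detects the local embedding dimension, the standard resolution algorithm of \cite{BMinv, BMfunct} eliminates points of embedding codimension $>1$ without modifying any nc point (as the algorithm never touches the maximum strata that include an nc point in year zero). We may therefore reduce to the case where $X \hookrightarrow Z$ is an embedded hypersurface with $\dim Z \leq 5$, and $E$ is an snc divisor on $Z$.

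Next, after a further $\inv$-admissible sequence of blowings-up (which are \emph{a fortiori} admissible in the sense of Definition \ref{def:admiss}), we may assume that the maximum value of $\inv$ equals $\inv(\nc(k))$ for some $k$ with $1 \leq k \leq 5$. We now treat each stratum $S_k := \{\inv = \inv(\nc(k))\}$ by downward induction on $k$. By Lemma \ref{lemma:genericnc}, the non-$\nc(k)$ locus inside $S_k$ is a proper closed subvariety, and after resolving it (which introduces only blowings-up outside the nc locus) we may assume $X$ is generically $\nc(k)$ on $S_k$. We then apply the corresponding circulant normal form result: for $k = 5 = \dim Z$ the stratum $S_5$ is discrete and is blown up directly; for $k = 4$ we are in the situation $\dim Z = k+1$ of Theorem \ref{thm:limintro}; for $k = 3$ we use Theorem \ref{thm:lim3intro}; for $k = 2$ we use the analogue proved in \cite{BMmin1, BLMmin2}; and $k = 1$ is the smooth locus. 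All the blowings-up involved are isomorphisms over the nc locus, so after performing them for $k = 5, 4, 3, 2$ in turn, every non-nc limit point of nc is brought into circulant normal form, i.e.\ $X$ (together with $E$) is locally a product of circulant singularities, as in Remark \ref{rem:limE} and Remark \ref{rem:lim3E}.

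It remains to treat the \emph{neighbours} of these product normal forms---points that are not themselves in the circulant-normal-form pattern but are infinitesimally close. For $\dim X \leq 3$ the minimal neighbours are listed in Examples \ref{ex:min}(1) and were already handled in \cite{BLMmin2}, so the new content is for $\dim X = 4$, i.e.\ the primary limits $\cp(4)$, $\smooth \times \cp(3)$, $\cp(2)\times \cp(2)$ and $\nc(2)\times \cp(2)$, together with the neighbours $(1), (2), (2'), (3)$ of $\cp(4)$ described in Examples \ref{ex:min}(2). For each minimal singularity, smooth normalization follows from Proposition \ref{prop:circ} and the fact that the normalization of a product is the disjoint union of the normalizations of the factors, so once $X$ is brought into the explicit minimal list, the theorem is proved. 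Globalization of the locally described blowings-up is effected by the cleaning algorithm of \S\ref{subsec:clean} together with the trick of Remark \ref{rem:D_1trick}, and the corresponding blowings-up are all admissible for the truncated invariant $\inv_j$.

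The main obstacle I foresee is the $k = 4$ step in five variables. Unlike the cases $k = \dim Z$, which are isolated points disposed of by blowing up, $\cp(4)$ admits a richer family of neighbours, and the novel phenomenon illustrated by $(2')$---which is forced as a limit of configurations of type $(2)$ rather than appearing in an arbitrarily small neighbourhood of $\cp(4)$---requires identifying the precise finite list of minimal neighbours and verifying that every singularity produced by our algorithm belongs to this list. Concretely, the hard part is showing that (i) the local centres used to pass between neighbour types extend to globally defined $\inv_1$-admissible smooth centres snc with $E$, and (ii) no singularity outside the explicit minimal list can occur at the end of the procedure. Both points are expected to follow from a careful induction on the complexity of the exceptional configuration along the lines of the proof of Proposition \ref{prop:lim3}, using the cleaning lemma \cite[Lemma 2.1]{BMmin1} together with Theorems \ref{thm:splitintro} and \ref{thm:limintro} to produce circulant normal form at each generic stratum and then handle the finitely many residual non-generic configurations case by case.
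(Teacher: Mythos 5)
Your skeleton matches the paper's at a high level (reduce to an embedded hypersurface, stratify by the desingularization invariant, split and clean to circulant normal form, then control the neighbours), but there are two genuine gaps. First, your induction runs only over the strata $S_k=\{\inv=\inv(\nc(k))\}$, i.e.\ the strata $S_{k,0}$ in the paper's notation. The theorem is about the pair $(X,E)$, whose nc locus includes points where $X$ has order $p$ and meets $r$ components of $E$ with $p+r=k$; moreover exceptional components accumulate as the algorithm runs. The paper therefore iterates over \emph{all} pairs $(p,r)\in I_n$ in decreasing lexicographic order ($(5,0),(4,1),(4,0),(3,2),(3,1),(3,0),\dots$), and this is not cosmetic: to lower the maximum of $\inv$ from $\inv_{4,0}$ to $\inv_{3,0}$ the desingularization algorithm passes through the values $\inv_{3,2}$ and $\inv_{3,1}$, and blowing up those strata indiscriminately destroys nc points of the pair of order $5$ and $4$. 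Handling them produces the minimal singularities $\exc^r\times\cp(2)$, etc., and forces the bookkeeping of Remark \ref{rem:reset} (the cleaning and neighbour blowings-up are admissible but not $\inv$-admissible, so one must reset to year zero at each step) and of Remark \ref{rem:AdaptationExpCp3} and \S\ref{subsec:MinimalHigherDim} (when passing from $(p,r)$ to $(p,r-1)$ the centres acquire limit points in $\Sigma_{p,r}$ and must be shown to extend admissibly while preserving the minimal singularities there). None of this appears in your argument.

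Second, the step you defer as ``expected to follow from a careful induction'' is in fact the main content of the paper's proof: Theorem \ref{thm:mainA} is deduced from Theorem \ref{thm:mainB}, whose proof devotes \S\ref{subsec:clean1} to an explicit, chart-by-chart sequence of eight admissible blowings-up for $\cp(4)$ (and shorter sequences for $\smooth\times\cp(3)$, $\cp(2)\times\cp(2)$, $\smooth\times\smooth\times\cp(2)$), establishing Lemmas \ref{lem:cp4summary} and \ref{lem:etcsummary}. This computation is precisely what shows that the neighbour $(2')$ arises (in the $x_2www$-chart of blow-up 7), that the list of neighbours is complete, and that the local centres are globally defined and admissible. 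There is no known soft argument replacing it, so asserting it as an expectation leaves the dimension-$4$ case unproved. Your normalization argument at the end is essentially right (each factor of a product normal form splits smoothly after $w=v^{k}$, as in Proposition \ref{prop:circ} and Corollary \ref{cor:lairez}), but it only applies once membership in the finite minimal list has actually been established.
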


Theorem \ref{thm:mainA} is a corollary of the following more precise result.

\begin{theorem}\label{thm:mainB}
Assume that $\dim X \leq 4$. Then there is a finite sequence of admissible blowings-up \eqref{eq:blup}
such that every $\s_j$ is an isomorphism over the nc locus of $(X_0,E_0) = (X,E)$, and $(X_t, E_t)$
has only \emph{minimal singularities}.
\end{theorem}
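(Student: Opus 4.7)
The plan is to run the desingularization algorithm of \cite{BMinv,BMfunct}, intervening at each value of $\inv$ that equals $\inv(\nc(k))$ to put $X$ in circulant normal form via the limit theorems of Sections~\ref{sec:lim} and~\ref{sec:triplenc}, and then to reduce the remaining neighboring singularities to the explicit minimal list by a further finite combinatorial process.

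First I would reduce to the case of an embedded hypersurface $X \hookrightarrow Z$ with $\dim Z \leq 5$. The Hilbert--Samuel desingularization of \cite{BMinv,BMfunct} eliminates all points of embedding codimension $> 1$ by admissible (in the sense of Definition~\ref{def:admiss}) blowings-up, and these are automatically isomorphisms over $X^\nc$ since nc points have embedding codimension $\leq 1$. With this reduction in hand, I would run the $\inv$-guided algorithm, stopping whenever the maximum of $\inv$ on the non-nc locus takes one of the barrier values $\inv(\nc(k))$, $k = 5, 4, 3, 2$. Between barriers, the algorithm strictly decreases $\max \inv$ on the non-nc locus via admissible smooth centres that are snc with $E$ and disjoint from $X^\nc$; the case $k=1$ is trivial.

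At each barrier $I = \inv(\nc(k))$, the stratum $S := \{\inv = I\}$ is smooth of codimension $k$ in $Z$. By Lemma~\ref{lemma:genericnc}, after further $\inv$-admissible blowings-up that resolve the proper closed non-$\nc(k)$ subset of $S$, I may assume $X$ is generically $\nc(k)$ on each component of $S$. I would then apply the appropriate circulant normal form theorem: Theorem~\ref{thm:limintro} when $n = k + 1$ (in particular for $k = 4$ and, trivially, for $k = 5$, where $X$ must already be snc at the isolated stratum points), Theorem~\ref{thm:lim3intro} for $k = 3$, and the $k = 2$ analogue from \cite{BMmin1,BLMmin2}. After these interventions, in a neighbourhood of (the transform of) $S$ the pair $(X,E)$ is locally defined by the product of generators of the components of $E$ with a product of circulant singularities, as recorded in Remarks~\ref{rem:limE},~\ref{rem:limnc3E},~\ref{rem:lim3E}.

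Finally, one must check that the non-nc singularities appearing in arbitrarily small punctured neighbourhoods of each product-of-circulants point either already lie on the explicit minimal list or can be reduced there by a further finite sequence of admissible blowings-up; Theorem~\ref{thm:mainA} then follows because each minimal singularity has smooth normalization by Proposition~\ref{prop:circ} (and Corollary~\ref{cor:lairez}). This is a finite combinatorial enumeration: the $\cp(2)$ and $\cp(3)$ cases and their products with smooth factors are handled as in \cite{BLMmin2}, while the $\cp(4)$ case in five variables produces exactly the types (1), (2), ($2'$), (3) of Examples~\ref{ex:min}. The main obstacle will be the type ($2'$): it is a genuinely new phenomenon in five variables, in which an exceptional component introduced by the cleaning process carries an extra variable and forces a product inside one of the circulant coefficients. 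Showing that the list (1), (2), ($2'$), (3) is closed under admissible blowing-up and under passage to limits of $\nc(4)$ requires a careful exponent bookkeeping, modelled on the ``$D_1$-trick'' of Remark~\ref{rem:D_1trick} and the case analysis at the end of the proof of Proposition~\ref{prop:lim3}.
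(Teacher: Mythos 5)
Your overall strategy---hypersurface reduction, running the $\inv$-guided algorithm with interventions at the normal-crossings values of the invariant, passing to circulant normal form, and then enumerating neighbours---is the same as the paper's. But there is a genuine structural gap: your list of barrier values $\inv(\nc(k))=(k,0,1,0,\dots)$, $k=5,\dots,2$, only accounts for the strata $S_{k,0}$, where no component of $E$ passes through the point. The theorem concerns the pair $(X,E)$: the locus to be preserved is the nc locus of $X\cup E$, and at an nc point of order $(p,r)$ (order $p$ for $X$, $r$ components of $E$) the invariant equals $\inv_{p,r}=(p,r,1,0,\dots,1,0,\infty)$ with $p+r$ pairs. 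If you stop only at the values $\inv_{k,0}$, then ``between barriers'' the maximal stratum of the algorithm will at some point be $S_{4,1}$, $S_{3,1}$, $S_{2,r}$, etc.; these strata contain nc points of $(X,E)$, so the standard algorithm's centres are not disjoint from the locus you must preserve, and the limits of nc along these strata produce precisely the minimal singularities $\exc^r\times\cp(k)$ that your plan never treats. The paper's proof (\S\ref{subsec:overviewproof}) therefore iterates over the full lexicographic list $I_n$ of pairs $(p,r)$ with $p+r\leq n$, applying the splitting/cleaning/neighbour steps to each stratum $S_{p,r}$, the case $r>0$ being handled by replacing $\cI_X$ with the product of $\cI_X$ and the ideals of the components of $E$ (Remark \ref{rem:prod}) and by a separate blow-up analysis for $\exc^r\times\cp(2)$ in \S\ref{subsec:MinimalHigherDim}.

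A second point you gloss over is the bookkeeping of admissibility across the iteration. The cleaning and neighbour blowings-up ((B3), (B4)) are admissible but not $\inv$-admissible, so the invariant cannot simply be continued past an intervention: the paper resets to year zero at each step and reruns the algorithm on the complement of the accumulated locus $\Sigma_{p,r}$ (Remark \ref{rem:reset}). Moreover, once $r\geq 1$ the centres produced by the restarted algorithm need not be closed in $X$---they can accumulate on the previously treated $\Sigma_{p,r}$---and one must check that they extend to global admissible centres preserving the minimal singularities already achieved there (Remark \ref{rem:AdaptationExpCp3} and \S\ref{subsec:MinimalHigherDim}). Finally, a small slip: at the discrete strata $S_{5,0}$ and $S_{4,1}$ the variety need not ``already be snc''; the non-nc points there are merely isolated, so they can be blown up outright until the maximum of the invariant drops. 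None of these repairs requires an idea beyond the paper's, but without the full $(p,r)$ induction your argument does not prove the statement for the pair $(X,E)$.
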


There is an analogous version of Theorem \ref{thm:nc(3)} for a pair $(X,E)$, where we preserve
normal crossings singularities of $(X,E)$, i.e., of $X\cup E$, of order at most three, in any dimension. 
See Theorem \ref{thm:main3} below.

Normal crossings singularities and, more generally, minimal singularities, are hypersurface singularities.
The class of \emph{minimal singularities} denotes the class of products of circulant singularities
(as given by Theorems \ref{thm:limintro}, \ref{thm:lim}) and their neighbours. A \emph{neighbour} of a circulant singularity
means either a singularity that occurs in a small neighbourhood of the latter, or a limit of singularities
in a neighbourhood which cannot be eliminated. (See \S\ref{subsec:cp4summary}.) There are finitely many
minimal singularities (up to \'etale isomorphism) in Theorem \ref{thm:mainB}.

The class of \emph{minimal singularities} of $(X,E)$ means the class of minimal singularities of $X\cup E$.

Minimal singularities for $\dim X \leq 4$ have smooth normalization (see \S\ref{subsec:cp4summary}, Remark \ref{rem:cp3,2}), 
so that Theorem \ref{thm:mainA} is an immediate
consequence of Theorem \ref{thm:mainB}.

This section is devoted mainly to a proof of Theorem \ref{thm:mainB}, though the first steps below apply to any dimension.
In the case of Theorem \ref{thm:main3}, the entire argument follows 
parts of the proof of Theorem \ref{thm:mainB} that apply to any dimension, and we will add detail 
in \S\ref{subsec:MinimalHigherDim}.

In general dimension, we can reduce the theorems
to the case that $X$ is an \emph{embedded hypersurface} (i.e., $X\hookrightarrow Z$, where
$n:= \dim Z = \dim X +1$) using the standard desingularization algorithm. Indeed, the Hilbert-Samuel function
$H_{X,x}$ determines the local \emph{minimal embedding dimension} $e_{X,x} = H_{X,x}(1) - 1$, so that
the desingularization algorithm first eliminates points of embedding codimension $> 1$ without modifying
nc points.

So from now on, we assume that $X$ is an embedded hypersurface. 

\subsection{Invariant for a normal crossings singularity}\label{subsec:invnc}
Let $a\in X$. Set $p := \ord_a X$ and
$r := \#E(a)$ (the number of components of $E$ at $a$). 
We will call $(p,r)$ the \emph{order} of $(X,E)$ at $a$.
The order $(p,r)$, as a function of the point $a$, is upper-semicontinuous with respect to the 
lexicographic ordering of pairs $(p,r)$.

If $(X,E)$ has order $(p,r)$ and is nc at $a$, then the desingularization
invariant
\begin{equation}\label{eq:ncinvt}
\inv(a) = \inv_{X,E}(a) = (p,r,1,0,\ldots,1,0,\infty),
\end{equation}
where there are $p+r$ pairs (before $\infty$).
Note that $\inv_{X,E}$ here is the desingularization invariant in \emph{year zero} (before we begin blowing up; see \S\ref{subsec:inv}
and also \cite[\S{A.2}]{BMmin1}).
The condition \eqref{eq:ncinvt} does not, in general, imply that $(X,E)$ is nc at $a$, as explained
by an example in \S\ref{subsec:split}.
See \cite[Thm.\,3.4]{BDMV} for a more precise
statement about the invariant at an nc point. (Note that nc is snc in an \'etale neighbourhood.)

Let $\inv_{p,r}$ denote the right-hand side of \eqref{eq:ncinvt}, so that, in particular, $\inv_{p,0} = \inv(\nc(p))$.
Given a sequence of $\inv$-admissible blowings-up \eqref{eq:blup} and a pair of nonnegative integers
$(p,r)$, let $S_{p,r}$ denote the $\inv_{p,r}$-\emph{stratum} in a given year $j$; i.e., the locus of
points where $\inv = \inv_{p,r}$ in year $j$.

Note that, at a point $a\in S_{p,r}$ in a year $j>0$, $\ord_a X_j = p$, but the order of $(X_j,E_j)$ may be greater
than $(p,r)$ because the order of $(X_j,E_j)$ counts all components of $E_j$ at $a$, while $r$ in $\inv_{p,r}$
counts only \emph{old} components of $E_j$ (see \cite[Remark A.18]{BMmin1}).

\begin{remarks}\label{rem:ncstratum} 
(1) If $p+r > n = \dim Z$, then $S_{p,r} = \emptyset$. If $p+r = n$, then $S_{p,r}$ is a discrete subset
of $X\cup E$. 

\medskip\noindent
(2) In a year $j>0$, $(X_j,E_j)$ (or $(X,E)$, in our shorthand language above)
need not be nc at a point of a stratum $S_{p,r}$ even if $(X_j,E_j)$ is
generically nc on $S_{p,r}$; e.g., circulant singularities may occur. On the other hand,
$(X_j,E_j)$ is nc at every point of $S_{1,r}$.

\medskip\noindent
(3) Theorems \ref{thm:mainA} and \ref{thm:mainB} preserve normal crossings points of $(X,E) = (X_0,E_0)$,
but not necessarily normal crossings points of $(X_j,E_j)$, $j\geq 1$. 
\end{remarks}

\subsection{Overview of the proof}\label{subsec:overviewproof}
Given $n$, let $I_n$ denote the finite lexicographic sequence of pairs $(p,r)$, where $p+r\leq n$. Our proof of
Theorem \ref{thm:mainB} (in the case $n=5$, say) will be presented as a recursive or iterative algorithm involving
successive modification of non-$\nc$ points of the strata $S_{p,r}$, $(p,r) \in I_n$, in decreasing order; i.e., beginning
with $(p,r) = (5,0)$ and terminating with the base case $(p,r) = (1,0)$.

To prove Conjecture \ref{conj:min} in general, we would need the corresponding argument by
induction over the sequence $I_n$, for any $n$, beginning with the base case $(1,0)$. The inductive claim can be
formulated as follows.

\begin{claim}\label{claim:ind}
Given $(p,r) \in I_n$, there is an admissible sequence of blowings-up \eqref{eq:blup}, satisfying the following conditions:
\begin{enumerate}
\item each blowing-up is an isomorphism over the locus of normal crossings points of $(X_0,E_0)$ of order at most $(p,r)$;

\smallskip
\item over any open subset $U$ where $(X,E)|_U$ is normal crossings, \eqref{eq:blup} coincides with the blow-up sequence given by the desingularization algorithm, stopped when $\inv \leq \inv_{p,r}$;

\smallskip
\item $(X_t,E_t)$ has only minimal singularities.
\end{enumerate}
\end{claim}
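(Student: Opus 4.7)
The plan is to proceed by descending induction on $(p,r) \in I_n$ in the lexicographic order. The starting case is the maximum pair, for which $S_{p,r}$ is a discrete subset of $Z$ and the problem reduces to handling isolated points. In the general step, the inductive hypothesis supplies, for every $(p',r') > (p,r)$ in $I_n$, an admissible sequence satisfying (1)--(3) with $(p,r)$ replaced by $(p',r')$; in particular, $(X,E)$ has only minimal singularities at all points of $\{\inv \geq \inv_{p',r'}\}$. Starting from this stage, I would first run the desingularization algorithm of \cite{BMinv, BMfunct}, restricted to the complement of the nc locus of order at most $(p,r)$ of $(X_0,E_0)$, until the maximum of $\inv$ on the non-minimal locus equals $\inv_{p,r}$. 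By functoriality, on open sets where $(X_0,E_0)$ is nc these blow-ups are precisely those of the standard desingularization stopped at level $\inv_{p,r}$, giving (2); their centres lie in $\{\inv \geq \inv_{p,r}\}$, hence avoid nc points of order at most $(p,r)$, giving (1). Afterwards $S_{p,r}$ is a smooth closed subvariety of codimension $p+r$ in $Z$, along which $(X,E)$ is generically nc.

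By Lemma \ref{lemma:genericnc}, the non-nc points of $(X,E)$ in $S_{p,r}$ form a proper closed subvariety $T \subset S_{p,r}$. A further short sequence of $\inv$-admissible blow-ups places each $a\in T$ into a local \'etale form with coordinates $(w,u,x,z)$, in which $E = \{w_1\cdots w_r =0\}$, $S_{p,r} = \{z=x=0\}$, and the ideal of $X$ is generated by a monic Weierstrass polynomial \eqref{eq:weier} in $z$ that generically splits into $p$ factors of order one on $\{z=x=0,\, w_1\cdots w_r \neq 0\}$. The key step is now to invoke the splitting hypothesis of Question \ref{conj:opt} --- known unconditionally for $p\leq 3$ via Proposition \ref{prop:limnc3} and for $r=0$ via Theorem \ref{thm:splitintro} --- which provides finitely many admissible blow-ups with centres of the form $\{z=x=w_j=0\}$ after which $f$ splits over $\IC\llb u, w^{1/q}, x\rrb$ for some $q$. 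With the splitting in hand, the construction of Theorem \ref{thm:lim}, applied to the product of $f$ with local generators of the components of $E$ as in Remark \ref{rem:limE}, uses the inverse circulant transform \eqref{eq:inveigenmatrix} and the cleaning algorithm of \S\ref{subsec:clean} to put $f$ into product-of-pre-circulant form at every point of $T$; the $D_1$-trick of Remark \ref{rem:D_1trick} then yields genuine circulant normal form via globally admissible blow-ups.

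At this stage $(X,E)$ is a product of circulant singularities at each point of $T$, but non-minimal neighbours may persist in a tubular neighbourhood of $T$ (compare Example \ref{ex:min}\,(2)). A final short blow-up sequence, patterned on the $\cp(3)$ and $\cp(4)$ analyses indicated in Section \ref{sec:alg}, converts each such neighbour into a listed minimal singularity and establishes (3) for $(p,r)$. Throughout the step, every blow-up centre lies in the non-clean locus of $\{\inv \geq \inv_{p,r}\}$, which by Remark \ref{rem:clean} is disjoint from all nc points of order at most $(p,r)$ of $(X_0, E_0)$, so (1) is preserved; on nc open subsets, all cleaning, splitting, and neighbour-elimination blow-ups are isomorphisms, so the sequence restricts to the standard desingularization stopped at level $\inv_{p,r}$ and (2) is preserved. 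The induction then advances to the next smaller pair.

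The principal obstacle is the splitting hypothesis of Question \ref{conj:opt} outside the cases already treated. For general $(p,r)$ with both $p \geq 4$ and $r \geq 1$, one must uniformly bound the denominators of the Puiseux-type roots of $f$ with respect to all exceptional parameters $w_j$ simultaneously, in the presence of tangential parameters $u$. The Soto--Vicente theorem \cite{SV} and its use in \S\ref{subsec:SV} supply factorisation after an $\SLplus(k,\IZ)$-transformation, but converting this into a finite sequence of centres of the form $\{z=x=w_j=0\}$ that produces a splitting over $\IC\llb u, w^{1/q}, x\rrb$ appears to require both a parametric refinement of the Newton--Puiseux argument and a further $\inv$-admissible induction along the $u$-directions. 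This is at present the one essential gap between the techniques developed here and a complete proof of Claim \ref{claim:ind}.
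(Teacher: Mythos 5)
Your proposal reproduces, essentially verbatim, the strategy that the paper itself lays out for Claim \ref{claim:ind} in \S\ref{subsec:overviewproof} and \S\ref{subsec:concl}: descending induction on $(p,r)\in I_n$, with each step consisting of (A) the standard desingularization algorithm run in the complement of the previously treated locus $\Sigma_{p^+,r^+}$ until $\inv\le\inv_{p,r}$, followed by (B1) resolution of the non-nc locus of $S_{p,r}$, (B2) splitting, (B3) cleaning to circulant normal form, and (B4) the neighbour analysis. The gap you flag --- Question \ref{conj:opt} --- is precisely the obstacle the authors themselves identify in \S\ref{subsec:concl}; the paper does not prove the claim in general either, but only for $n\le 5$ (Theorem \ref{thm:mainB}) and for $(p,r)=(3,0)$ in arbitrary dimension (Theorem \ref{thm:main3}). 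So as a blind reconstruction your proposal is faithful to the paper's own (incomplete) argument, and your identification of the essential missing ingredient is correct.

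There are, however, two places where you understate or blur what remains to be done. First, Theorem \ref{thm:splitintro} does not give splitting ``for $r=0$'' in general: it treats only the stratum $S_{n-1,0}$, where there is a single exceptional variable $w$ and no tangential parameters $u$ (i.e.\ $k=n-1$, so the stratum is a curve); for lower strata with $r=0$ one still faces the several-variable, parametric version of the question, which is known only for $p\le 3$ via Proposition \ref{prop:limnc3}. Second, the splitting is not the only open step: the circulant-normal-form result (B3) is proved only for $k+1=n$ (Theorem \ref{thm:lim}) and for $k\le 3$ in general dimension (Theorem \ref{thm:lim3intro} and Proposition \ref{prop:lim3}), and the neighbour analysis (B4) is carried out only for the specific circulant products arising in at most five variables, together with $\cp(3)$, $\cp(2)$ and $\exc^r\times\cp(2)$ in general dimension. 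You also omit two technical devices that the paper needs even in the cases it does prove: the reset to year zero before each application of (A), forced by the fact that the cleaning and neighbour blow-ups of (B3)--(B4) are admissible but not $\inv$-admissible (Remark \ref{rem:reset}); and the extension of centres that accumulate on $\Sigma_{p,r}$ when passing from $(p,r)$ to $(p,r-1)$ with $r\ge 1$ (Remark \ref{rem:AdaptationExpCp3} and \S\ref{subsec:MinimalHigherDim}), which is what allows condition (1) to survive that passage.
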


We will make a concluding remark on a strategy to prove Claim \ref{claim:ind} by induction on $(p,r) \in I_n$,
in \S\ref{subsec:concl} below.

Claim \ref{claim:ind} in the base case $(p,r) = (1,0)$ is an immediate consequence of resolution of singularities.  We
will need to consider item (2) of the claim with the following caveat: In the desingularization algorithm, each blowing-up
has centre given by a smooth subspace which may have several components. We allow to replace this blowing-up
by the finite number of blowings-up of the components, one at a time. Of course, the resulting morphisms are the same.
See \S\ref{subsec:MinimalHigherDim}.

Conjecture \ref{conj:min} follows from Claim \ref{claim:ind} in the
case $(p,r) = (n,0)$, for general $n$; in this case, all blowing-up are isomorphisms over the $\nc$ locus of $(X_0,E_0)$. 
Theorem \ref{thm:mainB} covers $n\leq 5$. In \S\ref{subsec:MinimalHigherDim},
we prove the claim for general $n$ and $(p,r) = (3,0)$; this gives Theorem \ref{thm:main3}.

In Theorem \ref{thm:mainB}, as well as in Theorem \ref{thm:main3}, each step of the iterative procedure
involves (A) an application of the standard desingularization algorithm, 
followed by (B) modification of the non-nc points
of a stratum $S_{p,r}$ using four additional blow-up sequences (B1)--(B4)
based on Sections \ref{sec:split}, \ref{sec:lim} and \ref{sec:triplenc} above, and
\S\S\ref{subsec:clean1}, \ref{subsec:MinimalHigherDim} below. We concentrate on Theorem \ref{thm:mainB} here,
and deal with Theorem \ref{thm:main3} in detail in \S\ref{subsec:MinimalHigherDim}.

\subsubsection{First steps}\label{subsec:first} 
Let $X$ denote an embedded hypersurface (i.e., $X\hookrightarrow Z$, where $Z$ is smooth
and $n := \dim Z = \dim X +1$), and let $E\subset Z$ denote an snc divisor.

The sequence $I$ begins $(n,0), (n-1,1), (n-1,0), (n-2,2),\ldots$.

To begin the iterative process, we use the standard desingularization algorithm to
blow up until the maximal value of $\inv$ is (at most) $\inv_{n,0}$. Then the
strata $S_{n,0}$ and $S_{n-1,1}$ are discrete, so we can blow up non-nc points of these strata to reduce to the
case that $(X,E)$ is nc in a neighbourhood of $S_{n,0} \cup S_{n-1,1}$. Set $T_{n-1,1} := S_{n,0} \cup S_{n-1,1}$,
$D_{n-1,1} := \emptyset \subset E$ and $\Sigma_{n-1,1} := T_{n-1,1} \cup D_{n-1,1}$.

We can now apply the desingularization algorithm  in the complement of $\Sigma_{n-1,1}$,
\emph{resetting the current year to year zero}, and blowing up with $\inv$-admissible centres
in the complement of $\Sigma_{n-1,1}$, stopping when the maximum value of the invariant
becomes at most $\inv_{n-1,0}$. Then the centres of blowing up involved are closed in $X$. 
Suppose the stratum $S := S_{n-1,0}$ is not empty. Then $S$ is
a smooth curve in the complement
of $\Sigma_{n-1,1}$; $S$ includes, in particular, $\nc(n-1)$ singularities and limits of 
$\nc(n-1)$ singularities of $X$.

We can blow up to eliminate any component of $S = S_{n-1,0}$ that is not generically 
normal crossings of order $n-1$.

Now, since the stratum $S$ (where $\inv_{n-1,0}$ is constant) is a smooth curve, the non-$\nc(n-1)$ points of
$S$ form a discrete subset, given by the intersection of $S$ with the exceptional divisor
(each non-$\nc(n-1)$ point is the intersection with a single component of the exceptional divisor). 

\begin{remark}\label{rem:B1}
(B1) In the general iterative step, there is an $\inv$-admissible sequence of blowings-up over the
non-nc locus in $S_{p,r}$, after which the non-nc locus
in $S_{p,r}$ lies in $E' \subset E$, where $E'$ is transverse to $S_{p,r}$
(see Lemma \ref{lemma:genericnc} and Remark \ref{rem:lim3intro}). In the case that $S$ is a curve, (B1)
is void and $E' = E$. We proceed to the following.
\end{remark}

\medskip\noindent
(B2) \emph{Splitting.} We apply Theorem \ref{thm:splitintro} in $S_{n-1,0} \cap E'$,
where we continue to write $E'$ for the appropriate transform of $E'$ above.
The blowings-up involved are again $\inv$-admissible.

\medskip\noindent
(B3) \emph{Cleaning, to get circulant normal form.} We apply Theorem \ref{thm:limintro} to obtain circulant normal
form in $S_{n-1,0} \cap E'$. The blowings-up involved are $\inv_1$-admissible. See Remark \ref{rem:reset} below.

\medskip
The preceding applies to any dimension. 

\subsubsection{Continuation in the case $n=5$}\label{subsec:next}
In this case, we proceed to modify the
non-$\nc$ points of $S_{4,0}$ to determine the neighbours of the circulant singularities given by (B3) above,
as follows.

\medskip\noindent
(B4) \emph{Moving away.}
We make a single blowing up of any non-$\nc$ point $a$ of $S_{4,0}$ to introduce a distinguished component $D_1$ of $E$,
throughout which $(X,E)$ is described by equations that are transformed from the circulant
normal form of Theorem \ref{thm:limintro} at $a$. The reason for this blowing-up is that the singularities
in a neighbourhood of a circulant point cannot be eliminated, but we do not \emph{a priori}
have good control over the limits of the neighbouring singularities arising from the previous blowings-up. 
These limiting singularities will be moved away from $D_1$.
(Compare with Remark \ref{rem:D_1trick}.)

More precisely, for each such non-$\nc$ point $a$, we make a further sequence of blowings-up with centres in $X\cap D_1$, 
following \S\ref{subsec:clean1} below, after which $\ord\, X \leq 3$ and $(X,E)$ is nc, in a neighbourhood
of $S_{4,0}\cup D_1$. The \emph{neighbours} of the given singularity in circulant normal form at $a$ are the nearby $\nc$ singularities
of $(X,E)$, together with the singularities of $(X,E)$ that live in the corresponding $D_1$.

\medskip
Let $D$ denote the union of the divisors $D_1$ above. We define $D_{4,0}$ by adjoining $D$ to $D_{4,1}$.
Let $T_{4,0}$ denote the union of (the strict transforms of) $T_{4,1}$ and $S_{4,0}$, and $\Sigma_{4,0} := T_{4,0} \cup D_{4,0}$.
These objects satisfy the following properties.
\begin{enumerate}[leftmargin=*]
\item $T_{4,0} \cap D_{4,0}$ is the set of non-nc points of $(X,E)$ in $T_{4,0}$, and $T_{4,0} \backslash D_{4,0}$
contains all $\nc$ points of order $\geq (4,0)$ of $(X_0,E_0)$.

\smallskip
\item $(X,E)$ has only minimal singularities in $D_{4,0}$ (in particular, they have smooth normalization).

\smallskip
\item There is a neighbourhood $U_{4,0}$ of $\Sigma_{4,0}$ such that $(X,E)$ is normal crossings and
$X$ has order $<4$ in $U_{4,0} \backslash \Sigma_{4,0}$.
\end{enumerate} 

\begin{remark}\label{rem:reset}
Given $(p,r) \in I$, $p\leq n-1$, let $(p,r)^+ = (p^+,r^+)$
denote the lexicographic successor of $(p,r)$; i.e.,
the smallest $(q,s) > (p,r)$ in the lexicographic order of pairs. 
When we apply the desingularization  algorithm in
the complement of $\Sigma_{p^+,r^+}$, for some $(p,r)$, as above, we first reset to year zero, and then blow up with
centres given by (A), stopping when the maximal value of $\inv$ becomes $\leq\, \inv_{p,r}$. 
In this step, the blowings-up are $\inv$-admissible for the
reset desingularization invariant (see \S\ref{subsec:inv}). It is important that 
the blowings-up involved in (B1) and (B2) are also
$\inv$-admissible because the following procedure (B3) is based on Theorems \ref{thm:limintro} and
\ref{thm:lim3intro}, which have hypotheses involving $\inv$. On the other hand, the blowings-up involved in (B3) and (B4)
are admissible (see Definition \ref{def:admiss}), but not necessarily
$\inv$-admissible. This is the reason that we have to reset to year zero in the next iterative step.

When we apply (B3) or (B4), or proceed to the next steps, we will continue
to use the notation $S_{p,r}$ for the successive strict transforms of the latter (following our convention
for the strict transforms of $X$); likewise for $T_{p,r}$ and $\Sigma_{p,r}$.
\end{remark}

We now continue to the next step. The stratum $S_{3,2}$ is discrete, so we treat it like $S_{5,0}$, $S_{4,1}$
above. Then we set $T_{3,2} := T_{4,0} \cup S_{3,2}$, $D_{3,2} := D_{4,0}$ and $\Sigma_{3,2} := T_{3,2} \cup D_{3,2}$,
and we proceed to the stratum $S_{3,1}$, repeating the process above: 

We first reset to year zero, and apply the standard
desingularization algorithm in the complement of $\Sigma_{3,2}$, stopping when the maximum value of $\inv \leq \inv_{3,1}$.
Note that all centres of blowing up are closed in $X$ (in fact, the desingularization morphism is the identity over
$U_{4,0} \backslash \Sigma_{4,0}$), because of property (3) above. The procedures (B1)--(B4) are repeated,
where Proposition \ref{prop:limnc3} and Theorem \ref{thm:lim3intro} now play the role of Theorems \ref{thm:splitintro}
and \ref{thm:limintro}, respectively, above. For details of (B4), see \S\S\ref{subsec:smoothcp3}, \ref{subsec:smoothcp3summary}.
We get $D_{3,1}$, $T_{3,1} := T_{3,2} \cup S_{3,1}$ and $\Sigma_{3,1} := T_{3,1} \cup D_{3,1}$, as before.

\medskip
A new element in the proof appears, however, when we pass from $(p,r) = (3,1)$ to $(3,0)$ (or, in general, when
we pass from $(p,r)$, where $r>1$, to $(p,r-1)$). As in property (3) above, there is a neighbourhood $U_{3,1}$ of $\Sigma_{3,1}$
such that, in $U_{3,1}\backslash \Sigma_{3,1}$, $(X,E)$ is normal crossings, but now only $\ord\, X \leq 3$. It will no longer be true,
when we reset to year zero and apply the desingularization algorithm in the complement of $\Sigma_{3,1}$, stopping when the
maximum value of $\inv = \inv_{3,0}$, that the centres of blowing up involved will be closed in $X$---they may have limit points in
$\Sigma_{3,1}$ (but not in $\Sigma_{4,0}$ or $\Sigma_{3,2}$, nor at $\nc$ points). Nevertheless, the centres of blowing up extend to admissible
centres of blowing up for $(X,E)$, and the blowings-up preserve the minimal singularities at the limit points. 
For details, see Remark \ref{rem:AdaptationExpCp3}.

\begin{remark}\label{rem:prod}
Since $X$ is a hypersurface, when $r>0$ and we apply the desingularization algorithm after resetting to year zero,
as above, the desingularization blow-up sequence for $(X,E)$ is the same as that for the ideal given by the product
of the ideal $\cI_X$ of $X$ in $\cO_Z$, and the ideals $\cI_H$ of all components $H$ of $E$. On the level of the
desingularization invariant, $\inv_{p+r,0}$ for the product ideal replaces $\inv_{p,r}$ for $(X,E)$. 
The implication for the subsequent splitting and cleaning steps (B2) and (B3) is that Theorem \ref{thm:splitintro} or
Proposition \ref{prop:limnc3} for (B2), or Theorem \ref{thm:limintro} or Proposition \ref{prop:lim3} for (B3) are simply
applied with $f$ given by a local generator of the product ideal. It is therefore not necessary to rewrite the statements
of these results to explicitly mention the case $r>0$. See Remarks \ref{rem:Dsq}, \ref{rem:limE}, \ref{rem:limnc3E} and \ref{rem:lim3E}.

The resulting circulant normal form from Theorems \ref{thm:limintro} or \ref{thm:lim3intro} for $(X,E)$ at a point of the stratum
$S_{p,r}$ will be a product of circulant singularities---the local normal form for $\cI_X$ times the $r$ smooth factors
corresponding to the components of $E$. So, if $\cI_X$ is $\cp(2)$, for example, we will write $\exc^r \times \cp(2)$ for
the local normal form of $(X,E)$.
\end{remark}

In the case $n = 5$, after dealing with the stratum $S_{3,0}$ (in a manner analogous to but simpler
than $S_{4,0}$; see \S\ref{subsec:cp4summary}), we still have to treat the strata $S_{2,r}$, $r\leq 3$, 
and $S_{1,r}$, $r\leq 4$, to complete the proof of Theorem \ref{thm:mainB}.
For Theorem \ref{thm:main3}, we will need to treat $S_{2,r}$, $r\leq n-2$, 
followed by $S_{1,r}$, $r\leq n-1$, in any dimension $n$. Details will be
provided in \S\ref{subsec:MinimalHigherDim}. This will complete the proofs of Theorems \ref{thm:mainB} and \ref{thm:main3}.

\subsection{Minimal singularities in five variables}\label{subsec:clean1}
This subsection provides details of the blow-up procedure (B4) for the strata $S_{p,r}$,
$(p,r) = (4,0)$ or $(3,1)$, in five variables, as well as for $(p,r) = (3,0)$ or $(2,0)$, in arbitrary ambient dimension $n$.
The cases $(2,r)$, $n-2 \geq r \geq 1$, are treated in \S\ref{subsec:MinimalHigherDim}.

We assume that $X$ has normal form given by Theorem \ref{thm:limintro} at a non-nc point $a$ 
of $S = S_{4,0}$; i.e., by a product of circulant 
singularities---either $\cp(4)$, $\smooth \times \cp(3)$, $\cp(2) \times \cp(2)$, or $\smooth \times \smooth \times \cp(2)$.
The case $\cp(4)$ is the most intricate, and we carry it out in detail.

\begin{remark}\label{rem:distinguishedD}
In all cases below, the distinguished divisor $D_1$ plays an important part, as explained in \S\ref{subsec:next} above.
The use of several divisors in place of $D_1$ provides more flexibility, in general; for example, we can get an analogue of 
\S\ref{subsec:cp4} following for $\cp(k)$, using $2(k-1)$ blowings-up (in particular, $6$ blowings-up for $\cp(4)$, as opposed to $9$ below). Since the use of a single distinguished divisor suffices for all results in this article, we leave the more general approach to a subsequent work.
\end{remark}

\subsubsection{Circulant point $\cp(4)$}\label{subsec:cp4}  
Let us write $\De := \De_4$.
There are \'etale coordinates $(w,x,z) = (w,x_1,x_2,x_3,z)$ at $a=0$
in which $X$ is the vanishing locus of 
$$
\De\left(z,\,w^{1/4}x_1,\,w^{2/4}x_2,\,w^{3/4}x_3\right) = \prod_{\ell=0}^3 \left(z + \ep^\ell w^{1/4}x_1 + \ep^{2\ell} w^{2/4}x_2 + \ep^{3\ell} w^{3/4}x_3\right),
$$
where $\ep = e^{2\pi i/4}$ and $\{w=0\}$ is a component of the exceptional divisor.
(We will call $\{w=0\}$ the \emph{old exceptional divisor} $D_{\text{old}}$.)
Let us enumerate the singularities of $X$ in $\{z=w=0\}$. In this 3-dimensional subspace, $X$ is smooth at a point where $x_1\neq 0$
(with tangential exceptional divisor $D_{\text{old}}$).
In $\{z=w=x_1=0\}$:
\begin{enumerate}
\item at any nonzero point of the $x_3$-axis, $X$ has order 3, and is given by the vanishing locus of
$$
\De\left(z,\,w^{1/4}x_1,\,w^{2/4}x_2,\,w^{3/4}\right),
$$
after a change of variable to absorb the unit $x_3$;
\item at any nonzero point of the $x_2$-axis, $X$ has order $2$, and is given by the vanishing locus of
$$
\De\left(z,\,w^{1/4}x_1,\,w^{2/4},\,w^{3/4}x_3\right),
$$
after a change of variable to absorb $x_2$;
\item at any point where $z=w=x_1=0$, $x_2\neq 0$, $x_3\neq 0$, $X$ also has order $2$, and is given by the vanishing locus of
$$
\De\left(z,\,w^{1/4}x_1,\,w^{2/4},\,w^{3/4}\right).
$$
\end{enumerate}
Let us explain why $X$ has isomorphic singularities at any two points in 
$\{z=w=x_1=0,\, x_2\neq 0,\, x_3\neq 0\}$; i.e., in (3) above. Note that $\De$ is homogeneous with
respect to $(x,z)$, but also weighted homogeneous with respect to $(w,x,z)$; i.e.,
$$
\De(t\cdot (w,x,z)) = t^4 \De(w,x,z),
$$
where
$$
t\cdot (w,x,z) := (tw, t^{3/4}x_1, t^{2/4}x_2, t^{1/4}x_3, tz).
$$
By homogeneity, $X$ has isomorphic singularities at any points of a curve (parametrized by $t$)
coming from either notion of homogeneity. But the families of curves coming from either notion of
homogeneity each foliate $\{z=w=x_1=0,\, x_2\neq 0,\, x_3\neq 0\}$, and any pair of curves, one
from each family, intersect.

As an essentially equivalent explanation, 
$$
\frac{1}{x_2^4}\,\De = \De\left(\frac{z}{x_2},\, w^{1/4}\,\frac{x_1}{x_2},\, w^{2/4},\, w^{3/4}\,\frac{x_3}{x_2}\right),
$$
so that
$$
\left(\frac{x_3}{x_2}\right)^8\frac{1}{x_2^4}\,\De 
= \De\left(\left(\frac{x_3}{x_2}\right)^2\frac{z}{x_2},\,\, w^{1/4}\left(\frac{x_3}{x_2}\right)^2\frac{x_1}{x_2},\,\,
w^{2/4}\left(\frac{x_3}{x_2}\right)^2,\,\, w^{3/4}\,\left(\frac{x_3}{x_2}\right)^3\right).
$$
We can now absorb units into $w,x_1,z$ to get the the normal form of item (3) above.

We will now give the remainder of the procedure (B4) of 
the minimal singularities algorithm for $\cp(4)$; i.e., we give a finite sequence of blowings-up
needed to obtain a finite collection of \emph{minimal singularities} occurring as \emph{neighbours} of
$\cp(4)$ (i.e., occurring in a small neighbourhood of $\cp(4)$ or as a limit of singularities in a neighbourhood).
The neighbours of $\cp(4)$ are the three singularities (1),\,(2),\,(3) above, together with a variant ($2'$) of (2),
all of which are listed in \S\ref{subsec:cp4summary} below.

\setlist[description]{leftmargin=\parindent}
\smallskip
\begin{description}
\item[Blow-up 1.] \emph{Introduction of a distinguished exceptional divisor $D_1$.}
Centre $= \cp(4) = 0$ in the coordinate chart above. The blowing-up is covered
by 5 coordinate charts, in each of which we will retain the same notation $(w,x_1,x_2,x_3,z)$ for the coordinates,
using the following convention.

\smallskip
\begin{description}
\item[$z$-chart.] We substitute $(wz,x_1z,x_2z,x_3z,z)$ for the original coordinates, and factor $z^4$ to
obtain the strict transform of $X$ as the vanishing locus of 
$$
\De\left(1,\,w^{1/4}z^{1/4}x_1,\,w^{2/4}z^{2/4}x_2,\,w^{3/4}z^{3/4}x_3\right).
$$
We do not need to examine this chart because the strict transform of $X$ lies entirely in the remaining charts,
following.

\smallskip
\item[$w$-chart.] We substitute $(w,wx_1,wx_2,wx_3,wz)$ to get
$$
\De\left(z,\,w^{1/4}x_1,\,w^{2/4}x_2,\,w^{3/4}x_3\right)
$$
for the strict transform. This is the same as the original formula, but the meaning of $w$ has changed---here
$\{w=0\}$ is the new exceptional divisor $D_1$ (the inverse image of the centre of blowing up), and $D_{\text{old}}$
has been moved away. Subsequent
blowings-up will have centres in $D_1$ or its successive strict transforms (which we continue to label as $D_1$).

\smallskip
\item[$x_1$-chart.] The substitution $(wx_1,x_1,x_1x_2,x_1x_3,x_1z)$ gives
$$
\De\left(z,\,w^{1/4}x_1^{1/4},\,w^{2/4}x_1^{2/4}x_2,\,w^{3/4}x_1^{3/4}x_3\right),
$$
and $D_1 = \{x_1=0\}$. (In the remaining charts, we do not write the substitution explicitly; it will follow the
same pattern, and we will describe only the strict transform and exceptional divisor. In each chart, $D_{\text{old}}$
is present as $\{w=0\}$, unless $\{w=0\}$ represents another component of $E$ as indicated, in which case $D_{\text{old}}$
does not intersect the chart.)

\medskip
\item[$x_2$-chart.] $\De\left(z,\,w^{1/4}x_2^{1/4}x_1,\,w^{2/4}x_2^{2/4},\,w^{3/4}x_2^{3/4}x_3\right)$, $D_1 = \{x_2=0\}$.

\medskip
\item[$x_3$-chart.] $\De\left(z,\,w^{1/4}x_3^{1/4}x_1,\,w^{2/4}x_3^{2/4}x_2,\,w^{3/4}x_3^{3/4}\right)$, $D_1 = \{x_3=0\}$.
\end{description}

\smallskip
\item[Blow-up 2.] Centre $=$ points of order 4 outside $\cp(4)$; this centre of blowing up is given by $D_1 \cap \{z=w=x_1=0\}$
in the $x_2$- and $x_3$-charts above. The effect of this blowing-up is to separate the $w$- and $x_3$-axes in the $x_3$-chart,
or the $w$- and $x_2$-axes in the $x_2$-chart. 

Over the $x_2$-chart, we will have 4 charts which we label as the $x_2z$-, $x_2w$-,
$x_2x_1$-, $x_2x_2$-charts, following the pattern above. We need not consider either the $x_2z$-chart (like the $z$-chart above)
or the $x_2x_2$-chart, which does not intersect (the strict transform of) $D_1$. Likewise, we do not have to consider the
$x_3z$- or $x_3x_3$-charts. Let us describe the strict transform of $X$ along with $D_1$ and the new exceptional divisor
$D_2$ in the four remaining charts.

\smallskip
\setlist[description]{leftmargin=\parindent}
\begin{description}
\item[$x_3w$-chart.] This is obtained from the substitution $(wz, w, wx_1, x_2, wx_3)$ with respect to the coordinates
of the $x_3$-chart, so we have
$$
\De\left(z,\,w^{2/4}x_3^{1/4}x_1,\,x_3^{2/4}x_2,\,w^{2/4}x_3^{3/4}\right),\
D_1 = \{x_3=0\},\ D_2 = \{w=0\}.
$$

\item[$x_3x_1$-chart.] 
\begin{equation*}
\begin{split}
\De\left(z,\,w^{1/4}x_1^{2/4}x_3^{1/4},\,w^{2/4}x_3^{2/4}x_2,\,w^{3/4}\hspace{-.4em}\right.&\left.x_1^{2/4}x_3^{3/4}\right),\\
&D_1 = \{x_3=0\},\ D_2 = \{x_1=0\}.
\end{split}
\end{equation*}

\item[$x_2w$-chart.] 
$$
\De\left(z,\,w^{2/4}x_2^{1/4}x_1,\,x_2^{2/4},\,w^{2/4}x_2^{3/4}x_3\right),\ D_1 = \{x_2=0\},
\ D_2 = \{w=0\}.
$$

\item[$x_2x_1$-chart.] 
\begin{equation*}
\begin{split}
\De\left(z,\,w^{1/4}x_1^{2/4}x_2^{1/4},\,w^{2/4}x_2^{2/4},\,w^{3/4}x_1^{2/4}\hspace{-.4em}\right.&\left.x_2^{3/4}x_3\right),\\
&D_1 = \{x_2=0\},\ D_2 = \{x_1=0\}.
\end{split}
\end{equation*}
\end{description}

\smallskip
\item[Blow-up 3.] Centre $= 0$ in the $x_3w$-chart---an isolated point of order 4. As above, we need to give the
strict transform of $X$ only in the $x_3ww$-, $x_3wx_2$- and $x_3wx_1$-charts.

\smallskip
\setlist[description]{leftmargin=\parindent}
\begin{description}
\item[$x_3ww$-chart.] 
\begin{equation*}
\begin{split}
\De\left(z,\,w^{3/4}x_3^{1/4}x_1,\,w^{2/4}x_3^{2/4}x_2,\,w^{1/4}\hspace{-.4em}\right.&\left.x_3^{3/4}\right),\\
&D_1 = \{x_3=0\},\ D_3 = \{w=0\};
\end{split}
\end{equation*}
$D_2$ has been moved away.

\smallskip
\item[$x_3wx_2$-chart.] 
\begin{equation*}
\begin{split}
\De\left(z,\,w^{2/4}x_2^{3/4}x_3^{1/4}x_1,\,\hspace{-.4em}\right.&\left.x_2^{2/4}x_3^{2/4},\,w^{2/4}x_2^{1/4}x_3^{3/4}\right),\\
&D_1 = \{x_3=0\},\ D_2 = \{w=0\},\ D_3 = \{x_2=0\}.
\end{split}
\end{equation*}

\item[$x_3wx_1$-chart.] 
\begin{equation*}
\begin{split}
\De\left(z,\,w^{2/4}x_1^{3/4}x_3^{1/4},\,\hspace{-.4em}\right.&\left.x_1^{2/4}x_3^{2/4}x_2,\,w^{2/4}x_1^{1/4}x_3^{3/4}\right),\\
&D_1 = \{x_3=0\},\ D_2 = \{w=0\},\ D_3 = \{x_1=0\}.
\end{split}
\end{equation*}
\end{description}

\smallskip
\item[Blow-up 4.] Centre $=$ points of order 4 given by $D_{\text{old}} \cap D_1 \cap D_2 \cap \{z=0\}$, in the
$x_3x_1$- and $x_2x_1$- charts. We need only consider the following:

\smallskip
\setlist[description]{leftmargin=\parindent}
\begin{description}
\item[$x_3x_1w$-chart.]
\begin{equation*}
\begin{split}
\De\left(z,\,x_1^{2/4}x_3^{1/4},\,x_3^{2/4}x_2,\,\hspace{-.4em}\right.&\left.wx_1^{2/4}x_3^{3/4}\right),\\
&D_1 = \{x_3=0\},\ D_2 = \{x_1=0\},\ D_4 = \{w=0\}.
\end{split}
\end{equation*}

\item[$x_3x_1x_1$-chart.]
\begin{equation*}
\begin{split}
\De\left(z,\,w^{1/4}x_3^{1/4},\,w^{2/4}x_3^{2/4}x_2,\,w^{3/4}\hspace{-.4em}\right.&\left.x_1x_3^{3/4}\right),\\
&D_1 = \{x_3=0\},\ D_4 = \{x_1=0\}.
\end{split}
\end{equation*}

\item[$x_2x_1w$-chart.]
\begin{equation*}
\begin{split}
\De\left(z,\,x_1^{2/4}x_2^{1/4},\,x_2^{2/4},\,w\hspace{-.4em}\right.&\left.x_1^{2/4}x_2^{3/4}x_3\right),\\
&D_1 = \{x_2=0\},\ D_2 = \{x_1=0\},\ D_4 = \{w=0\}.
\end{split}
\end{equation*}

\item[$x_2x_1x_1$-chart.]
\begin{equation*}
\begin{split}
\De\left(z,\,w^{1/4}x_2^{1/4},\,w^{2/4}x_2^{2/4},\,w^{3/4}x_1\hspace{-.4em}\right.&\left.x_2^{3/4}x_3\right),\\
&D_1 = \{x_2=0\},\ D_4 = \{x_1=0\}.
\end{split}
\end{equation*}
\end{description}

\smallskip
\item[Blow-up 5.] Centre $=$ points of order 4 given by $D_1 \cap D_3 \cap \{z=0\}$, appearing in the
three charts of blow-up 3; i.e., in the  $x_3ww$-, $x_3wx_2$- and $x_3wx_1$-charts. We need only consider
a single chart in each case.

\smallskip
\setlist[description]{leftmargin=\parindent}
\begin{description}
\item[$x_3www$-chart.] 
\begin{equation*}
\De\left(z,\,x_3^{1/4}x_1,\,x_3^{2/4}x_2,\,x_3^{3/4}\right),\
D_1 = \{x_3=0\},\ D_5 = \{w=0\}.
\end{equation*}
This singularity is a neighbour (1) of $\cp(4)$.

\smallskip
\item[$x_3wx_2x_2$-chart.] 
\begin{equation*}
\begin{split}
\De\left(z,\,w^{2/4}x_3^{1/4}x_1,\,\hspace{-.4em}\right.&\left.x_3^{2/4},\,w^{2/4}x_3^{3/4}\right),\\
&D_1 = \{x_3=0\},\ D_2 = \{w=0\},\ D_5 = \{x_2=0\}.
\end{split}
\end{equation*}

\item[$x_3wx_1x_1$-chart.] 
\begin{equation*}
\begin{split}
\De\left(z,\,w^{2/4}x_3^{1/4},\,\hspace{-.4em}\right.&\left.x_3^{2/4}x_2,\,w^{2/4}x_3^{3/4}\right),\\
&D_1 = \{x_3=0\},\ D_2 = \{w=0\},\ D_5 = \{x_1=0\}.
\end{split}
\end{equation*}
\end{description}

\smallskip
\item[Blow-up 6.] Centre $= D_1 \cap D_2 \cap \{z=0\}$, generically of order 2, appearing in the
$x_2w$-, $x_3x_1w$-, $x_2x_1w$-, $x_3wx_2x_2$- and $x_3wx_1x_1$-charts. We need to consider
the following.

\smallskip
\setlist[description]{leftmargin=\parindent}
\begin{description}
\item[$x_2wz$-chart.] 
\begin{equation*}
\begin{split}
\De\left(z^{2/4},\,z^{1/4}w^{2/4}x_2^{1/4}x_1,\, \hspace{-.4em}\right.&\left.x_2^{2/4},\, z^{3/4}w^{2/4}x_2^{3/4}x_3\right),\\
&D_1 = \{x_2=0\},\, D_2 = \{w=0\},\, D_6 = \{z=0\}.
\end{split}
\end{equation*}

\item[$x_2ww$-chart.] 
$$
\De\left(w^{2/4}z,\,w^{1/4}x_2^{1/4}x_1,\,x_2^{2/4},\,w^{3/4}x_2^{3/4}x_3\right),\ D_1 = \{x_2=0\},
\ D_6 = \{w=0\}.
$$

\item[$x_3x_1wz$-chart.]
\begin{equation*}
\begin{split}
\De\left(z^{2/4},\, z^{1/4}\hspace{-.4em}\right.&\left.x_1^{2/4}x_3^{1/4},\, x_3^{2/4}x_2,\, wz^{3/4}x_1^{2/4}x_3^{3/4}\right),\\
&D_1 = \{x_3=0\},\ D_2 = \{x_1=0\},\ D_4 = \{w=0\},\  D_6 = \{z=0\}.
\end{split}
\end{equation*}

\item[$x_3x_1wx_1$-chart.]
\begin{equation*}
\begin{split}
\De\left(x_1^{2/4}z,\,x_1^{1/4}x_3^{1/4},\,x_3^{2/4}x_2,\,\hspace{-.4em}\right.&\left.wx_1^{3/4}x_3^{3/4}\right),\\
&D_1 = \{x_3=0\},\ D_4 = \{w=0\},\ D_6 = \{x_1=0\}.
\end{split}
\end{equation*}

\item[$x_2x_1wz$-chart.]
\begin{equation*}
\begin{split}
\De\left(z^{2/4},\,z^{1/4}x_1^{2/4}x_2^{1/4},\,x_2^{2/4},\, \hspace{-.4em}\right.&\left.wz^{3/4}x_1^{2/4}x_2^{3/4}x_3\right),\\
&D_1 = \{x_2=0\},\ D_4 = \{w=0\},\ D_6 = \{z=0\}.
\end{split}
\end{equation*}

\item[$x_2x_1wx_1$-chart.]
\begin{equation*}
\begin{split}
\De\left(x_1^{2/4}z,\,x_1^{1/4}x_2^{1/4},\,x_2^{2/4},\,w\hspace{-.4em}\right.&\left.x_1^{3/4}x_2^{3/4}x_3\right),\\
&D_1 = \{x_2=0\},\ D_4 = \{w=0\},\ D_6 = \{x_1=0\}.
\end{split}
\end{equation*}

\item[$x_3wx_2x_2w$-chart.] 
\begin{equation*}
\begin{split}
\De\left(w^{2/4}z,\,w^{1/4}x_3^{1/4}x_1,\,\hspace{-.4em}\right.&\left.x_3^{2/4},\,w^{3/4}x_3^{3/4}\right),\\
&D_1 = \{x_3=0\},\ D_5 = \{x_2=0\},\ D_6 = \{w=0\}.
\end{split}
\end{equation*}

\item[$x_3wx_1x_1w$-chart.] 
\begin{equation*}
\begin{split}
\De\left(w^{2/4}z,\,w^{1/4}x_3^{1/4},\,\hspace{-.4em}\right.&\left.x_3^{2/4}x_2,\,w^{3/4}x_3^{3/4}\right),\\
&D_1 = \{x_3=0\},\ D_5 = \{x_1=0\},\ D_6 = \{w=0\}.
\end{split}
\end{equation*}
\end{description}

\smallskip
\item[Blow-up 7.] Centre $= D_1 \cap D_6$. We need to consider only the following charts, over each of
the preceding.

\smallskip
\setlist[description]{leftmargin=\parindent}
\begin{description}
\item[$x_2www$-chart.] 
$$
\De\left(z,\,x_2^{1/4}x_1,\,x_2^{2/4},\,wx_2^{3/4}x_3\right),\ D_1 = \{x_2=0\},
\ D_7 = \{w=0\}.
$$
This is a neighbour of $\cp(4)$ (with smooth normalization); see ($2'$) in \S\ref{subsec:cp4summary} below.

\smallskip
\item[$x_3x_1wx_1x_1$-chart.]
\begin{equation*}
\begin{split}
\De\left(z,\,x_3^{1/4},\,x_3^{2/4}x_2,\,\hspace{-.4em}\right.&\left.wx_1x_3^{3/4}\right),\\
&D_1 = \{x_3=0\},\ D_4 = \{w=0\},\ D_7 = \{x_1=0\}.
\end{split}
\end{equation*}
This is smooth.

\smallskip
\item[$x_2x_1wx_1x_1$-chart.]
\begin{equation*}
\begin{split}
\De\left(z,\,x_2^{1/4},\,x_2^{2/4},\,w\hspace{-.4em}\right.&\left.x_1x_2^{3/4}x_3\right),\\
&D_1 = \{x_2=0\},\ D_4 = \{w=0\},\ D_7 = \{x_1=0\}.
\end{split}
\end{equation*}
Smooth again.

\smallskip
\item[$x_3wx_2x_2ww$-chart.] 
\begin{equation*}
\begin{split}
\De\left(z,\,x_3^{1/4}x_1,\,x_3^{2/4},\,\hspace{-.4em}\right.&\left.wx_3^{3/4}\right),\\
&D_1 = \{x_3=0\},\ D_5 = \{x_2=0\},\ D_7 = \{w=0\}.
\end{split}
\end{equation*}
A neighbour (2) of cp4.

\smallskip
\item[$x_3wx_1x_1ww$-chart.] 
\begin{equation*}
\begin{split}
\De\left(z,\,x_3^{1/4},\,x_3^{2/4}x_2,\,\hspace{-.4em}\right.&\left.wx_3^{3/4}\right),\\
&D_1 = \{x_3=0\},\ D_5 = \{x_1=0\},\ D_7 = \{w=0\}.
\end{split}
\end{equation*}
Smooth again.
\end{description}

\smallskip
\item[Blow-up 8.] Centre $= D_{\text{old}} \cap D_1 \cap \{z=0\}$, appearing in the $x_1$-,
$x_3x_1x_1$- and $x_2x_1x_1$-charts. Over these three charts, we need to consider
the following.

\smallskip
\setlist[description]{leftmargin=\parindent}
\begin{description}
\item[$x_1z$-chart.] 
\begin{equation*}
\begin{split}
\De\left(z^{2/4},\,w^{1/4}x_1^{1/4},\, z^{2/4}w^{2/4}x_1^{2/4}x_2,\, \hspace{-.4em}\right.&\left.zw^{3/4}x_1^{3/4}x_3\right),\\
&D_1 = \{x_1=0\},\ D_8 = \{z=0\}.
\end{split}
\end{equation*}

\item[$x_1w$-chart.] 
$$
\De\left(w^{2/4}z,\,x_1^{1/4},\,w^{2/4}x_1^{2/4}x_2,\,wx_1^{3/4}x_3\right),\ D_1 = \{x_1=0\},\ D_8 = \{w=0\}.
$$
This is smooth.

\smallskip
\item[$x_3x_1x_1z$-chart.]
\begin{equation*}
\begin{split}
\De\left(z^{2/4},\, w^{1/4}x_3^{1/4},\, z^{2/4}\hspace{-.4em}\right.&\left.w^{2/4}x_3^{2/4}x_2,\, zw^{3/4}x_1x_3^{3/4}\right),\\
&D_1 = \{x_3=0\},\ D_4 = \{x_1=0\},\ D_8 = \{z=0\}.
\end{split}
\end{equation*}

\item[$x_3x_1x_1w$-chart.]
\begin{equation*}
\begin{split}
\De\left(w^{2/4}z,\,x_3^{1/4},\,w^{2/4}x_3^{2/4}\hspace{-.4em}\right.&\left.x_2,\,wx_1x_3^{3/4}\right),\\
&D_1 = \{x_3=0\},\ D_4 = \{x_1=0\},\ D_8 = \{w=0\}.
\end{split}
\end{equation*}
Smooth again.

\smallskip
\item[$x_2x_1x_1z$-chart.]
\begin{equation*}
\begin{split}
\De\left(z^{2/4},\, w^{1/4}x_2^{1/4},\, z^{2/4}\hspace{-.4em}\right.&\left.w^{2/4}x_2^{2/4},\, zw^{3/4}x_1x_2^{3/4}x_3\right),\\
&D_1 = \{x_2=0\},\ D_4 = \{x_1=0\},\ D_8 = \{z=0\}.
\end{split}
\end{equation*}

\item[$x_2x_1x_1w$-chart.]
\begin{equation*}
\begin{split}
\De\left(w^{2/4}z,\,x_2^{1/4},\,w^{2/4}x_2^{2/4},\,\hspace{-.4em}\right.&\left.wx_1x_2^{3/4}x_3\right),\\
&D_1 = \{x_2=0\},\ D_4 = \{x_1=0\},\ D_8 = \{w=0\}.
\end{split}
\end{equation*}
Smooth again.
\end{description}

\smallskip
\item[Blow-up 9.] Centre $= D_{\text{old}} \cap D_1 \cap D_8$, appearing in the $x_1z$-,
$x_3x_1x_1z$- and $x_2x_1x_1z$-charts. Over these three charts, we need only consider
the following, and they are all smooth.

\smallskip
\setlist[description]{leftmargin=\parindent}
\begin{description}
\item[$x_1zw$-chart.] 
\begin{equation*}
\begin{split}
\De\left(z^{2/4},\,x_1^{1/4},\, z^{2/4}wx_1^{2/4}\hspace{-.4em}\right.&\left.x_2,\, zw^2x_1^{3/4}x_3\right),\\
&D_1 = \{x_1=0\},\ D_8 = \{z=0\},\ D_9 = \{w=0\}.
\end{split}
\end{equation*}

\item[$x_3x_1x_1zw$-chart.]
\begin{equation*}
\begin{split}
\De\left(z^{2/4},\, \hspace{-.4em}\right.&\left.x_3^{1/4},\, z^{2/4}wx_3^{2/4}x_2,\, zw^2x_1x_3^{3/4}\right),\\
&D_1 = \{x_3=0\},\ D_4 = \{x_1=0\},\ D_8 = \{z=0\},\ D_9 = \{w=0\}.
\end{split}
\end{equation*}

\item[$x_2x_1x_1zw$-chart.]
\begin{equation*}
\begin{split}
\De\left(z^{2/4},\, \hspace{-.4em}\right.&\left.x_2^{1/4},\, z^{2/4}wx_2^{2/4},\, zw^2x_1x_2^{3/4}x_3\right),\\
&D_1 = \{x_2=0\},\ D_4 = \{x_1=0\},\ D_8 = \{z=0\},\ D_9 = \{w=0\}.
\end{split}
\end{equation*}
\end{description}

\end{description}

\smallskip
There is of course some flexibility in the choice of the preceding blowings-up; for example, 
blow-ups 8,\,9 could have been performed before blow-ups 6,\,7, and we may switch the order of 3 and 4,
or of 4 and 5.

\subsubsection{Summary of the $\cp(4)$ case}\label{subsec:cp4summary} 
After the preceding sequence of blowings-up, only singularities $\{\De_4=0\}$ of the
following kind appear in the exceptional divisor $D_1$. (Here we have re-labelled coordinates to be
consistent with the normal forms (1)-(3) above.)
\begin{enumerate}
\item\ $\De_4\left(z,\,w^{1/4}x_1,\,w^{2/4}x_2,\,w^{3/4}\right)$,
\smallskip
\item\ $\De_4\left(z,\,w^{1/4}x_1,\,w^{2/4},\,w^{3/4}x_3\right)$,
\smallskip
\item[($2'$)]\ $\De_4\left(z,\,w^{1/4}x_1,\,w^{2/4},\,w^{3/4}x_2x_3\right)$,
\smallskip
\item[(3)]\ $\De_4\left(z,\,w^{1/4}x_1,\,w^{2/4},\,w^{3/4}\right)$.
\end{enumerate}
\smallskip
These singularities are the \emph{neighbours} of $\cp(4)$.
In (2), $x_3$ may or may not represent an exceptional divisor, and in ($2'$), $x_2$ represents an exceptional
divisor. In (1) or (2), moreover, there may be an additional exceptional divisor $x_3$ or $x_2$ (respectively).

Note that the nearby singularities outside $D_1$ are only normal crossings singularities because, in the order 2
cases (2), ($2'$), (3) (respectively, in the order 3 case (1)), the gradients of any two factors (respectively, of any
three factors) of $\De=\De_4$ are linearly independent at such a nearby point. Moreover, $X$ and the exceptional divisor $E$
are simultaneously normal crossings at nearby points outside $D_1$. 

We summarize these results in the following lemma (where, as usual, we use the same notation $(X,E)$, etc.,
for the transforms of our objects after a sequence of blowings-up).

\begin{lemma}\label{lem:cp4summary} 
After first blowing up a $\cp(4)$ point to introduce a new exceptional divisor $D_1$, there is a sequence
of seven admissible blowings-up with centres in $D_1$, after which
\begin{enumerate}
\item $X$ has only minimal non-$\nc$ singularities as above (besides the $\cp(4)$ point), and therefore smooth normalization, at
points of $D_1$;
\item there is a neighbourhood $U$ of $D_1$ such that $(X,E)$ has only $\nc$ singularities in $U\backslash D_1$, which are of order $< (4,0)$ outside $S_{4,0}$.
\end{enumerate}
\end{lemma}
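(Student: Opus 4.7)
The plan is to verify that the explicit sequence of blowings-up (Blow-ups 1 through 8) laid out above accomplishes both claims (1) and (2), by assembling the chart-by-chart data already computed.

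I would first check admissibility of each blow-up. The centre of Blow-up 1 is the single point $\cp(4)$, which is trivially admissible. For Blow-ups 2 through 8, the centre is in each case either an isolated point of order $4$ of the strict transform of $X$, or a smooth stratum cut out by $\{z=0\}$ together with one or two components of the current exceptional divisor. The inductive enumeration of local forms in each chart shows that $\ord_x X$ is locally constant on the centre (equal to $4$, $3$ or $2$), the centre is smooth, and it is snc with the current exceptional divisor. Since the centres are described invariantly as intersections of components of the exceptional divisor with the order $\geq k$ locus of $X$, they extend to globally defined closed subvarieties.

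For (1), the verification is exhaustive: each chart produced after Blow-up 8 yields either a smooth strict transform of $X$, or one of the four minimal normal forms (1), (2), ($2'$), (3) after absorbing units and relabelling. Smooth normalization of each of these four forms follows from the substitution $w = v^4$, as in the proof of Proposition \ref{prop:circ}: $\De_4$ factors into four linear factors over $\IC\llb v,x\rrb$, and each factor has nonzero partial derivative with respect to $z$, hence defines a smooth hypersurface; the map to the variety cut out by the product is finite and birational. In case ($2'$) one additionally absorbs the unit $x_2 x_3$ after the substitution.

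For (2), at any point $a \in U\backslash D_1$ in a sufficiently small neighbourhood $U$ of $D_1$, the defining coordinate of $D_1$ in a chart containing $a$ is a unit at $a$. Taking an \'etale local fourth root reduces the equation of $X$ near $a$ to
\begin{equation*}
\prod_{\ell=0}^{3}\left(z + \ep^\ell v x_1 + \ep^{2\ell} v^2 x_2 + \ep^{3\ell} v^3 x_3\right),
\end{equation*}
whose four linear leading parts are linearly independent. Hence at most four smooth branches of $X$ pass through $a$, and any subset of them meets transversally; combined with the fact that every other exceptional component appearing in each chart is a coordinate hypersurface transverse to these branches, $(X,E)$ is $\nc$ at $a$. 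The order reaches $(4,0)$ only along the closure of $S_{4,0}$; elsewhere in $U\backslash D_1$ the order is strictly less.

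The main obstacle is the combinatorial bookkeeping: verifying that no chart was overlooked when listing the remaining singularities after Blow-up 8, and that the centres of Blow-ups 2 through 8 patch together into genuinely global smooth subvarieties. This is addressed by choosing each centre as the intersection of specified components of the exceptional divisor with the maximal order locus of $X$ (or as an isolated point of order $4$), both of which are globally defined closed subvarieties, so the local descriptions in overlapping charts are automatically compatible.
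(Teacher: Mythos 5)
Your proposal is correct and follows essentially the same route as the paper: the lemma is proved there precisely by the explicit chart-by-chart verification of Blow-ups 1--8, with part (1) read off from the final normal forms (and smooth normalization via the substitution $w=v^4$ as in Proposition \ref{prop:circ}), and part (2) justified by the linear independence of the gradients of the relevant factors of $\De_4$ at nearby points outside $D_1$. The only caveat is that your reduction of every chart to the single displayed product form after absorbing the $D_1$-coordinate is slightly too glib — other exceptional coordinates still appear under fractional powers in several charts — but the transversality argument you invoke is exactly the paper's and goes through chart by chart.
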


\smallskip
\begin{remark}\label{rem:cp3,2} \emph{Circulant singularities of lower order.} The cases
\begin{align*} 
\cp(3)  &\qquad \De_3\left(z,\,w^{1/3}x_1,\,w^{2/3}x_2\right),\\
\cp(2)  &\qquad \De_2\left(z,\,w^{1/2}x\right) \quad \text{(pinch point)}
\end{align*}
are much
simpler versions of the $\cp(4)$ case above (see \cite{BLMmin2}). In particular, $\cp(3)$ has only one singular neighbour
\begin{equation}\label{eq:dpp}
\De_3\left(z,\,w^{1/3}x_1,\,w^{2/3}\right)
\end{equation}
in the exceptional divisor $D_1 = \{w=0\}$ (this singularity was called a \emph{degenerate pinch point}
in \cite{BLMmin2}), and $\cp(2)$ has only a smooth neighbour in $D_1$. After the first blowing-up to
introduce $D_1$, only three additional blowings-up are needed for $\cp(3)$, and only one for $\cp(2)$.

Moreover, following Theorem \ref{thm:lim3intro} and Proposition \ref{prop:lim3} in the case of an irreducible limit
of $\nc(3)$, in any dimension (and the simpler version for limits of $\nc(2)$; cf.\,\cite{BMmin1}), we get the preceding normal forms
of $\cp(3)$ and $\cp(2)$ (independent of the remaining variables), and we obtain the neighbours
above, by global blowings-up. See also Remark \ref{rem:etcsummary} below.
\end{remark}

In five variables, apart from $\cp(4)$, singularities of the following three kinds may occur at an isolated point
of the stratum $S_{4,0}$.

\subsubsection{$\mathrm{Smooth} \times \cp(3)$}\label{subsec:smoothcp3} 
Let us now write $\De = \De_3$.
There are \'etale coordinates $(w, x_1, x_2, y, z)$
in which $X$ is the vanishing locus of
$$
y\De\left(z,\,w^{1/3}x_1,\,w^{2/3}x_2\right).
$$

\setlist[description]{leftmargin=\parindent}
\smallskip
\begin{description}
\item[Blow-up 1.] \emph{Introduction of $D_1$.}
Centre $= 0$ in the coordinate chart above. The blowing-up is covered
by 5 coordinate charts, with the following transforms of the ideal of $X$ and exceptional divisor $D_1$.

\smallskip
\begin{description}
\item[$z$-chart.]\ $y$\ (smooth),\ $D_1 = \{z=0\}$.

\smallskip
\item[$w$-chart.]\  $y\De\left(z,\,w^{1/3}x_1,\,w^{2/3}x_2\right),\ D_1 = \{w=0\}$.

\smallskip
\item[$x_1$-chart.]\  $y\De\left(z,\,w^{1/3}x_1^{1/3},\,w^{2/3}x_1^{2/3}x_2\right),\ D_1 = \{x_1=0\}$.

\smallskip
\item[$x_2$-chart.]\  $y\De\left(z,\,w^{1/3}x_2^{1/3}x_1,\,w^{2/3}x_2^{2/3}\right),\ D_1 = \{x_2=0\}$.

\smallskip
\item[$y$-chart.]\  $\De\left(z,\,w^{1/3}y^{1/3}x_1,\,w^{2/3}y^{2/3}x_2\right),\ D_1 = \{y=0\}$.
\end{description}
\end{description}

\medskip
We now make three further blowings-up, which are essentially the three blowings-up needed for $\cp(3)$
after the introduction of $D_1$ (see Remark \ref{rem:cp3,2}). Over the $w$-, $x_1$- and $x_2$-charts, in fact,
these are simply the blowings-up for $\cp(3)$ in the presence of the additional variable $y$; after each blowing-up,
we get $y\, \times$ the transform of the blowing-up for $\cp(3)$. So we leave the computation
to the reader, and describe only the transforms over the $y$-chart above (where the centre of blowing up extends,
in any case, to the centre needed over the $w$-, $x_1$- and $x_2$-charts).

\setlist[description]{leftmargin=\parindent}
\smallskip
\begin{description}
\item[Blow-up 2.] Centre $=$ points of order $3$, $D_{\mathrm{old}} \cap D_1 \cap \{z = x_1 = 0\}$ in the
$y$-chart (and the $x_2$-chart). Over the $y$-chart, we need
consider only the following.

\smallskip
\begin{description}
\item[$yx_1$-chart.]
$$
\De\left(z,\,w^{1/3}y^{1/3}x_1^{2/3},\,w^{2/3}y^{2/3}x_1^{1/3}x_2\right),\ D_1 = \{y=0\},\, D_2=\{x_1=0\}.
$$

\item[$yw$-chart.]
$$
\De\left(z,\,w^{2/3}y^{1/3}x_1,\,w^{1/3}y^{2/3}x_2\right),\ D_1 = \{y=0\}\, D_2=\{w=0\}.
$$
\end{description}

\smallskip
\item[Blow-up 3.] Centre $=$ points of order $3$, $D_1 \cap D_2 \cap \{z=0\}$. We need consider only the following.

\smallskip
\begin{description}
\item[$yx_1x_1$-chart.]
$$
\De\left(z,\,w^{1/3}y^{1/3},\,w^{2/3}y^{2/3}x_2\right),\ D_1 = \{y=0\},\, D_3=\{x_1=0\}.
$$

\item[$yww$-chart.]
$$
\De\left(z,\,y^{1/3}x_1,\,y^{2/3}x_2\right),\ D_1 = \{y=0\},\, D_3=\{w=0\}.
$$
This is $\cp(3)$. 
\end{description}

\smallskip
\item[Blow-up 4.] Centre $=$ order $2$ points,
$D_{\text{old}} \cap D_1 \cap \{z=0\}$. We have to consider only
the $yx_1x_1w$-chart, where $X$ becomes smooth.
\end{description}

\subsubsection{Summary of the $\smooth \times \cp(3)$ case}\label{subsec:smoothcp3summary} 
We get the following as non-$\nc$ singular neighbours of $\smooth \times \cp(3)$, in $D_1$.
\begin{enumerate}
\item\ $\cp(3)$,
\item\ $y\De_3\left(z,\,w^{1/3}x_1,\,w^{2/3}\right)$,
\item\ $\De_3\left(z,\,w^{1/3}x_1,\,w^{2/3}\right)$.
\end{enumerate}
\smallskip
These occur already in a small neighbourhood of $\smooth \times \cp(3)$. Moreover, after the
four blowings-up above, there is a neighbourhood $U$ of $D_1$ such that $(X,E)$ has only normal
crossings singularities of order $\leq (3,1)$ in $U\backslash \left(D_1 \cup S_{4,0}\right)$. 
It follows that the desingularization invariant, where we reset the current year to year zero,
is $\leq \inv_{3,1}$ in $U\backslash \left(D_1 \cup S_{4,0}\right)$.
We will formulate
a summary lemma analogous to Lemma \ref{lem:cp4summary} covering all three cases
$\smooth \times \cp(3)$, $\cp(2) \times \cp(2)$ and $\smooth \times \smooth \times \cp(2)$; see
Lemma \ref{lem:etcsummary}.

\begin{remark}\label{rem:AdaptationExpCp3}
We treat the case $\exp \times \cp(3)$, which appears when dealing with the stratum $S_{3,1}$, using the same blow-up
sequence as for $\smooth \times \cp(3)$, where $\exp =\{y=0\}$ (see Remark \ref{rem:prod}). 
After the four blowings-up above, there is a neighbourhood $U$ of $D_1$ such that $(X,E)$ has only normal crossings singularities of order $\leq (3,1)$ in $U\backslash \left(D_1 \cup S_{3,1}\right) = U\backslash \Sigma_{3,1}$.

In order to continue to the stratum $S_{3,0}$, as in \S\ref{subsec:overviewproof}, we have to consider the desingularization morphism $\s$ over the complement
of $\Sigma_{3,1}$, where we first reset to year zero, and stop when $\inv \leq \inv_{3,0}$. Note that the centres of blowing-up involved
in $\s$ lie over only the $yww$-chart of \S\ref{subsec:smoothcp3}; in fact, $\s$ consists of a single blowing-up with centre given
in the $yww$-chart by the smooth curve $C = \{z=x_1=x_2= w=0\} \subset D_3$; 
this curve does not intersect the strict transform of $S_{3,1}$, which lies in the $w$-chart,
but it does intersect $D_1$. After blowing up with centre $C$, we already have $\inv \leq \inv_{3,0}$, and the class
of minimal singularities in $\Sigma_{3,1}$ is preserved.
\end{remark}

\subsubsection{$\cp(2) \times \cp(2)$}\label{subsec:cp2cp2} $X$ is given by
\begin{equation}\label{eq:cp2cp2}
(z_1^2 - wx_1^2)(z_2^2-wx_2^2) = 0.
\end{equation}
The non-$\nc$ singularities in a small neighbourhood of the origin are the following.
\begin{enumerate}
\item\ $(z_1^2 - w)(z_2^2-wx_2^2)$,
\item\ $(z_1^2 - w)(z_2^2-w)$,
\item\ $\cp(2)$.
\end{enumerate}

\smallskip
We again blow up the origin to introduce $D_1$. Then we get the same equation \eqref{eq:cp2cp2} in the $w$-chart.
We get the following in the $x_1$-chart:
$$
(z_1^2 - wx_1)(z_2^2-wx_1x_2^2) = 0,\ D_{\text{old}} = \{w=0\},\, D_1 = \{x_1=0\},
$$
and a symmetric description in the $x_2$ chart. Also, in the $z_1$-chart, we get
$$
z_2^2 - wz_1x_2^2 = 0,\ D_{\text{old}} = \{w=0\},\, D_1 = \{z_1=0\},
$$
and we get a symmetric description in the $z_2$-chart. 
There is a further sequence of blowings-up with centres in $D_1$, after which
we have only the preceding non-$\nc$ singularities in $D_1$, and only $\nc$ singularities of order
$\leq (3,1)$ in $U\backslash  \left(D_1 \cup S_{4,0}\right)$, where $U$ is a neighbourhood of $D_1$.
We leave the full blowing-up computation to the reader.

\subsubsection{$\mathrm{Smooth} \times \smooth \times \cp(2)$}\label{subsec:smsmcp2} The non-$\nc$ singular
neighbours are $\smooth \times \cp(2)$ and $\cp(2)$, and we get a statement similar to that in \S\ref{subsec:cp2cp2}
(cf. \S\ref{subsec:smoothcp3} above, as well as \cite{BLMmin2}).

\subsubsection{Summary lemma for the stratum $S_{4,0}$}\label{subsec:summary}

\begin{lemma}\label{lem:etcsummary}
In each case $\smooth\times\cp(3)$, $\cp(2)\times\cp(2)$ or $\smooth\times\smooth\times\cp(2)$, after first blowing up
the point to introduce an exceptional divisor $D_1$, there is a sequence of admissible blowings-up with centres in $D_1$,
after which
\begin{enumerate}
\item $X$ has only minimal non-$\nc$ singularities as listed respectively in \S\ref{subsec:smoothcp3summary},
\ref{subsec:cp2cp2} or \ref{subsec:smsmcp2}, and therefore smooth normalization, at points of $D_1$;
\item there is a neighbourhood $U$ of $D_1$ such that $(X,E)$ has only $\nc$ singularities in $U\backslash D_1$,
which are of order $< (4,0)$ outside $S_{4,0}$.
\end{enumerate}
\end{lemma}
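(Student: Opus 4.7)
The plan is to treat each of the three cases by an explicit local blow-up calculation in the spirit of \S\ref{subsec:cp4} and \S\ref{subsec:smoothcp3}, using the fact that the initial blowing-up introducing $D_1$ is invariantly defined (its centre is the isolated point in $S_{4,0}$ at which we start). Since the full calculation for $\smooth\times\cp(3)$ is already written out in \S\ref{subsec:smoothcp3}--\S\ref{subsec:smoothcp3summary}, the substance of the proof is to carry out the analogous, but simpler, computations for $\cp(2)\times\cp(2)$ and $\smooth\times\smooth\times\cp(2)$, and then to verify assertion (2) in all three cases.

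First I would fix coordinates as in \S\ref{subsec:cp2cp2} and \S\ref{subsec:smsmcp2} and examine the five (respectively six) charts of the first blowing-up with centre $\{0\}$. In each chart the strict transform of $X$ is a product of factors each of which is either smooth, a pinch point, a $\cp(2)\times\cp(2)$ in lower-dimensional ``year-zero'' coordinates, or a degenerate neighbour. By Remark \ref{rem:cp3,2}, the resolution of a $\cp(2)$ after $D_1$ is introduced requires only a single additional admissible blowing-up (of the remaining order-$2$ curve), and the resolution of a $\smooth\times\cp(2)$ is obtained by this same blowing-up in the presence of the extra smooth factor. I would then enumerate the charts and check that the centre of each subsequent blowing-up, described locally, extends to a smooth subvariety of $D_1$ that is invariantly characterised (the intersection of $D_1$ with the order-$2$ locus of $X$, or with another exceptional component), exactly as in the trick of Remark \ref{rem:D_1trick}. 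For $\cp(2)\times\cp(2)$ the chart table already indicated in \S\ref{subsec:cp2cp2} gives, after at most two further admissible blowings-up, the stated list of non-nc neighbours $\{(z_1^2-w)(z_2^2-wx_2^2)=0\}$, $\{(z_1^2-w)(z_2^2-w)=0\}$ and $\cp(2)$, plus smooth points and nc points. The $\smooth\times\smooth\times\cp(2)$ case is a strictly easier variant.

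For statement (1), I would then read off from each chart that every singular point lying in (the strict transform of) $D_1$ is either one of the original three product circulant singularities, or one of the listed neighbours, all of which are minimal and hence (by Proposition \ref{prop:circ} together with the product-of-circulants observation of Remark \ref{rem:cp3,2}) have smooth normalization. For statement (2), I would argue that at any point of a small neighbourhood $U$ of $D_1$ that is not itself in $D_1$, the factors of the defining equation are transverse: in every chart the non-trivial factor appearing in the formula is the pull-back of a product of circulant polynomials $\De_{k_i}$, whose gradients at a point outside the union of their zero loci and outside $D_1$ are linearly independent for the same reason as in \S\ref{subsec:cp4summary}. This gives normal crossings in $U\setminus D_1$, and the order drops below $(4,0)$ away from $S_{4,0}$ because the only way to maintain order $4$ is to remain on the axis of circulant degeneracy, which is precisely $S_{4,0}\cup D_1$.

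The main obstacle, as in the $\cp(4)$ case, is purely bookkeeping: organising the (at most a handful of) chart computations so that the successively chosen local centres glue to globally admissible smooth centres in $D_1$, and so that no singular point escapes the enumeration. The individual computations are elementary monomial substitutions, but one must check in each chart both that the new exceptional components are snc with the chosen centre and that the Hilbert--Samuel function (equivalently, the order of $X$, since $X$ is a hypersurface) is locally constant on that centre, so that the blowing-up is admissible in the sense of Definition \ref{def:admiss}. Once this is verified chart by chart, the two conclusions of the lemma follow directly from the explicit normal forms appearing in the final charts.
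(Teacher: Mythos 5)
Your proposal is correct and follows essentially the same route as the paper: the lemma is proved there precisely by the explicit chart-by-chart blow-up computations of \S\ref{subsec:smoothcp3}--\S\ref{subsec:smsmcp2} (the $\smooth\times\cp(3)$ case written in full, the $\cp(2)\times\cp(2)$ computation begun and left to the reader, the last case by analogy), with globalization of the local centres via the mechanism of Remark \ref{rem:D_1trick} and the transversality-of-gradients argument of \S\ref{subsec:cp4summary} for assertion (2). Your plan reproduces exactly this structure, including the appeal to Remark \ref{rem:cp3,2} for the single extra blowing-up needed per $\cp(2)$ factor.
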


\begin{remark}\label{rem:etcsummary}
In the simpler cases $\cp(3)$ and $\smooth\times\cp(2)$ analogous to $\cp(4)$ and $\smooth\times\cp(3)$,
respectively, the analogues of Lemmas \ref{lem:cp4summary} and \ref{lem:etcsummary} hold in any
dimension; see Remark \ref{rem:cp3,2}.
\end{remark}

\subsection{Minimal singularities of order at most $3$.}\label{subsec:MinimalHigherDim} 
This section provides details of the blow-up procedure (B4) in the remaining cases, not already covered
in \S\ref{subsec:clean1}; i.e., for the strata $S_{2,r}$ and $S_{1,r}$. We need to treat the strata $S_{3,0},\,S_{2,r}$ and $S_{1,r}$ in any
number of variables, in order that the results complete the proof not only of Theorem \ref{thm:mainB}, but also of
the following more precise version of Theorem \ref{thm:nc(3)}, for a pair $(X,E)$.

\begin{theorem}\label{thm:main3}
Given $(X,E)$ in arbitrary dimension, there is a finite sequence of admissible blowings-up \eqref{eq:blup}
such that every $\s_j$ is an isomorphism over the locus of normal crossings points of $(X_0,E_0) = (X,E)$ of
order at most $(3,0)$, and $(X_t, E_t)$
has only minimal singularities.
\end{theorem}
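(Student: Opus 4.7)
The plan is to establish the inductive Claim \ref{claim:ind} for $(p,r) = (3,0)$ in arbitrary dimension, which yields Theorem \ref{thm:main3} directly, following the iterative strategy of \S\ref{subsec:overviewproof}. First I would reduce to the embedded hypersurface case using the standard desingularization algorithm: since the Hilbert--Samuel function determines the local minimal embedding dimension, points of embedding codimension greater than one are eliminated without modifying nc points.

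I would then perform a preliminary phase, applying the standard desingularization algorithm globally and stopping once $\inv \leq \inv_{3,0}$ everywhere. All centres in this phase lie in $\{\inv > \inv_{3,0}\}$, hence are disjoint from the locus of nc points of $(X_0,E_0)$ of order at most $(3,0)$ (whose year-zero invariant is at most $\inv_{3,0}$); this phase therefore delivers conditions (1) and (2) of Claim \ref{claim:ind}, since over any original nc open set the sequence coincides with desingularization stopped at $\inv_{3,0}$. After this phase the maximum stratum is $S_{3,0}$, and what remains is to arrange that only minimal singularities appear.

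I would next iterate through the strata $S_{3,0}, S_{2,n-2}, S_{2,n-3}, \ldots, S_{2,0}, S_{1,n-1}, \ldots, S_{1,0}$ in decreasing lex order, applying at each the procedure (B1)--(B4) from \S\ref{subsec:overviewproof}: (B1) use $\inv$-admissible blowings-up (via Lemma \ref{lemma:genericnc}) to place the non-nc locus of $S_{p,r}$ into a subdivisor $E' \subset E$ transverse to $S_{p,r}$; (B2) apply splitting---Proposition \ref{prop:limnc3} for $p=3$, the simpler $k=2$ version from \cite{BMmin1} for $p=2$, trivially for $p=1$; (B3) apply cleaning via Proposition \ref{prop:lim3} and Theorem \ref{thm:lim3intro}, or the $k=2$ analogue, to reduce to a circulant normal form (a product of circulant singularities $\cp(k_i)$ multiplied by an $\exc^r$ factor, per Remarks \ref{rem:prod} and \ref{rem:lim3E}); and (B4) execute the explicit blow-up procedure of \S\ref{subsec:smoothcp3} and its analogues, adapted as in Remark \ref{rem:AdaptationExpCp3}, to produce only minimal neighbours in the new exceptional divisor $D_1$. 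Because each centre in (B1)--(B4) lies in the non-nc locus of the current $S_{p,r}$, the iteration preserves nc of order at most $(3,0)$ throughout.

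The principal obstacle is the transition between strata when moving from $S_{p,r}$ with $r \geq 1$ to $S_{p,r-1}$. After processing $S_{p,r}$ and producing $\Sigma_{p,r}$, upon resetting to year zero and restarting desingularization in $Z \setminus \Sigma_{p,r}$, new blow-up centres may accumulate at points of $\Sigma_{p,r}$ and fail to be closed in $X$. I would verify, following Remark \ref{rem:AdaptationExpCp3}, that such centres extend across $\Sigma_{p,r}$ to admissible smooth subspaces preserving the previously established minimal singularities. This compatibility check uses the explicit local structure of each minimal normal form (such as $\exc^r \times \cp(3)$, products of $\cp(2)$'s, and $\smooth \times \cp(2)$) together with the geometry of the curves of non-nc points in the next stratum that accumulate onto them; working this out case-by-case constitutes the main technical content of \S\ref{subsec:MinimalHigherDim}.
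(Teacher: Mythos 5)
Your proposal follows the paper's own proof essentially step for step: reduction to an embedded hypersurface, desingularization until $\inv\leq\inv_{3,0}$, iteration over the strata $S_{3,0},S_{2,r},S_{1,r}$ in decreasing lexicographic order via (A) and (B1)--(B4) with Propositions \ref{prop:limnc3} and \ref{prop:lim3} supplying the splitting and circulant normal form in arbitrary dimension, and the extension across $\Sigma_{p,r}$ of accumulating centres exactly as carried out in \S\ref{subsec:MinimalHigherDim}. The only (harmless) imprecision is in your list of normal forms for the compatibility check: for $(p,r)=(3,0)$ only $\cp(3)$, $\smooth\times\cp(2)$, $\exc^r\times\cp(2)$ and their neighbours arise, not $\exc^r\times\cp(3)$ or products of two $\cp(2)$'s, which belong to the strata above $(3,0)$ and are needed only for Theorem \ref{thm:mainB}.
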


For the stratum $S_{3,0}$, (B4) has been covered in Remarks \ref{rem:cp3,2}, \ref{rem:etcsummary}.
Singularities $\exp\times\cp(2)$ in the stratum $S_{2,1}$ are analogous to $\smooth\times\cp(2)$ in $S_{3,0}$
(cf. Remark \ref{rem:AdaptationExpCp3}). We will carry out the details of $\exp^r\times\cp(2)$, $0<r\leq n-2$, 
for general $n$.

Consider an $\exp^r\times\cp(2)$ point $a$ in the stratum $S_{2,r}$. In this case, the exceptional divisor at $a$
can be separated into two parts: $\Eold$ corresponding to $\exp^r$, and $\Enew$ given by the components of $E\backslash \Eold$
at $a$ (introduced in the previous modifications of $S_{2,r}$).

There are \'{e}tale coordinates
\[
(w,v,y,u,x,z)= (w,v_1,\ldots,v_s,y_1,\ldots,y_r,u_1,\ldots,u_t,x,z)
\]
at $a=0$, in which
\[
X = \{z^2 - wx^2 =0\}, \quad \Eold = \{y_1\cdots y_r=0\}, \quad \Enew = \{wv_1\cdots v_s=0\}
\]
In these coordinates, $S_{2,r} = \{z=x=y=0\}$. Let us write $D_w := \{w=0\}$.

\setlist[description]{leftmargin=\parindent}
\smallskip
\begin{description}
\item[Blow-up 1.] Centre $S_{2,r} \cap D_w$, to introduce $D_1$. The blowing-up is covered by $r+3$ charts, including $r$
symmetric $y_j$-charts, so we can consider only the following.

\smallskip
\begin{description}
\item[$z$-chart.] $X \cap D_1 = \emptyset$.

\smallskip
\item[$x$-chart.] $X=\{z^2 - wx=0\},\quad \ D_1 = \{x=0\}$.

\smallskip
\item[$w$-chart.]
$$
z^2 - wx^2, \quad \Eold = \{y_1\cdots y_r=0\}, \quad \Enew = \{wv_1\cdots v_s=0\},
$$
where $D_1 =\{w=0\}$. Here, $\{z=x=y=0\}$ is (the strict transform of) $S_{2,r}$.

\smallskip
\item[$y_1$-chart.]
$$
z^2 - y_1wx^2,\quad \Eold = \{y_2\cdots y_r=0\}, \quad \Enew = \{wv_1\cdots v_sy_1=0\},
$$
where $D_1 =\{y_1=0\}$.
\end{description}

\smallskip

\item[Blow-up 2.] Centre $= \{\text{order 2 points of $X$}\} \cap D_1 \cap D_w$.

\smallskip
\begin{description}
\item[$xw$-chart.] $X$ is smooth.

\smallskip
\item[$y_1w$-chart.]
$$
z^2 - y_1x^2,\quad \Eold = \{y_2\cdots y_r=0\}, \quad \Enew = \{wv_1\cdots v_sy_1=0\},
$$
where $D_1 =\{y_1=0\}$. Note that $(X,E)$ has only $nc$-singularities outside $D_1$.
\end{description}
\end{description}

\smallskip
We now reset to year zero, and use the desingularization algorithm to blow up outside $\Sigma_{2,r}$ (i.e., outside $S_{2,r} \cup D_1$
in the coordinate charts above), stopping when the maximum value of $\inv$ is $\inv_{2,r-1}$. There is a neighbourhood
$U_{2,r}$ of $\Sigma_{2,r}$ such that $(X,E)$ is nc in $U_{2,r}\backslash D_{2,r}$ (see \S\ref{subsec:overviewproof}),
so the resolution process is essentially combinatorial over $U_{2,r}\backslash \Sigma_{2,r}$. 

Each centre of blowing up may have limits at $\cp(2)$ points of $X$ in $D_{2,r}$, and the centre of blowing up may have several disjoint
components with the same limit point. In suitable local coordinates at such a point, $X$ is given by $\{z^2 -yx^2 = 0\}$, where $\{y=0\}$
is a component of $D_{2,r}\backslash D_{3,0}$, and each component of the centre of blowing up is given by the intersection of
$\{z=x=0\}$ and at least $r$ components of $E$; therefore, at least one of these components belongs to $\Enew$, so the blowings-up
do not modify the nc locus of $(X_0,E_0)$. Clearly, each of the components extends to a closed smooth subspace of $X$, as required,
and we can blow up one at a time. Since none of the components of $E$ defining the centre of blowing-up is $\{y=0\}$, the
limiting $\cp(2)$ singularity is preserved.

For example, in the $y_1w$-chart above, the first centre of blowing up is $\{z=x=y_2=\cdots y_r=v=w=0\}$.

The blow-up sequence leads to $\inv\leq \inv_{2,r-1}$ in the complement of $\Sigma_{2,r}$, to complete the step.

Methods similar to the preceding are used in \cite[Proofs of Theorems 3.4, 1.18]{BMmin1}.

Finally, we can handle the stratum $S_{2,0}$ as in Remark \ref{rem:cp3,2}, and then deal with the strata
$S_{1,r}$ in a similar way to $S_{2,r}$, to complete the proofs of Theorems
\ref{thm:mainB} and \ref{thm:main3}. The $S_{1,r}$ case has much in common with problem of
partial desingularization preserving $\snc$, discussed in Section \ref{sec:intro}.

\subsection{Concluding remark}\label{subsec:concl}
As remarked in \S\ref{subsec:overviewproof}, Conjecture \ref{conj:min} follows from Claim \ref{claim:ind}. We propose to prove the claim by induction on $(p,r) \in I_n$, 
in the following way (see also \S\ref{subsec:approach}). Given $(p,r)$, we first apply (A) the standard 
desingularization algorithm to blow up until $\inv\leq\inv_{p,r}$. Secondly, we modify the stratum $S_{p,r}$ using
the four blowing up procedures (B1)--(B4), generalized to arbitrary dimension. 
And finally, we apply the inductive assumption for the predecessor of $(p,r)$ in $I_n$,
to the complement of $\Sigma_{p,r} := S_{p,r} \cup D_{p,r}$, where
$D_{p,r}$ denotes the union of the special divisors $D_1$ introduced in (B4) (and where the centres of blowing up
in the complement are extended as in \S\ref{subsec:MinimalHigherDim} in the case that $r>0$; 
see also Remark \ref{rem:distinguishedD}).

In view of Remark \ref{rem:forward}, the main challenge remaining for partial desingularization
preserving normal crossings in arbitrary dimension is a generalization of the moving away techniques (B4)
of this section.

\bibliographystyle{amsplain}

\end{document}